\DeclareMathOperator{\Spec}{Spec}
\DeclareMathOperator{\Hom}{Hom}
\DeclareMathOperator{\End}{End}
\DeclareMathOperator{\Ima}{Im}
\DeclareMathOperator{\Trd}{Trd}
\DeclareMathOperator{\Nrd}{Nrd}
\DeclareMathOperator{\Br}{Br}
\DeclareMathOperator{\Iso}{Iso}
\DeclareMathOperator{\Id}{Id}
\DeclareMathOperator{\Alt}{Alt}
\DeclareMathOperator{\Sym}{Sym}
\DeclareMathOperator{\rdim}{rdim}
\newcommand{\isom}{\stackrel{\sim}{\rightarrow}}
\newcommand{\Isom}{\stackrel{\sim}{\longrightarrow}}
\newcommand{\Z}{\mathbb{Z}}
\newcommand{\N}{\mathbb{N}}
\newcommand{\To}{\longrightarrow}
\newcommand{\fdiag}[1]{\langle #1\rangle}
\newcommand{\ens}[2]{\{ #1\,|\, #2\}}
\newcommand{\tld}{\widetilde}
\newcommand{\eps}{\varepsilon}
\newcommand{\pgq}{\geqslant}
\newcommand{\ppq}{\leqslant}
\newcommand{\CBrh}[1][K]{\mathbf{Br}_h(#1)}
\newcommand{\CBrhu}[1][K,\iota]{\mathbf{Br}_h(#1)}
\newcommand{\Zd}{\Z/2\Z}
\renewcommand{\phi}{\varphi}
\renewcommand{\bar}{\overline}
\renewcommand{\hat}{\widehat}
\newcommand{\foncdef}[5]{\begin{array}{rrcl}
#1 : & #2 & \To & #3 \\
 & #4 & \longmapsto & #5
\end{array}}
\newtheorem{thm}{Theorem}[section]
\newtheorem{prop}[thm]{Proposition}
\newtheorem{coro}[thm]{Corollary}
\newtheorem{lem}[thm]{Lemma}
\newtheorem{defi}[thm]{Definition}
\theoremstyle{definition}
\newtheorem{rem}[thm]{Remark}
\newtheorem{ex}[thm]{Example}
\newcommand{\GWN}[1][A,\sigma]{\hat{GW}_{\N}^{\bullet}(#1)}
\newcommand{\GWZ}[1][A,\sigma]{\hat{GW}_{\Z}^{\bullet}(#1)}
\newcommand{\GW}[1][A,\sigma]{\tld{GW}^{\bullet}(#1)}
\newcommand{\SWN}[1][A,\sigma]{\hat{SW}_{\N}^{\bullet}(#1)}
\newcommand{\SWZ}[1][A,\sigma]{\hat{SW}_{\Z}^{\bullet}(#1)}
\newcommand{\SW}[1][A,\sigma]{\tld{SW}^{\bullet}(#1)}
\newcommand{\tw}[1]{\mbox{}^\iota #1}
\DeclareMathOperator{\RAlt}{RAlt}
\DeclareMathOperator{\alt}{alt}
\DeclareMathOperator{\Sh}{Sh}
\author{Nicolas Garrel}
\title{Lambda-operations for hermitian forms over algebras with involution}
\date{}
\begin{document}

\maketitle

\section*{Introduction}

The theory of $\lambda$-rings was initiated by Grothendieck and Berthelot (\cite{Ber})
in the early days of $K$-theory, in particular in the context of the Riemann-Roch theorem,
and has grown to be a field of independent interest, as illustrated for instance in 
the monograph \cite{Yau}, though still often connected with $K$-theory (but see for 
instance \cite{Bor} for an interesting take on $\lambda$-rings as related to the
field with one element).

A $\lambda$-ring is a commutative ring $R$ endowed with operations $\lambda^d: R\to R$
for all $d\in \N$, which are usually understood as a certain flavour of "exterior power",
especially when $R$ is some $K$-theory ring, and that should satisfy some conditions which 
encapsulate the expected behaviour of exterior powers. Note that there has been a shift of
terminology over the years, and what was originally called a $\lambda$-ring is now 
usually called a pre-$\lambda$-ring (and the term $\lambda$-ring refers to what was initially
called a \emph{special} $\lambda$-ring). In this article we will restrict to studying a 
pre-$\lambda$-ring structure, and we will leave the $\lambda$-ring property to a later article.

Since $\lambda$-operations tend to be defined on $K$-theory rings, it is not surprising
that a structure of $\lambda$-ring can be defined on the Grothendieck-Witt ring $GW(K)$ 
of a field $K$ (see \cite{McGar}) since this ring is the $0$th hermitian $K$-theory ring
of $(K,\Id)$. The corresponding "exterior power" operations were introduced by Bourbaki in 
\cite{Bou}. Given a bilinear space $(V,b)$, we get a bilinear space $(\Lambda^d(V),\lambda^d(b))$:
\[ \foncdef{\lambda^d(b)}{\Lambda^d(V)\times \Lambda^d(V)}{K}
  {(u_1\wedge\cdots \wedge u_d, v_1\wedge\cdots \wedge v_d)}{\det\left(b(u_i,v_j)\right).} \]

Those operations, though very natural, make suprisingly few appearances in
the quadratic form literature; for instance, they do not even get a passing
mention in references such as \cite{Lam}, \cite{EKM}, \cite{OMea} or \cite{Szy}. This might
be in part due to the fact that they are not well-defined on the Witt ring, which
is traditionally the preferred algebraic structure for working with quadratic forms,
but rather on the Grothendieck-Witt ring $GW(K)$. The fact that the $\lambda$-structure of
$GW(K)$ was only investigated formally as recently as \cite{McGar}, compared to the 
technically much more difficult theorems on topological $K$-theory, is another illustration
of how $\lambda$-powers of bilinear forms have somehow stayed under the radar.

If one wants to extend the $\lambda$-ring structure of $K_0(A)$ to non-commutative
rings, things do not look great: $K_0(A)$ is not even a ring, as there is no
tensor product of $A$-modules. Rather, if we fix some commutative ring $R$ and an 
$R$-algebra $A$, and if $M$ and $N$ are $A$-modules on the right, then $M\otimes_R N$
is a module over $A\otimes_R A$. This means we can at least define an $\N$-graded ring
$\bigoplus_{d\in \N} K_0(A^{\otimes d})$ where the tensor product is over $R$.
But in general this ring has no reason to be commutative, let alone carry $\lambda$-operations.
Of course that can happen sometimes: for instance, if $A$ is Morita-equivalent to $R$,
then replacing $A$ by $R$ does not change the isomorphism class of the ring, and
therefore it has the necessary structure. That condition on $A$ exactly means that 
$A$ is a split Azumaya algebra over $R$. It turns out that the construction still works
for a non-split Azumaya algebra.

We are more interested here in the hermitian case, and also for simplicity we will
work over a field. Everything in this article still holds over a commutative ring
instead of a field, at the cost of enough technical details that we prefer to first
expose the constructions in the simpler setting of a base field. The non-hermitian
theory then becomes rather uninteresting as $K_0(A)\simeq \Z$ for any Azumaya algebra
$A$ over a field, but the hermitian version $GW(A,\sigma)$ for $(A,\sigma)$ an 
Azumaya algebra with involution over a field with involution $(K,\iota)$ is very rich. 
So, given such an algebra with involution, we can consider 
\[ \GWN = \bigoplus_{d\in \N} GW^\bullet(A^{\otimes d},\sigma^{\otimes d}) \]    
which is the hermitian $K$-theoretical version of the ring above. Here 
$GW^\bullet(A,\sigma) = \bigoplus_{\eps\in U(K,\iota)} GW^\eps(A,\sigma)$ where 
$U(K,\iota)=\ens{\eps\in K^\times}{\eps\iota(\eps)=1}$. This $\GWN$ is an 
$\N\times U(K,\iota)$-graded ring for any algebra with involution, and we showed in 
\cite{Moi2} that it is commutative when $A$ is Azumaya. In this article we show that 
it also has a natural pre-$\lambda$-ring structure, which is graded in the sense that 
if $x$ has degree $g$ then $\lambda^d(x)$ has degree $d\cdot g$ (Theorem \ref{thm_lambda_swn}). 
Precisely, if $(V,h)$ is an $\eps$-hermitian space over $(A^{\otimes n},\sigma^{\otimes n})$, 
we define $(\Alt^d(V),\Alt^d(h))$ (Definition \ref{def_alt_h}) as an $\eps^d$-hermitian space over 
$(A^{\otimes dn},\sigma^{\otimes dn})$ (here $\Alt$ stands for "alternating",
as we prefer to refer to those operations as "alternating powers" rather than
"exterior powers" for reasons which only truly matter in characteristic $2$).
This construction is heavily inspired by the construction of $\lambda$-powers
of involutions given in \cite[§10.D]{BOI}. This graded $\lambda$-structure is also 
shown to be compatible with hermitian Morita equivalences.

There is also a $(\Z\times U(K,\iota))$-graded version $\GWZ$, where the negative
degrees correspond to twisting the algebra using the involution. Its ring structure
is certainly interesting, but we will see that as far as the $\lambda$-structure goes
it does not bring anything more that $\GWN$ (though $\GWZ$ becomes crucial if one
wants to consider duality theorems, which we will do in an upcoming article). 
On the other hand, when the involution
is of the first kind, a much more interesting construction appears, namely a ring
$\GW$ which is graded over $\Zd\times \mu_2(K)$. In that case,
\[ \GW = GW(K)\oplus GW^{-1}(K) \oplus GW(A,\sigma) \oplus GW^{-1}(A,\sigma), \]   
and if $x\in GW^\eps(A,\sigma)$, $\lambda^d(x)$ is in $GW(K)$ when $d$ is 
even, and is in $GW^\eps(A,\sigma)$ when $d$ is odd. 

Though $\GW$ is the more interesting pre-$\lambda$-ring in applications, the
construction of alternating powers and the proof of their properties really happens
in $\GWN$, and actually more precisely in the graded semiring $\SWN$, where
the homogeneous components are isometry classes of hermitian spaces (rather than
formal differences of those). This leads us to study pre-$\lambda$-semirings
which are graded over a commutative monoid. The theory of $\lambda$-operations
over either a semiring or a graded ring has, as far as we know, never been 
formally studied, and graded (semi)rings over monoids are somewhat rare in
the litterature (usually an $\N$-graded ring is seen as a special kind of 
$\Z$-graded ring with trivial negative components). Therefore Section 
\ref{sec_graded_lambda} develops the theory of such graded pre-$\lambda$-semirings
in details. 

In Section \ref{sec_gw} we recall (without proof) the definition and relevant
properties of our various graded semi(rings), from $\SWN$ to $\GW$, referring
to \cite{Moi2} (note that there are some differences in notation between
this article and \cite{Moi2}).

Section \ref{sec_alt} is the technical heart of the article, and is dedicated to
the $\lambda$-structure of $\SWN$ (see Theorem \ref{thm_lambda_swn}), which is
then transferred to the other (semi)rings (Corollaries \ref{cor_lambda_Z} and
\ref{cor_lambda_zd}).

In Section \ref{sec_ralt} we give a more explicit description of the 
$\lambda$-operations in $\SW$, which are initially defined in Section \ref{sec_alt}
from $(\Alt^d(V),\Alt^d(h))$ through a natural Morita equivalence. 
The corresponding "reduced" altenating powers are denoted $(\RAlt^d(V),\RAlt^d(h))$,
which are symmetric bilinear spaces when $d$ is even, and hermitian spaces
when $d$ is odd. Of special interest to us is the connexion between even 
$\lambda$-powers and involution trace forms (Corollary \ref{cor_lambda_trace}).

The last short section is a discussion of the notion of determinant of an 
involution, which is defined in \cite{BOI} only when the algebra has even degree,
and for which we propose an extension to the odd degree case.
\\

We also wish to mention in this introduction our main source of motivation
for defining our $\lambda$-operations: the construction of cohomological
invariants of classical groups and algebras with involution (in the sense
of \cite{GMS}).

Indeed, the proof of Milnor's conjecture
by Voevodsky et al gives a canonical morphism $I^n(K)\to H^n(K,\mu_2)$, where 
$I^n(K)$ is the $n$th power of the fundamental ideal $I(K)\subset W(K)$ of the Witt ring
and $H^n(K,\mu_2)$ is Galois cohomology. Thus, to define a degree $n$ cohomological invariant,
it is possible to define instead an invariant with values in $I^n$. 

For instance, Rost in \cite{R99} and Garibaldi in \cite{Gar} define cohomological
invariants of Spin groups, using well-chosen combinations of $\lambda$-operations
of quadratic forms in $I^3$. In \cite{Moi}, we extend this strategy
to describe all cohomological invariants of $I^n$. To define in this manner invariants
of algebras with involutions (or, more or less equivalently, of hermitian
spaces over those algebras), we therefore need to be able to attach 
quadratic forms to those objects in a natural manner. The most common such construction 
is given by trace forms: if $(A,\sigma)$
is an algebra with involution of the first kind, we can define
the trace form $T_A:x\mapsto \Trd_A(x^2)$, the involution trace form
$T_\sigma: x\mapsto \Trd_A(x\sigma(x))$, its restriction $T_\sigma^+$ to
the subspace of $\sigma$-symmetric elements, and its restriction
$T_\sigma^-$ to the subspace of anti-symmetric elements. These forms
are related by $T_A=T_\sigma^+-T_\sigma^-$ and $T_\sigma=T_\sigma^++T_\sigma^-$,
so it is enough to know $T_\sigma^+$ and $T_\sigma^-$. They have indeed
been used to define or compute some cohomological invariants, for
instance in \cite{BMT}, \cite{AQM2} or \cite{RST}.

It turns out that the involution trace form $T_\sigma$ is nothing more
than the square $\fdiag{1}_\sigma^2$ in $\GW$, while $T_\sigma^\pm$ is 
essentially the same as $\lambda^{2}(\fdiag{1}_\sigma)$ (Corollary \ref{cor_lambda_2}).
But our construction of a $\lambda$-structure on $\GW$ gives a lot
of other new quadratic forms, namely the $\lambda^{2d}(\fdiag{1}_\sigma)$
for $d>1$ (when $2d=\deg(A)$ this is actually nothing more than the
determinant of $\sigma$). A possible strategy to define cohomological
invariants of $(A,\sigma)$ is then to consider well-chosen combinations
of those $\lambda^{2d}(\fdiag{1}_\sigma)$, so that they actually take
values in $I^n$ for some $n$. We show in an upcoming article that this
does work when the index of $A$ is $2$.

\section*{Preliminaries and conventions}

\subsubsection*{Commutative monoids}

Let $M$ be a commutative monoid. We write $M^\times$ for the subgroup of invertible
elements (even when $M$ is denoted additively). We say that a submonoid $N\subseteq M$ 
is saturated (also sometimes called "pure" or "unitary" in the literature) if whenever 
$x+y\in N$ with $x\in N$ then $y\in N$; if $M$ is actually
a group, this exactly means that $N$ is a subgroup.

We write $G(M)$ for the Grothendieck group of $M$, which we recall is generated
by formal differences of elements of $M$; there is always a monoid morphism
$M\to G(M)$ but it is only injective if $M$ satisfies the cancellation property.

We use $\mathbf{ComMon}$ for the category of commutative monoids, and 
$\mathbf{AbGp}$ for the category of abelian groups.

% \subsubsection*{Bilinear forms}

% Diagonal bilinear forms are denoted 
% $\fdiag{a_1,\dots,a_n}$, with $a_i\in K^*$, and the $n$-fold Pfister form
% $\pfis{a_1,\dots,a_n}$ is $\fdiag{1,-a_1}\otimes\cdots\otimes\fdiag{1,-a_n}$. 
% Every bilinear form is implicitly assumed to be non-degenerate. 

% For $\eps=\pm 1$, we use $SW^\eps(K)$  (as in "semi Witt") to designate the 
% monoid of isometry classes of (nondegenerate) $\eps$-symmetric
% bilinear forms over $K$, under the orthogonal sum operation; we do include the 
% zero-dimensional form. We often omit the superscript $\eps$ when $\eps=1$. 
% Recall that $SW(K)$ is a commutative semiring (for the 
% tensor product as its multiplication), as well as $SW^\pm(K) = SW(K) 
% \oplus SW^{-1}(K)$.

% Then the Grothendieck-Witt group $GW^\eps(K)$ is the Grothendieck group of $SW^\eps(K)$, 
% and the Witt group $W^\eps(K)$ is the quotient of $GW^\eps(K)$ by the hyperbolic
% forms. By construction, $GW(K)$ and $W(K)$ are commutative rings, as well as 
% $GW^\pm(K) = GW(K) \oplus GW^{-1}(K)$.

\subsubsection*{Field with involution}

We fix throughout the article a base field $k$ of characteristic not $2$, 
and an étale $k$-algebra $K$, endowed with an involutive automorphism $\iota$
with fixed points $k$. So either $k=K$
and $\iota$ is the identity, or $K$ is a quadratic étale $k$-algebra.
All algebras and modules are assumed to be finite-dimensional over $k$. 
Although it is possible that $K\simeq k\times k$, we will usually pretend that 
$K$ is always a field, and speak of $K$-vector spaces and their dimension,
for instance (you may check that all $K$-modules in this article have constant
rank so this does not cause any trouble). You may exclude the case $K\simeq k\times k$
if you do not want to think about this.

We write $U(K,\iota)=\ens{\eps\in K^\times}{\eps\iota(\eps)=1}$ for the group of unitary
elements of $(K,\iota)$. If $\iota=\Id_K$, then $U(K,\Id)=\mu_2(K)$.

\subsubsection*{Azumaya algebras with involution}

We say that $(A,\sigma)$ is an Azumaya algebra with involution over $(K,\iota)$
when $A$ is an Azumaya algebra over $K$, and $\sigma$ is an involution on $A$
whose restriction to $K$ is $\iota$. Note that in the terminology of, for instance,
\cite{First23}, this would be called an Azumaya algebra with involution over $k$, but
here we do want to fix $(K,\iota)$ and not simply $k$. 

Also, we we choose not to speak of "central simple algebras" to take into account 
that $K$ might not be a field, exactly in the case that $K\approx k\times k$. In
that case, if we fix an identification $K\simeq k\times k$, we get a canonical
isomorphism $(A,\sigma)\simeq (E\times E^{op}, \eps)$ where $E$ is
a central simple algebra over $k$ and $\eps$ is the exchange involution (see
\cite[2.14]{BOI}). All in all we are exactly in the setting of \cite{BOI}.

We write $\Trd_A:A\to K$ for the reduced
trace of $A$, and $\Nrd_A:A\to K$ for its reduced norm. Note that 
if $V$ is a right $A$-module, its reduced dimension $\rdim(V)$ is characterized 
by $\deg(A)\rdim(V)=\dim_K(V)$ (when $K\simeq k\times k$, we technically get
a reduced dimension at each of the two points in $\Spec(K)$, but we will work 
with hermitian modules, where those two dimensions coincide); if $V$ is non-zero, 
it is the degree of the Azumaya algebra $\End_A(V)$. 

Recall that $\sigma$ is of the first kind if $\iota=\Id_K$, and of the
second kind (or unitary) if $\iota$ has order $2$, and that involutions of the first
kind can be orthogonal or symplectic. In particular, if $\iota=\Id_K$ then 
$(K,\Id)$ is an algebra with orthogonal involution. 

For $\eps\in U(K,\iota)$, we define the set $\Sym^\eps(A,\sigma)\subset A$ of 
$\eps$-symmetric elements of $(A,\sigma)$, meaning that they satisfy $\sigma(a)=\eps a$, 
and $\Sym^\eps(A^\times,\sigma)$ is the subset of invertible $\eps$-symmetric elements.

\subsubsection*{Hermitian forms}

Let $(A,\sigma)$ be an Azumaya algebra with involution over $(K,\iota)$.  
If $\eps \in U(K,\iota)$, an $\eps$-hermitian
module $(V,h)$ over $(A,\sigma)$ is a \emph{right} $A$-module $V$, together
with an $\eps$-hermitian form $h:V\times V\to A$ (always assumed to be nondegenerate).
We take the convention that $\sigma(h(x,y))=\eps h(y,x)$.
We often just speak of an $\eps$-hermitian form without mentioning the underlying
module. 

If $V$ is non-zero, $h$ induces the so-called adjoint involution $\sigma_h$ on the
central simple algebra $\End_A(V)$, characterized by $h(u(x),y)=h(x,\sigma_h(u)(y))$.
We call $\eps(h)=\eps$ the sign of $h$ (in the unitary case we need not have $\eps=\pm 1$
but the terminology is still convenient).

If $a\in \Sym^\eps(A^\times,\sigma)$, the elementary diagonal $\eps$-hermitian form
$\fdiag{a}_\sigma: A\times A\to A$ is defined by $(x,y)\mapsto \sigma(x)ay$. A
diagonal form $\fdiag{a_1,\dots,a_n}_\sigma$ is then the (orthogonal) sum of the
$\fdiag{a_i}_\sigma$. When $(A,\sigma)=(K,\Id)$, we remove the subscript $\sigma$
and just write $\fdiag{a_1,\dots,a_n}$ for diagonal bilinear forms.

We define the monoid $SW^\eps(A,\sigma)$ of isometry classes of $\eps$-hermitian 
forms over $(A,\sigma)$ (the zero module is included). We then define $GW^\eps(A,\sigma)$
as the Grothendieck group of $SW^\eps(A,\sigma)$. We often omit the superscript when
$\eps=1$. Note that using tensor products over $K$, $SW(K,\iota)$ is a commutative 
semiring, and $GW(K,\iota)$ is a commutative ring.

\section{Graded pre-$\lambda$-semirings}\label{sec_graded_lambda}

The goal of this article is to define and study an appropriate structure of graded
pre-$\lambda$-ring on the various flavours of mixed Grothendieck-Witt rings of an
Azumaya algebra with involution: $\GWZ$, and $\GW$ if $\sigma$ is of the first
kind (see Section \ref{sec_gw} for the definition of those rings). But ultimately 
this comes from a similar structure on 
$\SWN$, which is only a semiring, graded over a monoid. We wish to insist on the fact
that this is the suitable framework to build the theory, and showcase there is no need 
for a grading over a group (this would change if we wished to include duality results).

We do not assume that the reader is familiar with $\lambda$-rings or with rings
graded over monoids, and we try to give a self-contained account of what is needed
for the article. We take \cite{Yau} as our main reference for the classical theory of 
(ungraded) $\lambda$-rings (though we also sometimes refer to \cite{Zib}). We make all 
the necessary adjustments to take the gradings into account (working with semirings
instead of rings poses no problem whatsoever), and refer directly to the proofs
in \cite{Yau} when they are completely straightforward to adapt to our context.

\subsection{Graded semirings}

If $M$ is a commutative monoid (which we usually denote additively), an $M$-graded commutative 
monoid $A$ is a commutative monoid endowed with a decomposition $A = \bigoplus_{g\in M} A_g$. 
Any ungraded commutative monoid $A$ can be seen as a trivially $M$-graded monoid, by
setting $A_0=A$ and $A_g=0$ if $g\neq 0$. In particular, ungraded monoids are essentially 
the same thing as monoids graded over the trivial monoid.
The elements of each $A_g$ are called homogeneous, and the set of homogeneous elements
is denoted $|A|$. The degree map $\partial: |A|\to M\cup \{\infty\}$ sends $a\in A_g\setminus \{0\}$
to its degree $g\in M$, and $\partial(0)=\infty$ (where $\infty$ is a formal element).
A subset of $A$ is said to be homogeneous if it contains the homogeneous components 
(ie the component in each $A_g$) of all its elements. 

If $A$ and $B$ are $M$-graded, then a graded morphism $f: A\to B$ is a 
monoid morphism such that $f(A_g)\subseteq B_g$ for all $g\in M$. Given a 
function $\phi: M\to N$, if $A$ is $M$-graded and $B$ is $N$-graded, 
we define the pushforward $\phi_*(A)$ as the $N$-graded monoid given
by $\phi_*(A)_h = \bigoplus_{\phi(g)=h}A_g$ for $h\in N$, and the pullback 
$\phi^*(B)$ as the $M$-graded monoid given by $\phi^*(B)_g=B_{\phi(g)}$
for $g\in M$. Note that as ungraded monoids, $\phi_*(A)=A$, so in
particular if $\phi: M\to \{0\}$ is the trivial morphism, then $\phi_*(A)$ is just
$A$ seen as a trivially graded ring. A lax graded morphism $A\to B$
is the data of some function $\phi: M\to N$, and a graded morphism $f: \phi_*(A)\to B$,
which is the same as a graded morphism $A\to \phi^*(B)$.
We also say that $f$ is a $\phi$-graded morphism. Let us write $\mathbf{ComMon}_M$
for the category of $M$-graded commutative monoids with graded morphisms.

Recall that a semiring is the same as a ring except that its underlying additive
structure is only that of a commutative monoid, not necessarily a group.
An $M$-graded semiring is a semiring $R$ which is $M$-graded as an additive monoid,
such that  $1\in R_0$ (the neutral component), and $R_g\cdot R_h\subseteq R_{g+h}$
for any $g,h\in M$. All graded semirings in this article will be commutative.
A (lax) graded semiring morphism is a (lax) graded morphism which is also
a semiring morphism (for a lax morphism of semirings, we require that the 
function $\phi:M\to N$ be a monoid morphism). Note that any ungraded semiring is 
naturally a graded semiring for the trivial grading. We write $\mathbf{SRing}_M$
(resp. $\mathbf{Ring}_M$) for the category of $M$-graded semirings (resp. rings).

The subset $|R|\subset R$ is actually a multiplicative submonoid, and
$\partial: |R|\to M \cup \{\infty\}$ is a monoid morphism (where $m+\infty =\infty$ for
all $m\in M$). An element $x\in |R|$ is called graded-invertible
if for any $g\in M$, multiplication by $x$ induces an additive isomorphism from
$R_g$ to $R_{g+\partial(x)}$. The set of graded-invertible elements is denoted by
$R^\times$ (which agrees with the usual notation if $R$ is ungraded), and it is
a saturated submonoid of $|R|$. A homogeneous element $x\in |R|$ is invertible
if and only if it is graded-invertible and $\partial(x)$ is invertible in $M$;
in particular, if $M$ is a group, then $R^\times = |R|^\times$ (the group
of invertible elements of the monoid $|R|$). On the other hand, in general
an invertible element of $R$ need not be homogeneous, and therefore an invertible
element is not always graded-invertible.

If $R$ is an $M$-graded commutative semiring and $N$ is a commutative
monoid, then the monoid semiring $R[N]$ is a commutative 
$(M\times N)$-graded semiring. In particular, if $R$ is ungraded, $R[M]$ is $M$-graded.
Note that $|R[N]|\simeq |R|\times N$ as monoids, and 
$(R[N])^\times \simeq R^\times \times N$. If $S$ is a commutative 
$(M\times N)$-graded $R$-semialgebra, and $S^\times \xrightarrow{\partial} M\times N\to N$
is surjective, then any set-theoretic section $N\to S^\times$ defines an 
isomorphism of additive $(M\times N)$-graded monoids $S\approx R[N]$, but
that only defines an isomorphism of graded semirings if we can find a section
as a monoid morphism.

An augmentation on a commutative $M$-graded semiring $R$ is a graded morphism
$\tld{\delta}_R: R\to \Z[M]$, and a morphism of augmented graded semirings is a 
graded morphism which preserves the augmentation. We also define the total
augmentation $\delta_R:R\to \Z$, which is an ungraded semiring morphism, as the composition 
of $\tld{\delta}_R$ and the canonical ring morphism $\Z[M]\to \Z$ sending every
element of $M$ to $1$.

\begin{ex}\label{ex_sw_graded}
  Our recurring example in this section will be 
  \[ SW^\bullet(K,\iota) = \bigoplus_{\eps\in U(K,\iota)} SW^\eps(K,\iota) \]
  which is obviously an $U(K,\iota)$-graded additive monoid. It is actually
  an $U(K,\iota)$-graded semiring, using the tensor product over $K$ as the
  multiplication. Indeed, if $(V_i,h_i)$ are $\eps_i$-hermitian modules over
  $(K,\iota)$ for $i\in \{1,2\}$, then $(V_1\otimes_K V_2, h_1\otimes h_2)$
  is an $\eps_1\eps_2$-hermitian module over $(K,\iota)$.

  If $\iota= \Id_K$, $SW^\bullet(K,\Id)$ is $\mu_2(K)$-graded, with two very
  different components: $SW(K,\Id)$ is the semiring of symmetric bilinear forms
  over $K$, while $SW^{-1}(K,\Id)$ is rather uninteresting as antisymmetric forms
  are classified by their (even) dimension. An element of $SW(K,\Id)$ has the form
  $\fdiag{a_1,\dots,a_n}$ with $a_i\in K^*$, and the product is determined by 
  $\fdiag{a}\cdot \fdiag{b}=\fdiag{ab}$. An element of $SW^{-1}(K,\Id)$ has the
  form $r\cdot \mathcal{H}_{-1}$ where $r\in \N$ and $\mathcal{H}_{-1}$ is the
  antisymmetric hyperbolic plane; we have $\fdiag{a}\cdot \mathcal{H}_{-1}=\mathcal{H}_{-1}$
  and $\mathcal{H}_{-1}\cdot \mathcal{H}_{-1}=2\mathcal{H}_{1}$ where $\mathcal{H}_1$
  is the \emph{symmetric} hyperbolic plane. The graded-invertible elements are the
  elementary forms $\fdiag{a}\in SW(K,\Id)$.

  If $\iota\neq \Id_K$, then any element of $SW^\eps(K,\iota)$ has the form 
  $\fdiag{a_1,\dots,a_n}_\iota$ with $a_i\in K^\times$ such that $\iota(a)=\eps a$.
  We can then write any element of $SW^\bullet(K,\iota)$ as $\fdiag{a_1,\dots,a_n}_\iota$
  with $a_i\in K^\times$, with the understanding that each elementary form 
  $\fdiag{a_i}_\iota$ is in the component of degree $\eps_i = \frac{\iota(a_i)}{a_i}
  \in U(K,\iota)$. The $\fdiag{a_i}_\iota$ are precisely the graded-invertible elements,
  with $\fdiag{a}_\iota\cdot \fdiag{b}_\iota = \fdiag{ab}_\iota$,
  and the Hilbert 90 theorem guarantees that there is a graded-invertible element
  $\fdiag{a_\eps}_\iota$ for each
  $\eps\in U(K,\iota)$, so each $SW^\eps(K,\iota)$ is isomorphic as an additive
  monoid to $SW(K,\iota)$, but non-canonically (the choice of $a_\eps$ is modulo $k^\times$,
  but $\fdiag{a}_\iota=\fdiag{b}_\iota$ only if $a\equiv b$ modulo $N_{K/k}(K^\times)$). And 
  in fact the morphism $\partial: (SW^\bullet(K,\iota))^\times\to U(K,\iota)$ can be rewritten
  as $K^\times/N_{K/k}(K^\times)\to K^\times/k^\times$, which does not split in general, 
  so $SW^\bullet(K,\iota)$ and $SW(K,\iota)[U(K,\iota)]$ are not isomorphic as graded
  semirings (not even non-canonically). Despite this, it is true that $SW^\bullet(K,\iota)$ does not carry 
  much more information than $SW(K,\iota)$, which explains why it is rarely defined,
  but it is convenient for us to have a unified treatment of the unitary and non-unitary
  case.

  In the split case $K\simeq k\times k$, the situation is simpler and $SW^\bullet(K,\iota)$
  is canonically isomorphic to $\N[k^\times]$, with $SW(K,\iota)\simeq \N$ and $U(K,\iota)\simeq k^\times$.
\end{ex}

\subsection{Pre-$\lambda$-semirings}

\begin{defi}\label{def_semi_lambda}
  Let $M$ be a commutative monoid. An $M$-graded pre-$\lambda$-semiring is an $M$-graded
  commutative semiring $R$ endowed with functions $\lambda^d:R\to R$ for all $d\in \N$
  such that: 
  \begin{itemize}
    \item for all $g\in M$ and $d\in \N$, $\lambda^d(R_g)\subseteq R_{dg}$;
    \item for all $x\in R$, $\lambda^0(x)=1$ and $\lambda^1(x)=x$;
    \item for all $g\in M$, $x,y\in R_g$ and $d\in \N$, $\lambda^d(x+y)=\displaystyle\sum_{p+q=d}\lambda^p(x)\lambda^q(y)$.
    \end{itemize}
\end{defi}

\begin{rem}\label{rem_def_semi_lambda}
  It is easy to see that one may simply define functions $\lambda^d:R_g\to R_{dg}$
  for all $d\in \N$ and $g\in M$ satisfying the axioms on the homogeneous components, 
  as they extend uniquely to functions $\lambda^d:R\to R$ satisfying the definition.
  The image of a general element is computed from the images of its homogeneous components
  using the axiom for sums.
\end{rem}

\begin{ex}\label{ex_sw_lambda}
  If $(V,h)$ is an $\eps$-hermitian space over $(K,\iota)$, then there is a natural
  $\eps^d$-hermitian form $\lambda^d(h)$ on $\Lambda^d(V)$, given by
  \[ \lambda^d(b)(u_1\wedge\cdots\wedge u_d, v_1\wedge\cdots\wedge v_d) = \det(h(u_i,v_j)). \]
  This defines an $U(K,\iota)$-graded pre-$\lambda$-semiring structure on $SW^\bullet(K,\iota)$.
\end{ex}

Of course, a (lax) morphism of graded pre-$\lambda$-semirings, which we call a graded $\lambda$-morphism,
is a (lax) graded semiring morphism which commutes with the operations $\lambda^d$. This defines
a category $\lambda-\mathbf{SRing}_M$ of $M$-graded pre-$\lambda$-semirings, 
and also of lax graded pre-$\lambda$-semirings.
\\

We now give a statement which justifies the idea that all the definitions 
introduced so far really happen at the level of an $\N$-grading. 

\begin{prop}\label{prop_check_n}
  Let $R$ be a commutative $M$-graded semiring, and let $\lambda^d:R\to R$ be 
  functions for all $d\in \N$. If for any monoid morphism $\phi: \N\to M$ the $\lambda^d$
  induce an $\N$-graded pre-$\lambda$-ring structure on $\phi^*(R)$, then $R$ is a graded
  pre-$\lambda$-ring. 

  Let $R$ and $S$ be $M$-graded pre-$\lambda$-semirings and $f: R\to S$ be a semiring
  morphism. If for any morphism $\phi: \N\to M$ the function $f$ induces an $\N$-graded 
  $\lambda$-morphism $\phi^*(R)\to \phi^*(S)$, then $f$ is a $\lambda$-morphism.
\end{prop}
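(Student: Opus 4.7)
The approach is to exploit the fact, recorded in Remark \ref{rem_def_semi_lambda}, that a graded pre-$\lambda$-semiring structure can be verified on each homogeneous component separately. For each element $g \in M$, I would introduce the monoid morphism $\phi_g: \N \to M$ defined by $\phi_g(n) = ng$; the associated pullback $\phi_g^*(R)$ is then $\N$-graded with $\phi_g^*(R)_n = R_{ng}$, so in particular $\phi_g^*(R)_1 = R_g$ and $\phi_g^*(R)_d = R_{dg}$ for all $d \in \N$.

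For the first statement, applying the hypothesis to $\phi_g$ yields that the $\lambda^d$ satisfy the $\N$-graded pre-$\lambda$-ring axioms on $\phi_g^*(R)$. Restricting these to degree $1$, that is to $R_g$, gives immediately the grading condition $\lambda^d(R_g) \subseteq R_{dg}$, the initial conditions $\lambda^0(x)=1$ and $\lambda^1(x)=x$ for $x \in R_g$, and the sum formula $\lambda^d(x+y) = \sum_{p+q=d} \lambda^p(x)\lambda^q(y)$ for $x,y \in R_g$. Since $g$ is arbitrary, all three axioms of Definition \ref{def_semi_lambda} hold on every homogeneous component of $R$, and Remark \ref{rem_def_semi_lambda} then provides the conclusion that $R$ is an $M$-graded pre-$\lambda$-semiring.

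For the morphism statement, the same choice of $\phi_g$ works: the hypothesis that $f$ induces a $\lambda$-morphism $\phi_g^*(R) \to \phi_g^*(S)$ gives $f(\lambda^d(x)) = \lambda^d(f(x))$ for every $x \in \phi_g^*(R)$, in particular for $x \in R_g$. As $g$ ranges over $M$, this identity holds for every homogeneous $x$, and since $\lambda^d$ on a general element is determined by its values on homogeneous components via the sum formula, together with the additivity and multiplicativity of $f$, the identity propagates to all of $R$. There is essentially no obstacle in this proof; the only mildly nontrivial point is recognising that the family of morphisms $\{\phi_g\}_{g \in M}$ exhausts the information needed, so that the universally quantified hypothesis over all $\phi: \N \to M$ can be collapsed to data attached to each homogeneous piece $R_g$.
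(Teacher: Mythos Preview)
Your proof is correct and follows essentially the same approach as the paper: both introduce the morphisms $\phi_g:\N\to M$ sending $1\mapsto g$, verify each axiom of Definition~\ref{def_semi_lambda} on $R_g$ by reading it off from the degree-$1$ part of $\phi_g^*(R)$, and handle the $\lambda$-morphism statement by the same reduction to homogeneous components.
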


\begin{proof}
  For any $g\in M$, we write $\phi_g:\N\to M$ the unique morphism with $\phi_g(1)=g$.
  The fact that $\lambda^d(R_g)\subset R_{dg}$ can be checked in $\phi_g^*(R)$. The
  fact that $\lambda^0=1$ and $\lambda^1=\Id$ can be checked on each $R_g$, and therefore
  on each $\phi_g^*(R)$. Given $x,y\in R_g$, the formula for $\lambda^d(x+y)$
  can be checked in $\phi_g^*(R)$.

  Likewise, the fact that $f$ is a $\lambda$-morphism can be checked on each $R_g\to S_g$,
  and therefore for each $\phi_g^*(R)\to \phi_g^*(S)$.
\end{proof}

\begin{defi}
  Let $R$ be an $M$-graded pre-$\lambda$-semiring, and let $x\in R$. We define the
  $\lambda$-dimension $\dim_\lambda(x)$ of $x$ as the supremum in $\N\cup\{\infty\}$ 
  of all $n\in \N$ such that $\lambda^n(x)\neq 0$. The subset of elements with
  finite $\lambda$-dimension is denoted $R^{f.d.}$.
\end{defi}

We usually just say "dimension" for the $\lambda$-dimension when there is no risk
of confusion. Note that only $0\in R$ has dimension $0$, that 
$\dim_\lambda(x+y)\ppq \dim_\lambda(x)+\dim_\lambda(y)$, and that if $f$ is
a graded $\lambda$-morphism, $\dim_\lambda(f(x))\ppq \dim_\lambda(x)$.
\\

It can be useful to rephrase the definition of a graded pre-$\lambda$-semiring in
a more abstract and compact way. For any commutative $M$-graded semiring $R$, consider the
commutative multiplicative monoid
\begin{equation}
  \Lambda(R) = 1 + tR[[t]] \subset R[[t]]
\end{equation}
where $R[[t]]$ is of course the semiring of formal series over $R$, and define for
any $g\in M$ the submonoid
\begin{equation}
  \Lambda(R)_g = \left\{ \sum a_dt^d\in \Lambda(R) \, | \,  \forall n\in \N,\, a_d\in R_{dg}\right\}.
\end{equation}
Then we get an $M$-graded monoid
\begin{equation}
  \Lambda_M(R) = \bigoplus_{g\in M} \Lambda(R)_g.
\end{equation}

There is a natural graded monoid morphism $\eta_R: \Lambda_M(R)\to R$ (where $R$ is seen
as an additive monoid) which sends a series $\sum a_dt^d$ to $a_1$. Defining functions
$\lambda^d:R_g\to R_{dg}$ for all $g\in M$ and $d\in \N^*$ is the same as defining
a single homogeneous function $\lambda_t: R\to \Lambda_M(R)$, using 
$\lambda_t(x)=1+\sum_{d>0}\lambda^d(x)t^d$, and from the definition
of the monoid structure on $\Lambda(R)$ one can easily check that the $\lambda^d$
define a graded pre-$\lambda$-semiring structure if and only if $\lambda_t$ is an
additive morphism which is a section of $\eta_R$.

If $f: S\to R$ is any $M$-graded semiring morphism, then it induces a commutative
diagram
\[ \begin{tikzcd}
    \Lambda_M(S) \rar{\eta_S} \dar{f_*} & S \dar{f} \\
    \Lambda_M(R) \rar{\eta_R} & R
  \end{tikzcd} \]
and when $R$ and $S$ are graded pre-$\lambda$-semirings, then $f$ is
a $\lambda$-morphism if and only if the following natural diagram
commutes:
\[ \begin{tikzcd}
    S \rar{\lambda_t} \dar{f} & \Lambda_M(S) \dar{f_*} \\
    R \rar{\lambda_t} & \Lambda_M(R).
    \end{tikzcd} \]

An element $x\in R$ is finite-dimensional exactly when $\lambda_t(x)\in R[[t]]$ is
a polynomial, and its dimension is then the degree of this polynomial.

\begin{rem}
  If $\phi: M\to N$ is a monoid morphism, there is a canonical morphism
  $\Lambda_M(R)\to \Lambda_N(\phi_*(R))$, which is compatible with the construction
  of $\eta_R$. This means that a structure of $M$-graded pre-$\lambda$-semiring
  on $R$ canonically induces a structure of $N$-graded pre-$\lambda$-semiring on
  $\phi_*(R)$. In particular, taking $\phi$ to be the trivial morphism $\phi: M\to \{0\}$,
  it induces a structure of (ungraded) pre-$\lambda$-semiring on $R$ (as $\Lambda_{\{0\}}(\phi_*(R))$
  is just $\Lambda(R)$).

\end{rem}

\subsection{Augmentation}

The following proposition is immediate from the pre-$\lambda$-semiring axioms:

\begin{prop}\label{prop_lambda_graded_group_ring}
  If $R$ is an $M$-graded pre-$\lambda$-semiring and $N$ is a commutative monoid, then
  $R[N]$ has a canonical $(M\times N)$-graded pre-$\lambda$-semiring
  structure given by $\lambda^d(x\cdot h)=\lambda^d(x)\cdot (dh)$ for all
  $d\in \N$, $x\in R$ and $h\in N$.

  Moreover, the canonical semiring morphism $R[N]\to R$ is a $\lambda$-morphism.
\end{prop}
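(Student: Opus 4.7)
The plan is to invoke Remark \ref{rem_def_semi_lambda}: it suffices to define $\lambda^d$ coherently on each homogeneous component of $R[N]$, since the extension to all of $R[N]$ is then forced by the sum formula. The homogeneous component of $R[N]$ of degree $(g,h)\in M\times N$ is $R_g\cdot h$, so the proposed formula $\lambda^d(x\cdot h) = \lambda^d(x)\cdot(dh)$ is unambiguously defined on homogeneous elements, and lands in $R_{dg}\cdot(dh)$, which is precisely the component of degree $d\cdot(g,h)$; this gives the grading axiom for free.

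Next I would check the other two axioms on homogeneous elements. The identities $\lambda^0(x\cdot h) = \lambda^0(x)\cdot 0_N = 1$ and $\lambda^1(x\cdot h) = x\cdot h$ are direct consequences of the corresponding identities in $R$. For additivity, fixing $x,y\in R_g$ and $h\in N$, both sides of the formula reduce by the definition of $\lambda^d$ to the identity
\[ \lambda^d(x+y)\cdot(dh) = \sum_{p+q=d}\lambda^p(x)\lambda^q(y)\cdot(ph+qh), \]
which holds because $ph+qh = dh$ in $N$ whenever $p+q=d$, combined with the additivity axiom in $R$. This is the only place where the monoid structure of $N$ really intervenes, and the match $p+q=d$ makes it automatic.

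For the second claim, the canonical semiring morphism $R[N]\to R$ is the one induced by the trivial monoid morphism $N\to\{0\}$, sending each basis element of $N$ to $1$ and therefore $x\cdot h$ to $x$. Applying $\lambda^d$ either before or after this map yields $\lambda^d(x)$, so it commutes with $\lambda^d$ on homogeneous elements, and thus on all of $R[N]$ by additivity.

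I do not anticipate any serious obstacle here: once the formula is written down on homogeneous elements, every axiom reduces to the corresponding axiom in $R$ together with the elementary identity $ph+qh=dh$ in $N$. The only subtlety worth flagging is the need to invoke Remark \ref{rem_def_semi_lambda} so as to avoid having to verify consistency across different representations of an arbitrary (non-homogeneous) element of $R[N]$.
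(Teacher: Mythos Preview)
Your proposal is correct and is precisely the routine verification the paper has in mind when it declares the proposition ``immediate from the pre-$\lambda$-semiring axioms'' without giving a proof. The only minor comment is that the canonical map $R[N]\to R$ is more naturally described as the lax morphism along the projection $M\times N\to M$ rather than via $N\to\{0\}$, but this is purely cosmetic.
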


\begin{ex}
  There is a canonical pre-$\lambda$-ring structure on $\Z$, given by
  $\lambda^d(n)=\binom{n}{d}$, which then induces a canonical $M$-graded
  pre-$\lambda$-ring structure on the monoid ring $\Z[M]$.
\end{ex}

\begin{defi}
  An augmentation of an $M$-graded pre-$\lambda$-semiring $R$ is an augmentation
  $\tld{\delta_R}:R\to \Z[M]$ which is a $\lambda$-morphism. The total
  augmentation map $\delta_R:R\to \Z$ is then an ungraded $\lambda$-morphism.
\end{defi}

We get a category $\lambda-\mathbf{SRing}_M^+$ (resp. $\lambda-\mathbf{Ring}_M^+$)
of augmented $M$-graded pre-$\lambda$-semirings (resp. rings).

\begin{ex}
  The graded dimension map $SW^\bullet(K,\iota)\to \Z[U(K,\iota)]$, which sends the 
  isometry class of $(V,h)$ in $SW^\eps(K,\iota)$ to $\dim(V)\cdot \eps$, is an 
  augmentation on the graded pre-$\lambda$-semiring $SW^\bullet(K,\iota)$.
\end{ex}

In general, we want to give the augmentation map the interpretation of a "graded
dimension", but it should not be confused with the $\lambda$-dimension.
Since $\binom{n}{d}\neq 0$ for all $d\in \N$ when $n<0$,
an element $x\in R^{f.d.}$ must satisfy $\delta_R(x)\pgq 0$. Also, since the 
$\lambda$-dimension of $n\in \N$ is just $n$, and a $\lambda$-morphism
lowers the $\lambda$-dimension, we see that we must have $0\ppq \delta_R(x)\ppq \dim_\lambda(x)$.

\subsection{Positive structure}

In practice, a lot of (pre)-$\lambda$-rings, such as the $K_0$ ring of a commutative 
ring or a topological space, are defined as Grothendieck rings of semirings (of modules
or vector bundles in those examples), which means that general elements are formal 
differences of "concrete" elements which enjoy a better behaviour.

First we take a look at Grothendieck groups and Grothendieck rings. Our first observation
is that the Grothendieck ring of a semiring is "the same thing as" the Grothendieck group
of its underlying additive monoid. More precisely, if $R$ is a commutative semiring, 
and $G(R)$ is its additive Grothendieck group, there is a unique ring structure on $G(R)$
such that the canonical map $R\to G(R)$ is a semiring morphism, and this uniquely defined
ring $G(R)$ satisfies the expected universal property that any (commutative) semiring
morphism $R\to S$ where $S$ is actually a ring extends uniquely to a ring morphism 
$G(R)\to S$, and on the level of additive monoids, this is the map given by the universal
property of the Grothendieck group.

Let us consider the following commutative diagram of categories and functors
(which as always in those situations only commutes up to a natural isomorphism):
\begin{equation}\label{eq_diagram_groth}
  \begin{tikzcd}
    \mathbf{SRing} \rar & \mathbf{ComMon} \\
    \mathbf{Ring} \rar \arrow[hook]{u} & \mathbf{AbGp} \arrow[hook]{u}
  \end{tikzcd}
\end{equation}
The vertical maps are inclusion of subcategories, and the horizontal
ones are forgetful functors to the additive structure. The vertical inclusions
are actually reflexive, with reflectors the Grothendieck group/ring construction.
Then what we discussed above can be formalized in this more general setting: let
\[ \begin{tikzcd}
  \mathbf{C} \rar{U} & \mathbf{D} \\
  \mathbf{A} \rar{V} \arrow[hook]{u}{I} & \mathbf{B} \arrow[hook]{u}{J}
\end{tikzcd} \]
be a commutative square of functors. We say that it has the reflector extension property
if $U$ and $V$ are faithful, $I$ and $J$ are inclusion of reflexive subcategories,
with respective reflectors $F: \mathbf{C}\to \mathbf{A}$ and $G: \mathbf{D}\to \mathbf{B}$,
and $V\circ F = G\circ U$.

In that situation, given $X\in \mathbf{C}$, let $Y\in \mathbf{A}$ be such that 
$V(Y)=G(U(X))$. We have a canonical arrow $U(X)\to J(G(U(X)))$ given by the adjunction
$G\dashv J$, which defines in turn $U(X)\to J(V(Y))=U(I(Y))$. If this morphism comes
from a (necessarily unique) morphism $X\to I(Y)$, then the associated morphism 
$F(X)\to Y$ is an isomorphism. Applying this to diagram (\ref{eq_diagram_groth})
is exactly what we explained above regarding Grothendieck groups/rings.

It is equally easy to see that if $M$ is a commutative monoid,
the inner squares (and thus the outer square too) of
\[ \begin{tikzcd}
  \mathbf{SRing}_M \rar & \mathbf{ComMon}_M \rar & \mathbf{ComMon} \\
  \mathbf{Ring}_M \rar \arrow[hook]{u} 
     & \mathbf{AbGp}_M \rar \arrow[hook]{u} & \mathbf{AbGp} \arrow[hook]{u}
\end{tikzcd} \]
have the reflector extension property. This means that if $R$ is an $M$-graded commutative semiring,
its $M$-graded Grothendieck ring is just $G(R)=\bigoplus_{g\in M} G(R_g)$ with
a unique compatible structure.

\begin{prop}\label{prop_lambda_groth}
  The squares in the following diagram have the reflector extension property:
  \[ \begin{tikzcd}
    \lambda\text{-}\mathbf{SRing}_M^+ \rar & \lambda\text{-}\mathbf{SRing}_M \rar 
      & \mathbf{SRing}_M \\
      \lambda\text{-}\mathbf{Ring}_M^+ \rar \arrow[hook]{u} 
       & \lambda\text{-}\mathbf{Ring}_M \rar \arrow[hook]{u} & \mathbf{Ring}_M \arrow[hook]{u}.
  \end{tikzcd} \]
\end{prop}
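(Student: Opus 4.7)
The plan is to lean on the characterization of pre-$\lambda$-structures via the map $\lambda_t : R \to \Lambda_M(R)$ discussed just above. For a commutative $M$-graded semiring $R$, the Grothendieck ring $G(R) = \bigoplus_{g \in M} G(R_g)$ already exists as an $M$-graded ring by the reflector extension property of the rightmost square (the non-$\lambda$ case), which the excerpt takes for granted. The crucial observation is that when $R$ is actually a ring, $\Lambda_M(R)$ is an $M$-graded abelian group under its multiplicative operation: each $1 + \sum_{d\geq 1} a_d t^d$ with $a_d\in R_{dg}$ has a formal power series inverse whose $d$-th coefficient is a weighted-homogeneous polynomial in $a_1, \dots, a_d$ of total weight $d$, and therefore lies in $R_{dg}$.

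Building the reflector $F : \lambda\text{-}\mathbf{SRing}_M \to \lambda\text{-}\mathbf{Ring}_M$ then goes as follows. Given $R$ a graded pre-$\lambda$-semiring, compose $\lambda_t : R \to \Lambda_M(R)$ with the functorial map $\Lambda_M(R) \to \Lambda_M(G(R))$ induced by $R \to G(R)$. The target is a graded abelian group, so the universal property of the Grothendieck group extends this uniquely to a graded additive morphism $\tld{\lambda}_t : G(R) \to \Lambda_M(G(R))$. The image automatically lies in $1 + tG(R)[[t]]$, so $\lambda^0 = 1$; and since $\eta_{G(R)} \circ \tld{\lambda}_t$ and $\Id_{G(R)}$ are graded additive morphisms $G(R) \to G(R)$ agreeing on the image of $R$, the uniqueness part of the universal property forces $\lambda^1 = \Id$. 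This equips $G(R)$ with a canonical pre-$\lambda$-ring structure, and makes $R \to G(R)$ a $\lambda$-morphism by construction.

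For the universal property of $F$, let $S \in \lambda\text{-}\mathbf{Ring}_M$ and $f : R \to S$ be a $\lambda$-morphism. The underlying semiring morphism extends uniquely to a graded ring morphism $\tld{f} : G(R) \to S$. To check $\tld{f}$ is a $\lambda$-morphism, one compares the two graded additive morphisms $\lambda_t^S \circ \tld{f}$ and $\tld{f}_* \circ \tld{\lambda}_t^{G(R)}$ from $G(R)$ to $\Lambda_M(S)$: they agree on $R$ because $f$ is a $\lambda$-morphism, hence they coincide by uniqueness. The equation $V \circ F = G \circ U$ holds by construction, so the right-hand square has the reflector extension property. For the left-hand square, an augmentation is itself a $\lambda$-morphism to $\Z[M]$, and $\Z[M]$ is a pre-$\lambda$-ring; the case just proved then extends $\tld{\delta}_R : R \to \Z[M]$ uniquely to a $\lambda$-morphism $G(R) \to \Z[M]$, giving a canonical augmentation on $G(R)$, and the same universal property argument handles morphisms of augmented pre-$\lambda$-semirings into augmented pre-$\lambda$-rings.

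The main obstacle is the verification—elementary but requiring care—that $\Lambda_M(G(R))$ is a graded abelian group, i.e.\ that taking formal inverses preserves the grading. Everything else is a mechanical application of the universal property of $G(-)$, recast twice: once to build the $\lambda$-structure and once to check that it is the universal one.
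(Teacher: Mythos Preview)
Your proposal is correct and follows essentially the same approach as the paper: both hinge on the observation that $\Lambda_M(G(R))$ is a graded abelian group when $G(R)$ is a ring, and then apply the universal property of the Grothendieck group to extend $\lambda_t$ and to verify the universal property of the reflector. You are slightly more explicit than the paper in checking that the extended $\tld{\lambda}_t$ is a section of $\eta_{G(R)}$ and in justifying why formal inverses preserve the grading, but the structure and key ideas are the same.
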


\begin{proof}
  Let $S$ be an $M$-graded pre-$\lambda$-semiring, and let $G(S)$ be its $M$-graded
  Grothendieck ring. We need to show that $G(S)$ has a unique structure of 
  $M$-graded pre-$\lambda$-ring such that the canonical morphism $S\to G(S)$ is
  a $\lambda$-morphism, and that this pre-$\lambda$-ring satisfies the universal property.

  Interpreting the $\lambda$-structure as a monoid morphism $\lambda_t: S\to \Lambda_M(S)$,
  and observing that $\Lambda_M(G(S))$ is actually a group since $G(S)$ is a ring, the
  universal property of Grothendieck groups tells us that there is a unique
  $\lambda_t: G(S)\to \Lambda_M(G(S))$ such that the natural diagram
  \[ \begin{tikzcd}
    S \rar{\lambda_t} \dar & \Lambda_M(S) \dar  \\
    G(S) \rar{\lambda_t} & \Lambda_M(G(S))
  \end{tikzcd} \]
  commutes.

  If $R$ is an $M$-graded pre-$\lambda$-ring and $S\to R$ is a $\lambda$-morphism,
  then we need to check whether the diagram of abelian groups
  \[ \begin{tikzcd}
    G(S) \rar{\lambda_t} \dar & \Lambda_M(G(S)) \dar  \\
    R \rar{\lambda_t} & \Lambda_M(R)
  \end{tikzcd} \]
  commutes. But since both compositions $G(S)\to \Lambda_M(R)$ extend
  $S\to \Lambda_M(R)$, this is true by universal property.

  The case where $S$ is augmented is proved similarly, as the augmentation is
  just the data of a $\lambda$-morphism $S\to \Z[M]$.
\end{proof}

\begin{ex}\label{ex_gw_lambda}
  We define $GW^\bullet(K,\iota)= G(SW^\bullet(K,\iota))$ as augmented 
  $U(K,\iota)$-graded pre-$\lambda$-rings. The component of degree $\eps\in U(K,\iota)$
  is simply $GW^\eps(K,\iota)=G(SW^\eps(K,\iota))$.
\end{ex}

As we mentioned earlier, when $R$ is a pre-$\lambda$-ring generated additively
by some semiring $S\subseteq R$, we would like $S$ to enjoy good properties, 
that will somewhat extend to $R$. We adapt the treatment in \cite{Zib} to formalize 
those properties:

\begin{defi}\label{def_rigid}
  Let $S$ be an augmented $M$-graded pre-$\lambda$-semiring. We say that $S$
  is \emph{rigid} if it is cancellative as an additive monoid, and:
  \begin{enumerate}
  \item if $x\in |S|$ satisfies $\delta_S(x)=0$, then $x=0$;
  \item every $1$-dimensional homogeneous element is graded-invertible.
  \end{enumerate}
  If $S$ is rigid, we write $\ell(S)$ for the set of $1$-dimensional 
  homogeneous elements, and we call those \emph{line elements}.
\end{defi}

\begin{ex}
  Our usual example $SW^\bullet(K,\iota)$ is rigid, and actually we even get
  $\ell(SW^\bullet(K,\iota))=(SW^\bullet(K,\iota))^\times$. Indeed, we have
  stated in Example \ref{ex_sw_graded} that the graded-invertible elements
  are the $1$-dimensional forms, which are the line elements. The other condition
  is clear: a $0$-dimensional form is just the zero element.
\end{ex}

\begin{lem}\label{lem_dim_rigid}
  Let $S$ be a rigid augmented $M$-graded pre-$\lambda$-semiring. Then
  for any $x\in |S|$, we have $\delta_S(x)=\dim_\lambda(x)$.
\end{lem}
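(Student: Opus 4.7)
The plan is to read off $\dim_\lambda(x)$ directly from $\delta_S(x)$ by exploiting that the augmentation is itself a $\lambda$-morphism, together with rigidity condition (1). First I would fix $x \in |S|$ of degree $g \in M$; then each $\lambda^d(x)$ lies in $S_{dg}$, hence is again homogeneous. Since $\delta_S : S \to \Z$ is a $\lambda$-morphism into the canonical pre-$\lambda$-ring $\Z$, where $\lambda^d(m) = \binom{m}{d}$, I obtain the key identity
$$\delta_S(\lambda^d(x)) = \binom{\delta_S(x)}{d}, \qquad d \in \N.$$

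Next I would apply rigidity condition (1) to the homogeneous element $\lambda^d(x)$: it vanishes iff its augmentation vanishes. Combined with the identity above, this promotes everything to the equivalence
$$\lambda^d(x) = 0 \iff \binom{\delta_S(x)}{d} = 0,$$
so $\dim_\lambda(x)$ is precisely the largest $d \in \N$ for which $\binom{\delta_S(x)}{d}$ is nonzero.

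Setting $n := \delta_S(x)$, and knowing $n \in \N$ (which follows from the general inequality $0 \le \delta_S(x) \le \dim_\lambda(x)$ recalled just before the lemma, once one observes that the equivalence above forces $x$ to be finite-dimensional as soon as $n \ge 0$), we have $\binom{n}{d} = 0$ iff $d > n$, and therefore $\dim_\lambda(x) = n = \delta_S(x)$. The genuine obstacle is really only securing the non-negativity of $n$: after that, the equality is an immediate consequence of the $\lambda$-morphism property of $\delta_S$ together with rigidity (1), with no further computation needed.
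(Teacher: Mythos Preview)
Your core argument is exactly the paper's: both compute $\delta_S(\lambda^d(x)) = \binom{\delta_S(x)}{d}$ from the $\lambda$-morphism property of $\delta_S$ and then invoke rigidity condition~(1) to conclude $\lambda^d(x)=0$ once $d>\delta_S(x)$. The paper is simply terser: it sets $d=\delta_S(x)$, quotes the prior inequality $d\leqslant\dim_\lambda(x)$, notes $\binom{d}{d+1}=0$, and stops.

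Where you go beyond the paper is in trying to \emph{justify} $n:=\delta_S(x)\geqslant 0$, and that justification is circular. You invoke $0\leqslant\delta_S(x)\leqslant\dim_\lambda(x)$, but that inequality was established only for $x\in R^{f.d.}$; your route to finite-dimensionality (``the equivalence above forces $x$ to be finite-dimensional as soon as $n\geqslant 0$'') already assumes the conclusion. The paper does not address this point either: writing $\binom{d}{d+1}=0$ tacitly presumes $d\geqslant 0$. In fact the gap cannot be closed from the stated hypotheses alone, since $S=\Z$ (trivially graded, identity augmentation, binomial $\lambda$-structure) is rigid in the sense of Definition~\ref{def_rigid}, yet $\delta_\Z(-1)=-1\neq\infty=\dim_\lambda(-1)$. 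In the paper's intended applications the rigid semirings are objects like $\SWN$ where $\delta_S$ counts a reduced dimension and is manifestly non-negative, so the issue is harmless there; but strictly speaking the lemma needs an extra hypothesis such as $\delta_S(|S|)\subseteq\N$, and you were right to flag non-negativity as ``the genuine obstacle''.
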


\begin{proof}
  Let $d=\delta_S(x)$. We already know that $d\ppq \dim_\lambda(x)$. 
  Now since $\delta_S$ is a $\lambda$-morphism, $\delta_S(\lambda^{d+1}(x))
  = \binom{d}{d+1}=0$ so $\lambda^{d+1}(x)=0$, and $d= \dim_\lambda(x)$.
\end{proof}

\begin{defi}
  Let $R$ be an augmented $M$-graded pre-$\lambda$-semiring. A \emph{positive 
  structure} on $R$ is the data of a sub-structure $R_{\pgq 0}\subseteq R$ such
  that:
  \begin{itemize}
    \item $R_{\pgq 0}$ is rigid;
    \item $(R_{\pgq 0})^\times \subset R^\times$;
    \item if $x\in |R|$, there are $a,b\in |R_{\pgq 0}|$ such that $x+a=b$.
  \end{itemize}
  The elements of $R_{\pgq 0}$ are called \emph{positive}, and we also set 
  $R_{> 0}=R_{\pgq 0}\setminus \{0\}$. We also write $\ell(R)=\ell(R_{\pgq 0})$,
  and still call those elements the line elements of $R$.

  To keep terminology short, we say that an augmented $M$-graded pre-$\lambda$-semiring
  with positive structure is an $M$-structured semiring.
\end{defi}

Note that there is an obvious category of $M$-structured semirings, which
preserve the positive structure, and also a similar category with lax
$\lambda$-morphisms.

\begin{ex}
  Clearly, any rigid augmented $M$-graded pre-$\lambda$-semiring is $M$-structured,
  with itself as the positive structure.
\end{ex}

\begin{ex}
  If $R$ is an $M$-structured semiring, then $R[N]$ is an $(M\times N)$-structured semiring,
  with $(R[N])_{\pgq 0} = (R_{\pgq 0})[N]$ and $\ell(R[N])=\ell(R)\times N$.

  As $\N\subset \Z$ is a positive structure for $\Z$, $\N[M]$ is a positive structure
  for $\Z[M]$ with $\ell(\Z[M])=M$.
\end{ex}

\begin{prop}\label{prop_pos_groth}
  Let $S$ be a rigid $M$-structured semiring. Then it is a positive structure
  on $G(S)$.
\end{prop}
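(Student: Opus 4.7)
The plan is to combine Proposition \ref{prop_lambda_groth} with a direct verification of the three axioms defining a positive structure. By that proposition, $G(S)$ inherits from $S$ a canonical augmented $M$-graded pre-$\lambda$-ring structure, and the canonical map $\iota_S \colon S \to G(S)$ is an augmented graded $\lambda$-morphism. Since $S$ is cancellative as an additive monoid (part of rigidity), each component map $S_g \to G(S)_g$ is injective, so $\iota_S$ realises $S$ as a sub-(augmented $M$-graded pre-$\lambda$-semiring) of $G(S)$, closed under addition, multiplication, the $\lambda^d$, the grading, and the augmentation. Rigidity of this sub-semiring is immediate from the hypothesis on $S$.

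It then remains to verify the two other axioms. For the inclusion $S^\times \subset G(S)^\times$: if $x \in S^\times$, then by definition multiplication by $x$ defines an additive isomorphism $S_g \xrightarrow{\sim} S_{g + \partial(x)}$ for every $g \in M$. The Grothendieck group functor $\mathbf{ComMon} \to \mathbf{AbGp}$ preserves isomorphisms, so applying it componentwise we obtain induced additive isomorphisms $G(S)_g \xrightarrow{\sim} G(S)_{g + \partial(x)}$, which is exactly the condition for $x$ to be graded-invertible in $G(S)$. For the third axiom, the explicit componentwise description $G(S) = \bigoplus_{g \in M} G(S_g)$ (obtained from the reflector extension property discussed before Proposition \ref{prop_lambda_groth}) shows that any homogeneous element $x \in |G(S)|$ lies in some $G(S_g)$, and hence can be written as a formal difference $b - a$ with $a, b \in S_g \subseteq |S|$; this is exactly the required relation $x + a = b$.

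The argument is essentially bookkeeping: all three conditions reduce to features of $S$ that are transported through the group-completion functor. The only point worth being explicit about is that graded-invertibility is defined componentwise in terms of multiplication being an additive isomorphism, which is the reason it passes through the Grothendieck construction without any extra work; had graded-invertibility been defined more rigidly (say, requiring an actual multiplicative inverse in $S$ rather than in $G(S)$), the second axiom would have been the only delicate step, but as stated it is immediate.
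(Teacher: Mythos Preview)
Your proof is correct and follows the same approach as the paper. The paper's own proof is more terse: it treats the first and third axioms as evident (injectivity of $S\hookrightarrow G(S)$ from cancellativity, and the formal-difference description of $G(S)$) and only spells out the inclusion $S^\times\subset G(S)^\times$ via the same functoriality argument you give.
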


\begin{proof}
  Note that since $S$ is cancellative, the canonical $S\to G(S)$ is injective,
  and we can treat it as an inclusion. The only thing to show is that 
  $S^\times\subset G(S)^\times$. If $a\in S^\times$, then multiplication 
  by $a$ induces isomorphisms $S_g\to S_{g+\partial(a)}$ for all $g\in M$,
  and therefore by functoriality induces isomorphisms 
  $G(S_g) \to G(S_{g+\partial(a)})$.
\end{proof}

\begin{ex}
  This means that $SW^\bullet(K,\iota)$ is a positive structure on $GW^\bullet(K,\iota)$,
  and this is the structure we have in mind when we say that $GW^\bullet(K,\iota)$
  is an $U(K,\iota)$-structured ring.
\end{ex}

\begin{lem}\label{lem_line_satur}
  Let $R$ be an $M$-structured semiring. Then $\ell(R)$ is a saturated
  submonoid of $R^\times$, and therefore a saturated submonoid of the
  multiplicative monoid $|R|$.
\end{lem}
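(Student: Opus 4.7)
The plan is to verify the three conditions---inclusion in $R^\times$, submonoid, saturation---directly for $\ell(R)\subseteq R^\times$; the ``therefore'' clause then follows by transitivity from the fact (stated in the Preliminaries) that $R^\times$ is itself saturated in $|R|$.

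The first two points are essentially formal. For the inclusion $\ell(R)\subseteq R^\times$: by rigidity of $R_{\pgq 0}$ (Definition \ref{def_rigid}), every $1$-dimensional homogeneous element of $R_{\pgq 0}$ is graded-invertible there, hence lies in $(R_{\pgq 0})^\times\subseteq R^\times$ by the positive structure axiom. For the submonoid property: the unit $1\in R_{\pgq 0}$ satisfies $\delta_R(1)=1$, so Lemma \ref{lem_dim_rigid} applied to $R_{\pgq 0}$ gives $\dim_\lambda(1)=1$ and $1\in\ell(R)$; given $x,y\in\ell(R)$, the product $xy$ is homogeneous and lies in $R_{\pgq 0}$, and multiplicativity of the augmentation gives $\delta_R(xy)=\delta_R(x)\delta_R(y)=1$, so Lemma \ref{lem_dim_rigid} yields $xy\in\ell(R)$.

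The main step is saturation in $R^\times$. Suppose $x\in\ell(R)$ and $y\in R^\times$ with $xy\in\ell(R)$. The key idea is to use two levels of graded-invertibility of $x$. Since $x\in(R_{\pgq 0})^\times$, multiplication by $x$ is a bijection $(R_{\pgq 0})_{\partial(y)}\isom (R_{\pgq 0})_{\partial(x)+\partial(y)}$, so the element $xy$ of the target has a unique preimage $z\in (R_{\pgq 0})_{\partial(y)}$, with $xz=xy$. But $x$ is also graded-invertible in $R$, so multiplication by $x$ on $R_{\partial(y)}$ is (additively) injective; as $y,z\in R_{\partial(y)}$, this forces $y=z$, and in particular $y\in R_{\pgq 0}$. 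Then $\delta_R(x)\delta_R(y)=\delta_R(xy)=1$ in $\Z$ together with $\delta_R(x)=1$ force $\delta_R(y)=1$, so Lemma \ref{lem_dim_rigid} applied to $R_{\pgq 0}$ gives $\dim_\lambda(y)=1$, i.e.\ $y\in\ell(R)$.

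The principal obstacle is this saturation argument: one must simultaneously exploit that $x$ is graded-invertible in the positive substructure $R_{\pgq 0}$ (to produce a positive preimage of $xy$) and that $x$ is graded-invertible in $R$ itself (to identify that preimage with $y$). Once positivity of $y$ is established, everything reduces mechanically to multiplicativity of the augmentation and Lemma \ref{lem_dim_rigid}.
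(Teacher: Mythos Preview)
Your proof is correct and follows essentially the same approach as the paper's: both arguments reduce to showing $y$ is positive by using that $x$ is graded-invertible in $R_{\pgq 0}$ (to produce a positive preimage of $xy$) and in $R$ (to identify it with $y$), then conclude via multiplicativity of $\delta_R$ and Lemma~\ref{lem_dim_rigid}. Your exposition is slightly more explicit about the unit and the two-level invertibility step, but there is no substantive difference.
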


\begin{proof}
  We have by definition that $\ell(R)\subseteq (R_{\pgq 0})^\times
  \subseteq R^\times$. Lemma \ref{lem_dim_rigid} shows that $\ell(R)$ is a 
  submonoid, since $\delta_R$ is multiplicative. Since $R^\times$ is saturated 
  in $|R|$, it is enough to show that $\ell(R)$ is saturated in $R^\times$.
  
  Let $x,y,z\in R^\times$ such that $xy=z$ and $x,z\in \ell(R)$.
  We first show that $y$ is positive. Since $x$ is graded-invertible 
  in $R_{\pgq 0}$ and $\partial(z)=\partial(x)+\partial(y)$,
  we may write $z=x\cdot y'$ with $y'$ positive of degree $\partial(y)$, and since
  $x$ is graded-invertible in $R$, then $y=y'$.

  Then the equality $\delta_R(x)\delta_R(y)=\delta_R(z)$
  gives $1\times \delta_R(y)=1$ so $\dim_\lambda(y)=1$ by Lemma
  \ref{lem_dim_rigid}, and by definition $y$ is a line element.
\end{proof}

Note that positive elements have finite dimension since they are sums of homogeneous 
positive elements. The positive structure ensures that $R$ enjoys a well-behaved theory 
of dimension:

\begin{prop}\label{prop_pos_dim}
  Let $R$ be an $M$-structured semiring. Then for any element $x\in R^{f.d.}$, 
  we have $\dim_\lambda(x)=\delta_R(x)$, and the leading coefficient of $\lambda_t(x)$ is a 
  line element. In particular, all elements of $\lambda$-dimension $1$ are line elements,
  and $\dim_\lambda$ is an additive function on $R^{f.d.}$.
\end{prop}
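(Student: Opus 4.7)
The plan is to prove the dimension formula and the line-element property of leading coefficients first for positive elements, and then bootstrap to arbitrary finite-dimensional elements using the positive structure axiom to write $x = b - a$ with $a, b$ positive.

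For a homogeneous positive element $a \in (R_{\pgq 0})_g$ with $n := \delta_R(a)$, Lemma~\ref{lem_dim_rigid} applied inside the rigid semiring $R_{\pgq 0}$ gives $\dim_\lambda(a) = n$, so $\lambda_t(a) = 1 + a t + \cdots + \lambda^n(a) t^n$ is a polynomial of exact degree $n$, whose leading coefficient $\lambda^n(a)$ is positive (because $\lambda^d$ preserves the sub-structure $R_{\pgq 0}$) and lies in $R_{ng}$. Applying the augmentation, a $\lambda$-morphism, together with the formula $\lambda^n(n \cdot g) = \binom{n}{n}(ng) = ng$ inside $\Z[M]$ from Proposition~\ref{prop_lambda_graded_group_ring}, yields $\delta_R(\lambda^n(a)) = 1$; Lemma~\ref{lem_dim_rigid} then shows $\dim_\lambda(\lambda^n(a)) = 1$, so $\lambda^n(a)$ is a line element. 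For a general positive element $a = \sum_g a_g$, additivity of $\lambda_t$ gives $\lambda_t(a) = \prod_g \lambda_t(a_g)$, a polynomial of degree $\sum_g \delta_R(a_g) = \delta_R(a)$ whose leading coefficient is a product of line elements, itself a line element because $\ell(R)$ is a submonoid by Lemma~\ref{lem_line_satur}.

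For an arbitrary $x \in R^{f.d.}$, decompose $x$ into homogeneous parts and apply the positive structure axiom to each component to write $x = b - a$ with $a, b$ positive. The series $\lambda_t(a)$ has constant term $1$ and is therefore invertible in $R[[t]]$, and the identity $\lambda_t(x)\lambda_t(a) = \lambda_t(b)$ holds in $R[[t]]$. Since $x$ is finite-dimensional, $\lambda_t(x)$ is a polynomial, so comparing the leading coefficients of the polynomials on both sides — using that both $\lambda_t(a)$ and $\lambda_t(b)$ have line-element leading coefficients $\ell_a, \ell_b$ by the positive case — yields simultaneously $\dim_\lambda(x) + \delta_R(a) = \delta_R(b)$, hence $\dim_\lambda(x) = \delta_R(x)$, and the identity $c_D \ell_a = \ell_b$ where $c_D$ is the leading coefficient of $\lambda_t(x)$. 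Then $c_D \in R^\times$ (being $\ell_b \ell_a^{-1}$), and saturation of $\ell(R)$ inside $R^\times$ from Lemma~\ref{lem_line_satur} forces $c_D \in \ell(R)$.

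The remaining assertions follow immediately: if $\dim_\lambda(x) = 1$ then $x$ is finite-dimensional and is itself the leading coefficient of $\lambda_t(x) = 1 + xt$, hence a line element; and additivity of $\dim_\lambda$ on $R^{f.d.}$ follows from its identification with the semiring morphism $\delta_R$. The main subtlety lies in the final step: since $R$ itself need not be rigid, Lemma~\ref{lem_dim_rigid} does not apply directly to $x$, and one must instead extract all information from the polynomial identity $\lambda_t(x)\lambda_t(a) = \lambda_t(b)$, exploiting crucially that $\ell(R)$ is \emph{saturated} — not merely closed — inside $R^\times$.
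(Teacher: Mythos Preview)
Your argument follows essentially the same route as the paper's: first treat homogeneous positive elements via Lemma~\ref{lem_dim_rigid}, extend to all positive elements by multiplicativity of $\lambda_t$, then handle general $x\in R^{f.d.}$ by writing $x+a=b$ with $a,b$ positive and comparing leading terms in $\lambda_t(x)\lambda_t(a)=\lambda_t(b)$.

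There is one genuine slip in the final step. You write ``$c_D \in R^\times$ (being $\ell_b \ell_a^{-1}$)'', but in an $M$-structured semiring with $M$ merely a monoid, line elements are only \emph{graded}-invertible, so $\ell_a^{-1}$ need not exist in $R$. The repair is the one the paper uses: since $\ell_a\in R^\times$, multiplication by $\ell_a$ is injective on each graded piece, so $c_D\ell_a\neq 0$ gives the degree identity, and the equation $c_D\ell_a=\ell_b$ forces $c_D$ to be homogeneous (multiplication by $\ell_a$ is a graded bijection and $\ell_b$ is homogeneous). Then invoke saturation of $\ell(R)$ in the multiplicative monoid $|R|$ --- not in $R^\times$ --- from Lemma~\ref{lem_line_satur} to conclude $c_D\in\ell(R)$. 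With this adjustment your proof is complete and matches the paper's.
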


\begin{proof}
  Let $A\subseteq R$ be the subset of elements $x$ such that $\lambda_t(x)$ is a polynomial
  of degree $\delta_R(x)$ whose leading coefficient is a line element. By definition
  $A\subseteq R^{f.d}$, and we want to show that they are actually equal, which takes care
  of all statements in the proposition.

  First, we see that $|R_{\pgq 0}|\subseteq A$. Indeed, if $x$ is a positive
  homogeneous element, then $\dim_\lambda(x)=\delta_R(x)$ by Lemma \ref{lem_dim_rigid},
  and if this dimension is $n$, then $\lambda^n(x)$ is a line element because it
  is positive and has dimension $\binom{n}{n}=1$.

  Then, we see that $A$ is stable by sum: if $x,y\in A$, then $\lambda_t(x+y)
  =\lambda_t(x)\lambda_t(y)$, so if $at^n$ and $bt^m$ are the leading terms of 
  $\lambda_t(x)$ and $\lambda_t(y)$ respectively, the leading term of
  $\lambda_t(x+y)$ is $abt^{n+m}$ with $ab\in \ell(R)$ according to Lemma
  \ref{lem_line_satur} (it is the leading term since $ab\neq 0$, which can
  be deduced from $ab\in \ell(R)$). Note that $n+m$ is 
  $\delta_R(x)+\delta_R(y) =\delta_R(x+y)$.

  This shows that $R_{\pgq 0}\subseteq A$. Now let $z\in R^{f.d}$, and let us 
  write $x+z=y$ with $x, y\in R_{\pgq 0}$ (in particular, $x,y\in A$).
  Let $at^n$, $bt^m$ and $ct^r$ be the leading terms of $\lambda_t(x)$,
  $\lambda_t(y)$, and $\lambda_t(z)$ respectively. Since $a\in \ell(R)\subseteq R^\times$,
  we have $ac\neq 0$, so $ac$ is the leading coefficient of $\lambda_t(x)\lambda_t(z)$,
  and thus $ac=b$ since $\lambda_t(y)=\lambda_t(x)\lambda_t(z)$. Since $\ell(R)$ is 
  saturated in $|R|$ by Lemma \ref{lem_line_satur}, we get $c\in \ell(R)$. Also
  the degree of $\lambda_t(z)$ is $r=n-m$ which is $\delta_R(x)-\delta_R(y)=\delta_R(z)$,
  and we do get $z\in A$.
\end{proof}

\begin{rem}\label{rem_struct_dim}
  A morphism of $M$-structured rings preserves the augmentation, therefore 
  it preserves the dimension of finite-dimensional elements, and also induces
  a monoid morphism between line elements.
\end{rem}

\subsection{Determinant}\label{sec_lambda_det}

We saw in proposition \ref{prop_pos_dim} that if $R$ is $M$-structured and $x\in R$
has dimension $n$, then $\lambda^n(x)$ is a line element. This construction can be
extended to all elements, not just finite-dimensional ones, but the price to pay is that
we need to consider $G(\ell(R))$ instead of $\ell(R)$.

\begin{rem}
  Recall that if $M$ is actually a group, then $R^\times$ is a group,
  as well as $\ell(R)$ (as it is a saturated submonoid), so in that case
  $G(\ell(R))=\ell(R)$.
\end{rem}

\begin{prop}\label{prop_def_det}
  Let $R$ be an $M$-structured semiring. There is a unique monoid morphism
  \[ \det: R\to G(\ell(R)), \]
  which we call the \emph{determinant}, such that if $\dim_\lambda(x)=n\in \N$, 
  then $\det(x)=\lambda^n(x)$. If $f$ is a morphism of $M$-structured rings, 
  then $\det(f(x))=f(\det(x))$.
\end{prop}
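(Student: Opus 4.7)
The plan is to construct $\det$ in two stages: first on finite-dimensional elements, where its value is forced by the characterization and lies in $\ell(R)$, then extending to arbitrary elements of $R$ via the positive-decomposition axiom to land in $G(\ell(R))$.

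For the first stage, I will set $\det(x) := \lambda^n(x)$ for $x \in R^{f.d.}$ with $n = \dim_\lambda(x)$; this lies in $\ell(R)$ by Proposition \ref{prop_pos_dim}. Given another $y \in R^{f.d.}$ of dimension $m$, the same proposition gives $\dim_\lambda(x+y) = n+m$, and the pre-$\lambda$-semiring sum axiom
\[ \lambda^{n+m}(x+y) = \sum_{p+q=n+m} \lambda^p(x)\lambda^q(y) \]
collapses to $\lambda^n(x)\lambda^m(y)$ because all other terms vanish, proving $\det(x+y)=\det(x)\det(y)$. This already yields an additive-to-multiplicative monoid morphism $R^{f.d.}\to\ell(R)$ that covers all positive elements.

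For the second stage, I will decompose any $x\in R$ into its homogeneous components $x=\sum_g x_g$ and apply the positive-structure axiom componentwise to produce $a,b\in R_{\pgq 0}$ with $x+a=b$, then set
\[ \det(x) := \det(b)\det(a)^{-1} \in G(\ell(R)). \]
The main point to verify is well-definedness: if $x+a'=b'$ is another such presentation, then $b+a'=x+a+a'=b'+a$ directly by transitivity (no cancellation in $R$ is required), so stage 1 gives $\det(b)\det(a')=\det(b')\det(a)$ in $\ell(R)$, which is the desired equality in the Grothendieck group. Additivity follows by adding two positive decompositions and applying stage 1, and compatibility with stage 1 holds trivially by taking $a=0$.

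Uniqueness is forced: any monoid morphism $\det'$ satisfying the characterization must equal $\det$ on $R^{f.d.}$, and then the relation $\det'(x)\det'(a)=\det'(b)$ determines $\det'(x)$ for all $x\in R$. For naturality, a morphism $f:R\to S$ of $M$-structured rings preserves the augmentation and is a $\lambda$-morphism (Remark \ref{rem_struct_dim}), so $f(\det(x))=f(\lambda^n(x))=\lambda^n(f(x))=\det(f(x))$ on $R^{f.d.}$, and this extends to all of $R$ via the monoid morphism property since $f$ sends positive decompositions to positive decompositions. The hard part will be mostly the careful bookkeeping of the well-definedness check; no genuinely new idea is needed beyond Proposition \ref{prop_pos_dim}.
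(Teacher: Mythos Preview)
Your proof is correct and follows essentially the same approach as the paper, which phrases the extension as passing to $G(R^{f.d.})$ via the universal property and then using $R\subseteq G(R^{f.d.})=G(R_{\pgq 0})$, rather than unwinding this into explicit positive decompositions as you do. One small slip: ``taking $a=0$'' only checks compatibility with stage~1 for $x\in R_{\pgq 0}$, but the remaining case $x\in R^{f.d.}\setminus R_{\pgq 0}$ follows immediately from the monoid-morphism property of stage~1 applied to the relation $x+a=b$.
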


\begin{proof}
  We have from Proposition \ref{prop_pos_dim} a well-defined function 
  $\det: R^{f.d}\to \ell(R)$. It is a monoid morphism since $\det(x)$ is the
  leading coefficient of $\lambda_t(x)$ and $\lambda_t(x+y)=\lambda_t(x)\lambda_t(y)$
  (and since the leading coefficients are line elements, their product is non-zero).

  This then extends uniquely to $G(R^{f.d})\to G(\ell(R))$. But now note that
  \[ R_{\pgq 0}\subseteq R^{fd}\subseteq R\subseteq G(R_{\pgq 0})=G(R^{f.d})=G(R) \]
  so we have our unique extension to the additive monoid $R$.

  The compatibility with morphisms is easy to see since they preserve the
  $\lambda$-dimension (see Remark \ref{rem_struct_dim}) and $f(\lambda^n(x))=\lambda^n(f(x))$
  if $x$ is of finite dimension $n$.
\end{proof}

\begin{ex}\label{ex_gw_det}
  We have seen that $\ell(GW^\bullet(K,\iota))\simeq K^\times/N_{K/k}(K^\times)$
  (when $K=k$, this has to be understood as $K^\times/(K^\times)^2$). For an
  $\eps$-hermitian space $(V,h)$, the above notion of determinant then coincides
  with the classical one: the Gram determinant of an orthogonal basis is determined
  by the isometry class of $(V,h)$ only up to an element of $N_{K/k}(K^\times)$,
  and the corresponding class in $K^\times/N_{K/k}(K^\times)$ is the determinant
  of $(V,h)$.

  When $\iota=\Id_K$, this is the usual determinant of a bilinear form as a square
  class, and note that for an anti-symmetric form the determinant is always
  trivial. When $\iota\neq \Id_K$, the determinant is usually only given a 
  definition for $\eps=1$, where it then takes values in $k^\times/N_{K/k}(K^\times)$.
\end{ex}

\subsection{Contractions}

In this section we explain how to transfer structure and properties between
the various flavours of mixed Grothendieck-Witt (semi)rings. The basic connexion
between these different versions is:

\begin{defi}
  Let $R$ be an $M$-graded semiring and let $\phi: M\to N$ be a surjective
  monoid morphism. A contraction of $R$ along $\phi$ is a $\phi$-graded
  morphism $R\to S$ such that for all $g\in M$, $R_g\to S_{\phi(g)}$ is
  an isomorphism.
\end{defi}

\begin{prop}\label{prop_contr_lambda}
  Let $f: R\to S$ be a contraction along some $\phi: M\to N$. It defines a 
  relation between graded pre-$\lambda$-semiring structures on $R$ and on $S$, 
  such that two such structures are in relation if $f$ is a lax $\lambda$-morphism.
  Then this relation is actually a bijection between graded 
  pre-$\lambda$-semiring structures on $R$ and $S$.

  Likewise, the contraction defines a bijection between augmented structures
  on $R$ and $S$, and under this correspondence $R$ is rigid if and only
  $S$ is. In particular, this also gives a bijective correspondence between
  structures of $M$-structured semiring on $R$, and $N$-structured semiring
  on $S$.
\end{prop}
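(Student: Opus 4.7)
The plan is to exploit the fact that the contraction $f$ restricts to a monoid isomorphism $R_g \To S_{\phi(g)}$ on each homogeneous component, which identifies $R$ with the pullback $\phi^*(S)$ as an $M$-graded semiring. Under this identification, transferring pre-$\lambda$-structures between $R$ and $S$ amounts to a change of grading on the same underlying components.

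In the forward direction, starting from a pre-$\lambda$-structure on $S$, for $x \in R_g$ I would define $\lambda^d_R(x)$ as the unique element of $R_{dg}$ mapping to $\lambda^d_S(f(x))$ under the bijection $f|_{R_{dg}} \colon R_{dg} \To S_{d\phi(g)}$, and extend to all of $R$ via Remark~\ref{rem_def_semi_lambda}. The pre-$\lambda$-axioms on each $R_g$ transport directly from those on $S_{\phi(g)}$, and $f$ is then a lax $\lambda$-morphism by construction.

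In the reverse direction, starting from a pre-$\lambda$-structure on $R$, for $s \in S_h$ I would pick any $g \in \phi^{-1}(h)$ (possible by surjectivity of $\phi$), lift $s$ to $\tld{s} \in R_g$, and set $\lambda^d_S(s) := f(\lambda^d_R(\tld{s}))$. The main obstacle is proving this is independent of the choice of $g$. The cleanest route is to invoke Proposition~\ref{prop_check_n}: checking a $\lambda$-structure reduces to the $\N$-graded pullbacks $\phi_g^*(R)$ for $g \in M$, and by functoriality of the pullback one has $\phi_g^*(R) \cong \phi_{\phi(g)}^*(S)$ as $\N$-graded semirings; distinct preimages $g, g'$ of the same $h$ therefore give rise to the same $\N$-graded object $\phi_h^*(S)$, which forces the induced $\N$-graded pre-$\lambda$-structures to agree. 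Mutual inversion of the two constructions then follows directly from the defining relation $f \circ \lambda^d_R = \lambda^d_S \circ f$.

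For the augmented variant, the same bijection is applied to the augmentation morphisms, regarding the canonical semiring morphism $\Z[M] \to \Z[N]$ induced by $\phi$ as a contraction in its own right. The three rigidity conditions --- cancellativity, vanishing of nonzero homogeneous $0$-dimensional elements, and graded-invertibility of line elements --- are all componentwise properties, so they transport across the isomorphisms $R_g \cong S_{\phi(g)}$ without modification, which establishes the final assertion about $M$- versus $N$-structured semirings.
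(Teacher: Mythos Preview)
Your overall approach --- transporting $\lambda$-operations across the componentwise isomorphisms $R_g \simeq S_{\phi(g)}$ --- is exactly the paper's, and indeed the paper's proof is terser than yours on every point. You go beyond the paper by explicitly flagging the well-definedness obstacle in the $R \to S$ direction when $\phi$ is not injective.

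Your resolution of that obstacle, however, does not work. You claim that since $\phi_g^*(R)$ and $\phi_{g'}^*(R)$ are both isomorphic via $f$ to the same $\N$-graded semiring $\phi_h^*(S)$, the $\lambda$-structures they induce on $\phi_h^*(S)$ must coincide. This is a non sequitur: a fixed graded semiring can carry many different pre-$\lambda$-structures, and what you would actually need is that the composite isomorphism $\phi_g^*(R) \simeq \phi_{g'}^*(R)$ respects the $\lambda$-structures each side inherits from $R$ --- which nothing in the hypotheses guarantees. For a concrete failure, take $M = \Z$, $N = \{0\}$, $R = \Z[t,t^{-1}]$, $S = \Z$, and $f$ the evaluation at $t=1$; equip $R$ with the $\Z$-graded pre-$\lambda$-structure determined by $\lambda_t(t^n) = 1 + t^n X$ for $n \neq -1$ but $\lambda_t(t^{-1}) = 1 + t^{-1}X + t^{-2}X^2$. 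Then $f(t) = f(t^{-1}) = 1$ while $f(\lambda^2(t)) = 0 \neq 1 = f(\lambda^2(t^{-1}))$, so no $\lambda_S$ is compatible with this $\lambda_R$. The paper's brief proof does not address this point either; in the actual application (Corollary~\ref{cor_lambda_zd}) the descent succeeds because the contraction $\SWN \to \SW$ is assembled from hermitian Morita equivalences, and Theorem~\ref{thm_lambda_swn} independently shows those to be $\lambda$-morphisms.
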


\begin{proof}
  For any $g\in M$ and $d\in \N$, $f$ induces isomorphisms $R_g\isom S_g$ and
  $R_{dg}\to S_{dg}$, so clearly a system of functions $\lambda^d: R_g\to R_{dg}$
  uniquely determines a system $\lambda^d: S_g\to S_{dg}$ and conversely. Also,
  from the axioms of $\lambda$-operations it is clear that one is a 
  pre-$\lambda$-structure if and only if the other one is too.

  Likewise, under the isomorphisms $R_g\to S_g$, functions $R_g\to \Z$ and
  $S_g\to \Z$ are in bijective correspondence, and one is an augmentation if
  and only if the other one is.

  It is easy to see that for any $x\in |R|$, $x$ is quasi-invertible (resp. a 
  line element) if and only if $f(x)$ is, which shows that $R$ is rigid if and
  only $S$ is.
\end{proof}

\section{Mixed Grothendieck-Witt rings}\label{sec_gw}

In this section, we review the definitions and results from \cite{Moi2} about the
mixed Grothendieck-Witt ring which are necessary for our purposes. We adopt slightly 
different conventions, which we will explain, but it is completely straightforward to
adapt the results, so we just refer to \cite{Moi2} for all results in this section.

\begin{defi}
  Let $(A,\sigma)$ and $(B,\tau)$ be Azumaya algebras with involution over $(K,\iota)$.
  A hermitian Morita equivalence from $(B,\tau)$ to $(A,\sigma)$ is a $B$-$A$-bimodule $V$
  endowed with a regular $\eps$-hermitian form $h:V\times V\to A$ over $(A,\sigma)$,
  with $\eps\in U(K,\iota)$, 
  such that the action of $B$ on $V$ induces a $K$-algebra isomorphism
  $B\simeq \End_A(V)$, under which $\tau$ is sent to the adjoint involution $\sigma_h$
  (which means that $h(bu,v)=h(u,\tau(b)v)$).
\end{defi}

There exists such an equivalence if and only if $A$ and $B$ are Brauer-equivalent;
in this case, the isomorphism class of the bimodule $V$ is unique, and if we fix
such a $V$, the $\eps$-hermitian form $h$ is unique up to a multiplicative scalar: if $h'$
is another choice, there is some $\lambda\in K^\times$ such that $h'=\fdiag{\lambda}h$.

\begin{defi}\label{def_br_h}
  The hermitian Brauer 2-group $\CBrhu$ of $(K,\iota)$ is the category whose objects
  are Azumaya algebras with involutions over $(K,\iota)$, and morphisms $(B,\tau)\to (A,\sigma)$
  are isomorphism classes of $\eps$-hermitian Morita equivalences from $(B,\tau)$ to
  $(A,\sigma)$.

  The composition of $(U,g):(C,\theta)\to (B,\tau)$ and $(V,h): (B,\tau)\to (A,\sigma)$
  is defined as $(U\otimes_B V, f)$ with
  \[ f(u\otimes v, u'\otimes v') = h(v, g(u,u')v'). \]
  If $g$ is $\eps_1$-hermitian and $h$ is $\eps_2$-hermitian, then $f$ is 
  $\eps_1\eps_2$-hermitian.
\end{defi}

Note that the identity of $(A,\sigma)$ in $\CBrhu$ is the diagonal form $(A,\fdiag{1}_\sigma)$.
It can be shown that all morphisms are invertible. Specifically, if $(V,h)$ is a morphism
from $(B,\tau)$ to $(A,\sigma)$, then we can define an $A$-$B$-bimodule $\bar{V}$ as
being $V$ as a $K$-vector space, but with twisted action $a\cdot v\cdot b=
\tau(b)\cdot v\cdot \sigma(a)$. Then we have a natural $\eps(h)$-hermitian form 
$\bar{h}$ on $\bar{V}$ over $(B,\tau)$ defined by $\bar{h}(x,y)z=xh(y,z)$ for all 
$x,y,z\in V$, and the inverse of $(V,h)$ in $\CBrh$ is $(\bar{V},\fdiag{\eps(h)}\bar{h})$,
which is $\eps(h)^{-1}$-hermitian.

The association $(A,\sigma)\mapsto SW^\bullet(A,\sigma)$ defines a functor from
$\CBrhu$ to the category of $U(K,\iota)$-graded commutative monoids with lax morphisms.
Precisely, if $\eps\in U(K,\iota)$ and $(V,h)$ is an equivalence from $(B,\tau)$ to
$(A,\sigma)$, composition with $(V,h)$ induces an isomorphism $SW^\eps(B,\tau)\to
SW^{\eps\eps(h)}(A,\sigma)$, and therefore a $(\eps\mapsto \eps\eps(h))$-graded
isomorphism $SW^\bullet(B,\tau)\isom SW^\bullet(A,\sigma)$.

Actually, both $\CBrhu$ and the category of $U(K,\iota)$-graded commutative monoids
are naturally symmetric monoidal categories, and $SW^\bullet$ is a symmetric monoidal
functor. This is simply encoded by a natural map
\[ SW^\eps(A,\sigma)\otimes SW^{\eps'}(B,\tau)\to 
SW^{\eps\eps'}(A\otimes_K B,\sigma\otimes\tau) \]
which is just given by the tensor product of hermitian modules.

The general machinery of \cite{Moi2} then provides for each Azumaya algebra
with involution $(A,\sigma)$ a commutative $\Gamma_\N$-graded semiring, where 
$\Gamma_\N= \N\times U(K,\iota)$:
\begin{equation}
  \SWN = \bigoplus_{(d,\eps)\in \Gamma_\N} SW^\eps(A^{\otimes d},\sigma^{\otimes d}) 
 = \bigoplus_{d\in \N} SW^\bullet(A^{\otimes d},\sigma^{\otimes d})
\end{equation}
where by convention $(A^{\otimes 0},\sigma^{\otimes 0})=(K,\iota)$.
This actually defines a functor from $\CBrhu$ to $\Gamma_\N$-graded semirings
with lax morphisms: for any $\eps\in U(K,\iota)$, let $\phi_\eps: \Gamma_\N\to \Gamma_\N$
be the monoid morphism $\phi_\eps(d,\eps')=(d,\eps^d\eps')$; then an equivalence
$(V,h)$ from $(B,\tau)$ to $(A,\sigma)$ induces a $\phi_{\eps(h)}$-graded isomorphism
$\SWN[B,\tau]\to \SWN$, which is the direct sum of the isomorphisms 
$SW^\bullet(B^{\otimes d},\tau^{\otimes d})\isom SW^\bullet(A^{\otimes d},\sigma^{\otimes d})$
induced by $(V^{\otimes d},h^{\otimes d})$, for all $d\in \N$.
\\

In $\CBrhu$, each $(A,\sigma)$ has a "weak inverse", given by the conjugate algebra
$(\mbox{}^\iota A,\mbox{}^\iota \sigma)$. Here $\mbox{}^\iota A$ is $A$ as a ring, but
with the $K$-algebra structure given by $K\xrightarrow{\iota} K\to A$ (twisting the 
$K$-algebra structure by $\iota$), and $\mbox{}^\iota \sigma$ is just $\sigma$ as a 
function (the notation is just here to keep track of twisting). Precisely, there is a 
canonical Morita equivalence
$(A\otimes_K \mbox{}^\iota A, \sigma\otimes \mbox{}^\iota \sigma)\to (K,\iota)$, given
by $(|A|_\sigma,T_\sigma)$, where $|A|_\sigma$ is the left $(A\otimes_K \mbox{}^\iota A)$-module 
which is $A$ as a vector space, with "twisted" action
\begin{equation}\label{eq_twisted_action}
  (a\otimes b)\cdot x = ax\sigma(b)
\end{equation}
and $T_\sigma$ is the involution trace form
\begin{equation}\label{eq_inv_trace}
  T_\sigma(x,y) = \Trd_A(\sigma(x)y).
\end{equation}

Again, the machinery of \cite{Moi2} then provides a commutative $\Gamma_\Z$-graded semiring,
where $\Gamma_\Z=\Z\times U(K,\iota)$:
\begin{equation}
  \SWZ = \bigoplus_{(d,\eps)\in \Gamma_\Z} SW^\eps(A^{\otimes d},\sigma^{\otimes d}) 
 = \bigoplus_{d\in \Z} SW^\bullet(A^{\otimes d},\sigma^{\otimes d}).
\end{equation}
where by convention $(A^{\otimes d},\sigma^{\otimes d})=
(\mbox{}^\iota A^{\otimes d},\mbox{}^\iota \sigma^{\otimes d})$ when $d<0$.
Again this is functorial in $(A,\sigma)$, as an $\eps$-hermitian equivalence induces
a $\phi_\eps$-graded isomorphism of semirings, where $\phi_\eps: \Gamma_Z\to \Gamma_\Z$
extends the previous version on $\Gamma_\N$: $\phi_\eps(d,\eps')=\eps^d\eps'$, where this
time $d\in \Z$.

This can be seen as a gluing of $\SWN$ and $\SWN[\mbox{}^\iota A,\mbox{}^\iota \sigma]$
identifying the two copies of $SW^\bullet(K,\iota)$ in each semiring. The non-trivial
ingredient is that one can multiply forms of positive and negative $\Z$-degree, such that
those degrees cancel each other out. The most important example is that of degrees $1$ and
$-1$, where the morphism
\[  SW^\eps(A,\sigma)\times SW^{\eps'}(\mbox{}^\iota A,\mbox{}^\iota \sigma)\to 
SW^{\eps\eps'}(K,\iota) \]
is induced by the equivalence 
$(A\otimes_K \mbox{}^\iota A, \sigma\otimes \mbox{}^\iota \sigma)\to (K,\iota)$
explained above.

\begin{ex}
  Let $\fdiag{a}_\sigma\in SW^\eps(A,\sigma)$ and $\fdiag{b}_{\mbox{}^\iota \sigma}\in 
  SW^{\eps'}(\mbox{}^\iota A,\mbox{}^\iota \sigma)$. Note that this means that, due to
  the twisting of $(\mbox{}^\iota A,\mbox{}^\iota \sigma)$, we have 
  $\sigma(b)=\iota(\eps')b$. Then
  \[ \fdiag{a}_\sigma \cdot \fdiag{b}_{\mbox{}^\iota \sigma} = 
  T_{\sigma,a,b}\in SW^{\eps\eps'}(K,\iota) \]
  where $T_{\sigma,a,b}:A\times A\to K$ is the $\eps\eps'$-hermitian form defined
  as
  \[ T_{\sigma,a,b}(x,y) = \Trd_A(\sigma(x)ay\sigma(b)). \]
  In particular, $\fdiag{1}_\sigma \cdot \fdiag{1}_{\mbox{}^\iota \sigma} = T_{\sigma}$.
\end{ex}

\begin{rem}
  If $f: (B,\tau)\to (A,\sigma)$ is a morphism in $\CBrhu$, the induced morphism
  $f_*: \SWZ[B,\tau]\to \SWZ$ is simply the identity of $SW^\bullet(K,\iota)$
  on the $\{0\}\times U(K,\iota)$-components.
\end{rem}

\begin{rem}
  The functoriality implies that the graded semiring $\SWZ$ only
  depends on the Brauer class of $A$, but \emph{noncanonically}: if $A$ and
  $B$ are Brauer-equivalent, then there exists a Morita equivalence between
  $(A,\sigma)$ and $(B,\tau)$ inducing an isomorphism on the graded Grothendieck-Witt
  semirings, but there are several choices of such equivalences, which amount
  to a choice of scaling.
\end{rem}

When $\iota=\Id_K$, something special happens, since $(\mbox{}^\iota A,\mbox{}^\iota \sigma)$
is nothing but $(A,\sigma)$. In that case, reflecting the fact that the Brauer class
 $[A]\in \Br(K)$ has order $2$, we get a canonical isomorphism 
 $(A,\sigma)^{\otimes 2}\to (K,\Id_K)$ in $\CBrhu$. Again, the machinery in \cite{Moi2} then defines
 a commutative $\Gamma$-graded semiring, where $\Gamma = \Zd \times \mu_2(K)$:
 \begin{equation}
  \SW = SW^\bullet(K,\Id) \oplus SW^\bullet(A,\sigma).
\end{equation}
We call this the mixed Grothendieck-Witt semiring. This is again functorial in $(A,\sigma)$,
with induced morphisms being the identity on the component $SW^\bullet(K,\Id)$.

Those three flavours of graded Grothendieck-Witt semirings $\SW$, $\SWN$ and $\SWZ$
are naturally related: we have an obvious commutative triangle of commutative monoids
\[  \begin{tikzcd}
  \Gamma_\N \arrow[hook]{r} \arrow[twoheadrightarrow]{dr} & \Gamma_\Z \arrow[twoheadrightarrow]{d} \\
   & \Gamma
\end{tikzcd} \]
along which we get a lax morphism $\SWN\hookrightarrow \SWZ$ and, when $\iota = \Id_K$,
a natural triangle
\[  \begin{tikzcd}
  \SWN \arrow[hook]{r} \arrow[twoheadrightarrow]{dr} & \SWZ \arrow[twoheadrightarrow]{d} \\
   & \SW
\end{tikzcd} \]
where the morphisms to $\SW$ are contractions. The natural contraction $\SWZ\to \SW$ can
be characterized by the fact that it identifies the two copies of $SW^\bullet(A,\sigma)$
in $\SWZ$ (those in degree $1$ and $-1$).

\begin{ex}\label{ex_square_diag}
  By definition, in $\SW$ we have $\fdiag{1}_\sigma^2 = T_\sigma\in SW(K)$.
\end{ex}

Sending a (hermitian) module to its reduced dimension defines a monoid morphism from
$SW_\eps(A,\sigma)$ to $\N$. They can be bundled together to 
define a graded semiring morphism $\hat{\rdim}: \SWZ \to \N[\Gamma_\Z]$ and a 
"total reduced dimension" morphism $\rdim: \SWZ\to \N$. When $\iota=\Id_K$,
we also get $\tld{\rdim}: \SW \to \N[\Gamma]$ and $\rdim: \SW\to \N$.
\\

By taking Grothendieck rings, we also obtain graded ring versions of our semirings,
namely $\GWN$ and $\GWZ$, and $\GW$ when $\iota=\Id_K$, which are functorial in
$(A,\sigma)$, and satisfy an obvious natural commutative triangle, and the morphisms 
$\GWN\to \GW$ and $\GWZ\to \GW$ are contractions when they make sense.

\begin{prop}\label{prop_gw_mixte_split}
  If $(A,\sigma)=(K,\iota)$, then we have canonical isomorphisms of graded
  (semi)rings $\SWZ[K,\iota]\simeq SW^\bullet(K,\iota)[\Z]$, 
  $\GWZ[K,\iota]\simeq GW^\bullet(K,\iota)[\Z]$.
  When $\iota=\Id_K$ we also get $\SW[K,\Id]\simeq SW^\bullet(K,\Id)[\Zd]$ and 
  $\GW[K,\Id]\simeq GW^\bullet(K,\Id)[\Zd]$.
\end{prop}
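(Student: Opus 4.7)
The guiding principle is that $(K,\iota)$ is the unit object in the symmetric monoidal category $\CBrh$, so all its tensor powers $(K^{\otimes d},\iota^{\otimes d})$ for $d\in \Z$ are canonically isomorphic to $(K,\iota)$. For negative $d$ this uses the canonical identification $(\mbox{}^\iota K,\mbox{}^\iota \iota)\simeq(K,\iota)$ given by $\iota:K\to K$ itself. The proposition asserts that the mixed Grothendieck-Witt (semi)ring structure becomes trivial in this degenerate case.

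The first step is the additive isomorphism. For each $d\in\Z$, the canonical iso $(K^{\otimes d},\iota^{\otimes d})\simeq(K,\iota)$ identifies $SW^\bullet(K^{\otimes d},\iota^{\otimes d})$ with $SW^\bullet(K,\iota)$, preserving the $U(K,\iota)$-grading. Summing over $d$ yields a $\Gamma_\Z=\Z\times U(K,\iota)$-graded additive isomorphism
\[ \SWZ[K,\iota] \;\simeq\; \bigoplus_{d\in\Z} SW^\bullet(K,\iota) \;=\; SW^\bullet(K,\iota)[\Z]. \]

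The heart of the proof is to verify that this additive isomorphism respects the multiplication. For $d,d'$ of the same sign, the product in $\SWZ[K,\iota]$ is just iterated tensor product over $K$, which under the canonical identifications reduces to the product in $SW^\bullet(K,\iota)$. The delicate case is mixed signs, which invokes the canonical Morita equivalence $(A\otimes_K \mbox{}^\iota A,\sigma\otimes\mbox{}^\iota\sigma)\to (K,\iota)$ given by $(|A|_\sigma,T_\sigma)$. Specializing to $A=K$, the task is to check that, under the canonical identification $(K\otimes_K\mbox{}^\iota K,\iota\otimes\mbox{}^\iota\iota)\simeq(K,\iota)$, the module $|K|_\iota$ with its twisted action $(a\otimes b)\cdot x = ax\iota(b)$, together with the form $T_\iota(x,y)=\iota(x)y$, recovers the identity Morita self-equivalence $(K,\fdiag{1}_\iota)$. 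Once this is verified, the mixed-sign product also reduces to multiplication in $SW^\bullet(K,\iota)$, completing the semiring isomorphism. This mixed-sign computation is the main obstacle: it is bookkeeping about how the twist $\iota$ interacts with the identification $(\mbox{}^\iota K,\mbox{}^\iota \iota)\simeq(K,\iota)$, and how the resulting Morita self-equivalence of $(K,\iota)$ is the identity.

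The $\GWZ[K,\iota]$ statement then follows from the universal property of the graded Grothendieck ring functor, which turns a semiring iso into a ring iso. For the $\iota=\Id_K$ cases, I would apply the natural contraction $\SWZ[K,\Id]\to \SW[K,\Id]$: under the established isomorphism, this corresponds to the canonical contraction $SW^\bullet(K,\Id)[\Z]\to SW^\bullet(K,\Id)[\Zd]$ induced by $\Z\to\Zd$, since both identify the $SW^\bullet(K,\Id)$ components in degrees $1$ and $-1$. This yields $\SW[K,\Id]\simeq SW^\bullet(K,\Id)[\Zd]$, and $\GW[K,\Id]\simeq GW^\bullet(K,\Id)[\Zd]$ by taking Grothendieck rings.
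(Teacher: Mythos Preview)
Your proposal is correct and follows essentially the same approach as the paper: both rely on the canonical identifications $(K,\iota)^{\otimes d}\simeq (K,\iota)$ for all $d\in\Z$ (using $\iota$ itself for negative $d$) and then verify that the resulting additive isomorphism is multiplicative. The paper dismisses the multiplicativity verification as ``a simple check'' and says ``the reasoning is the same for the other isomorphisms,'' whereas you spell out explicitly what needs to be checked in the mixed-sign case (that the canonical Morita equivalence $(|K|_\iota,T_\iota)$ becomes the identity $(K,\fdiag{1}_\iota)$) and derive the $\Zd$-graded versions via the contraction rather than by repeating the argument; these elaborations are sound and arguably more informative than the paper's terse treatment.
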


\begin{proof}
  This just follows from the elementary observation that $(K,\iota)^{\otimes d}\simeq (K,\iota)$
  for any $d\in \N$, and also when $d<0$ since $\iota$ is an isomorphism $(\tw{K},\tw{\iota})\isom (K,\iota)$.
  Checking that the correpsonding monoid isomorphism
  \[ \bigoplus_{d\in \Z} SW^\bullet(K^{\otimes d},\iota^{\otimes d}) \Isom 
  \bigoplus_{d\in \Z} SW^\bullet(K,\iota) \]  
  is an isomorphism of graded semirings $\SWZ[K,\iota]\isom SW^\bullet(K,\iota)[\Z]$
  is then a simple check. The reasoning is the same for the other isomorphisms.
\end{proof}

\section{$\lambda$-operations on hermitian forms}\label{sec_alt}

The goal of this section is to endow our various semirings with
appropriate $\lambda$-structures. Our work in Section \ref{sec_graded_lambda} will
allow us to restrict our attention to $\SWN$, and then transfer the
structure to the other semirings.

\subsection{Alternating powers of a module}\label{sec_alt_mod}

Let $A$ be an Azumaya algebra over $K$.
The first step is to associate to each $A$-module $V$ an $A^{\otimes d}$-module
$\Alt^d(V)$, such that we recover the construction of the exterior power (or rather
the altenating power, which makes no difference in practice)
in the split case. The natural context of the alternating power construction for
vector spaces is that of Schur functors, but the development of such a theory
for modules over central simple algebras is beyond the scope of this article,
and will be addressed in future work. 

It is still useful to view the exterior power construction as a consequence of
the structure of module over a symmetric group. Namely, if $V$ is a $K$-vector
space, then $V^{\otimes d}$ is naturally a left $K[\mathfrak{S}_d]$-module, and 
$\Lambda^d(V)$ is the quotient of $V^{\otimes d}$ by the subspace generated by the 
kernels of $1-\tau$ for all transpositions $\tau\in \mathfrak{S}_d$. Now if $V$ is
a right $A$-module, it is in particular a $K$-vector space, so $V^{\otimes d}$
still has the left $K[\mathfrak{S}_d]$-module structure given by the permutation
of the $d$ factors, but it is \emph{not} the one we want to use, since it is not
compatible with the action of $A^{\otimes d}$ on the right (to see how ill-suited
this action would be, consider that if $V=A$, the $A^{\otimes d}$-module generated 
by the kernel of any $1-\tau$ is the full $V^{\otimes d}$, since it contains 
$1_A\otimes\cdots\otimes 1_A$).

Instead, recall from \cite[3.5]{BOI} that for any Azumaya $K$-algebra $B$, 
the Goldman element $g_B\in (B\otimes_K B)^\times$ is defined as the pre-image
of the reduced trace map $\Trd_B:B\to K\subseteq B$ under the canonical isomorphism
of \emph{vector spaces}
\[ B\otimes_K B\Isom B\otimes_K B^{op}\Isom \End_K(B), \]
and from \cite[10.1]{BOI} that sending a transposition $(i,\, i+1)\in \mathfrak{S}_d$ 
to $1\otimes\cdots\otimes g_B\otimes\cdots\otimes 1$ extends to a group morphism
$\mathfrak{S}_d\to (B^{\otimes d})^\times$, and thus to a $K$-algebra morphism
\[ K[\mathfrak{S}_d]\to B^{\otimes d}. \]

Now let again $V$ be a (non-zero) right $A$-module, and let $B=\End_A(V)$. Then 
from the canonical algebra morphisms from $K[\mathfrak{S}_d]$ to $B^{\otimes d}$ and
$A^{\otimes d}$, we have a canonical structure of left $K[\mathfrak{S}_d]$-module
on $V^{\otimes d}$ which commutes with the action of $A^{\otimes d}$, and a
canonical structure of right $K[\mathfrak{S}_d]$-module which commutes with 
the action of $B^{\otimes d}$ (in particular, those two actions commute with
one another). Those two actions are by default the ones we have in mind when
we work with $V^{\otimes d}$; they may be called the Goldman action, as
opposed to the permutation action, if it is necessary to make the distinction clear. 
When $V=0$, $\End_A(V)$ is not Azumaya, 
but we of course still have (trivial) actions of $K[\mathfrak{S}_d]$.
Note that both actions are compatible with scalar extension, and the one on
the left is compatible with Morita equivalence. The connection between the
Goldman and permutation action is given by:

\begin{prop}\label{prop_goldman_commut}
  Let $A$ be an Azumaya algebra over $K$, and let $V$ be a right
  $A$-module. Then for any $v_1,\dots,v_d\in V$
  and any $\pi\in \mathfrak{S}_d$:
  \[ \pi(v_1\otimes\cdots \otimes v_d)\pi^{-1} =
  v_{\pi^{-1}(1)}\otimes \cdots \otimes v_{\pi^{-1}(d)}. \]
  If $A=K$, the action of $K[\mathfrak{S}_d]$ on $V^{\otimes d}$ on the right
  is trivial, and its action on the left is the usual permutation action
  on the $d$ factors.
\end{prop}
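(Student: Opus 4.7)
The plan is to establish the two assertions in order, deriving the first from the second via faithfully flat descent.

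\emph{Split case $A=K$.} Under the canonical identification $K\otimes_K K\Isom\End_K(K)\Isom K$ (namely $a\otimes b\mapsto ab$), the element corresponding to $\Trd_K=\Id_K$ is $1$, so $g_K=1$. Hence the map $K[\mathfrak{S}_d]\to K^{\otimes d}=K$ sends every transposition to $1$ and the right action of $K[\mathfrak{S}_d]$ on $V^{\otimes d}$ is trivial. For the left action via $B=\End_K(V)$, fixing a basis of $V$ identifies $B\simeq M_n(K)$ and yields $g_B=\sum_{i,j}E_{ij}\otimes E_{ji}$; a direct computation using $E_{ij}v=v_j e_i$ shows $g_B(v\otimes w)=w\otimes v$ on $V\otimes V$. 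Each transposition $(i,\,i+1)\in\mathfrak{S}_d$ therefore swaps the $i$-th and $(i+1)$-th factors of $V^{\otimes d}$, and since transpositions generate $\mathfrak{S}_d$, the left action coincides with the usual permutation action.

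\emph{Commutator identity for general Azumaya $A$.} Denote by $\rho_\pi$ the permutation action, which satisfies $\rho_\pi\rho_{\pi'}=\rho_{\pi\pi'}$; the left and right Goldman actions commute (since $B=\End_A(V)$ commutes with the right $A$-action on $V$), and both are group actions, so the identity $\pi x\pi^{-1}=\rho_\pi(x)$ is multiplicative in $\pi$ and reduces to the case of a single transposition. By locality (the image of $(i,\,i+1)$ in both $A^{\otimes d}$ and $B^{\otimes d}$ is supported on positions $i,i+1$), we further reduce to $d=2$ and $\pi=(1,2)$: we must show $g_B(v_1\otimes v_2)g_A^{-1}=v_2\otimes v_1$ in $V\otimes_K V$. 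This is an equality between two $K$-linear endomorphisms of $V\otimes_K V$, which survives (and can be checked) after any faithfully flat base change $K\to L$; choose $L$ to split $A$. Over $L$, $A=M_n(K)$ and any right $A$-module decomposes as $V\simeq W\otimes_K K^n$, with $K^n$ the standard right $A$-module and $B=\End_A(V)\simeq\End_K(W)$. Rearranging $V\otimes_K V\simeq(W\otimes_K W)\otimes_K(K^n\otimes_K K^n)$, the left action of $g_B$ swaps the two copies of $W$ (by the split case applied to $W$), and the right action of $g_A$ swaps the two copies of $K^n$ (by the same matrix calculation, applied to the right action of $M_n(K)$ on row vectors); combining the two swaps yields $v_2\otimes v_1$.

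\emph{Main obstacle.} The right Goldman action is not compatible with Morita equivalence (only the left one is), so one cannot reduce the general case to $A=K$ via a Morita equivalence; faithfully flat descent to a splitting field followed by the explicit decomposition $V\simeq W\otimes_K K^n$ (which keeps both $A$ and $B$ visible) bypasses this issue. The essential elementary input, used twice, is that the "matrix Goldman element" $\sum_{i,j}E_{ij}\otimes E_{ji}\in M_n(K)\otimes M_n(K)$ acts as the tensor swap in both the natural left action on $K^n\otimes K^n$ and the natural right action on row vectors; careful bookkeeping with this computation, together with the two reductions (to a transposition and to a splitting field), is the bulk of the work.
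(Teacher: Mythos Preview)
Your proof is correct and follows essentially the same route as the paper: reduce to $d=2$ and a single transposition, extend scalars to a splitting field, and compute explicitly using that the Goldman element acts as the tensor swap in the split case. The only cosmetic differences are that the paper models the split module as $V\simeq\Hom_K(U,W)$ (with $A=\End_K(U)$, $B=\End_K(W)$) and verifies $g_B\cdot(f_1\otimes f_2)=(f_2\otimes f_1)\cdot g_A$ directly on elements of $U\otimes U$, whereas you use the equivalent decomposition $V\simeq W\otimes_K K^n$ and factor the swap into its $W$-part and its $K^n$-part; and the paper deduces the $A=K$ statement as a consequence, while you establish it first.
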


\begin{proof}
  Let us set $B=\End_A(V)$.
  By construction of the $K[\mathfrak{S}_d]$-module structures, we can reduce
  to the case where $d=2$ and $\pi$ is the transposition, and extending
  the scalars if necessary we may assume that $A$ and $B$ are split.
  
  In this case we have $A\simeq \End_K(U)$, $B\simeq \End_K(W)$,
  and $V\simeq \Hom_K(U,W)$ with obvious actions from $A$ and $B$.
  It is shown in \cite{BOI} that the Goldman element $g_A$ in 
  $A\otimes_K A\simeq \End_K(U\otimes_K U)$ is the switching map (and
  of course likewise for $g_B$). Therefore, if $f_1,f_2\in V$ and $u_1,u_2\in U$:
  \begin{align*}
    (g_B\cdot f_1\otimes f_2)(u_1\otimes u_2) &= g_B(f_1(u_1)\otimes f_2(u_2)) \\
                                       &= f_2(u_2)\otimes f_1(u_1) \\
                                       &= (f_2\otimes f_1)(g_A(u_1\otimes u_2)) \\
                                       &= (f_2\otimes f_1 \cdot g_A)(u_1\otimes u_2)
  \end{align*}
  so indeed $g_B\cdot f_1\otimes f_2 = f_2\otimes f_1\cdot g_A$.

  The last statement is a direct consequence, taking into account that the
  Goldman element of $K$ is $1\in K\otimes K = K$.
\end{proof}

For any finite set $X$, if $\mathfrak{S}_X$ is its symmetric
group and $Y\subset \mathfrak{S}_X$, we define the alternating element
\begin{equation}\label{eq_def_alt}
  \alt(Y) = \sum_{g\in Y} (-1)^gg \in K[\mathfrak{S}_X],
\end{equation}
where $(-1)^g$ is the sign of the permutation $g$.
When $Y=\mathfrak{S}_X$ we call 
\begin{equation}\label{eq_def_sd}
  s_X = \alt(\mathfrak{S}_X) =  \sum_{g \in \mathfrak{S}_X} (-1)^g g,
 \end{equation}
the anti-symmetrizer element of $X$.
In particular, this defines $s_d\in K[\mathfrak{S}_d]$.

\begin{lem}\label{lem_ker_im_sd}
  Let $V$ be a right $A$-module, where $A$ is an Azumaya $K$-algebra.
  Let us identify elements of $K[\mathfrak{S}_d]$ with the maps they induce
  on $V^{\otimes d}$ through the left Goldman action. Then 
  \[ \ker(s_d) = \sum_{g\in \mathfrak{S}_d} \ker(1+(-1)^gg) \]
  and
  \[ \Ima(s_d) = \bigcap_{g\in \mathfrak{S}_d} \ker(1-(-1)^gg). \]
  Moreover, the equalities still hold if we restrict $g$ to a 
  generating set of $\mathfrak{S}_d$.
\end{lem}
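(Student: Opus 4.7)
The plan is to reduce, via scalar extension to a splitting field of $A$ together with Morita equivalence, to the case where $A = K$ and $V$ is a $K$-vector space, in which case Proposition~\ref{prop_goldman_commut} identifies the Goldman action on $V^{\otimes d}$ with the usual permutation action. Both sides of each equality commute with flat base change and Morita equivalence (they are expressible as kernels/images of $K$-linear maps), so this reduction is harmless. In the split case, the two easy inclusions follow from the identity $s_d \cdot g = g \cdot s_d = (-1)^g s_d$ in $K[\mathfrak{S}_d]$: if $v = s_d(w)$, then $gv = (-1)^g v$, giving $\Ima(s_d) \subseteq \bigcap_g \ker(1 - (-1)^g g)$; and if $v \in \ker(1 + (-1)^g g)$, then $s_d(gv) = (-1)^g s_d(v)$ from the identity, while $s_d(gv) = -(-1)^g s_d(v)$ from the hypothesis $gv = -(-1)^g v$, forcing $2 s_d(v) = 0$ and hence $s_d(v) = 0$ in characteristic $\neq 2$.

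For the reverse inclusion in the image statement, I would fix a basis $e_1, \ldots, e_n$ of $V$ and expand $v \in \bigcap_g \ker(1 - (-1)^g g)$ as $v = \sum c_\phi e_\phi$ over functions $\phi \colon \{1, \ldots, d\} \to \{1, \ldots, n\}$. For non-injective $\phi$, a transposition fixing $e_\phi$ forces $c_\phi = -c_\phi = 0$; for injective $\phi$, the relation $c_{\phi \circ g} = (-1)^g c_\phi$ allows one to regroup as $v = \sum_{\phi_0 \text{ strictly increasing}} c_{\phi_0} \cdot s_d(e_{\phi_0}) \in \Ima(s_d)$. A useful byproduct is the dimension formula $\dim \Ima(s_d) = \binom{n}{d}$.

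The reverse inclusion in the kernel statement is the main technical step and I would handle it by a dimension count. Set $M = \sum_\tau \ker(1 - \tau) = \sum_\tau (1 + \tau) V^{\otimes d}$ with $\tau$ ranging over a generating set of transpositions of $\mathfrak{S}_d$ (the second form using char $\neq 2$). In the quotient $V^{\otimes d}/M$, each generating transposition acts as $-\mathrm{Id}$, and since these generate $\mathfrak{S}_d$, every $g$ acts as $(-1)^g \cdot \mathrm{Id}$. Hence $V^{\otimes d}/M$ is a quotient of the sign-coinvariants of $V^{\otimes d}$, whose dimension equals that of the sign-invariants of $(V^*)^{\otimes d}$ by duality, namely $\binom{n}{d}$ from the basis argument of the previous paragraph. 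This yields $\dim M \geq n^d - \binom{n}{d} = \dim \ker(s_d)$, and combined with the easy inclusion $M \subseteq \ker(s_d)$ we obtain equality. The generating-set refinement of the intersection statement is automatic from the sign character being a group homomorphism; that of the sum is built into the argument above, with ``a generating set of $\mathfrak{S}_d$'' naturally read as a generating set of transpositions (for non-transposition elements such as a $3$-cycle, the kernel $\ker(1 + (-1)^g g)$ is generically trivial and contributes nothing to $M$).
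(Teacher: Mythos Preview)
Your proof is correct and shares with the paper the reduction to $A=K$ via scalar extension and Morita equivalence, as well as the basis argument for the image statement. For the kernel, you take a different route: the paper exhibits explicit generators of $\ker(s_d)$ (basis tensors $e_{\bar{x}}$ with a repeated index, and differences $e_{\bar{x}} - (-1)^g e_{g\bar{x}}$) and checks that they lie in the right-hand sum, whereas you run a dimension count, observing that $V^{\otimes d}/M$ carries the sign action and is therefore a quotient of the sign-coinvariants, whose dimension you read off by duality from the image result applied to $V^*$. Your approach is more conceptual and automatically builds in the generating-set refinement for transpositions; the paper's is more hands-on but self-contained.

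Your caveat about arbitrary generating sets is well taken and in fact flags a genuine issue. For $g$ a $3$-cycle (an even permutation), $\ker(1+(-1)^g g)=\ker(1+g)$ is zero in characteristic $\neq 2,3$, so with $S=\{(1,\,2),(1,\,2,\,3)\}$ in $\mathfrak{S}_3$ and $\dim V=2$ one finds $\sum_{g\in S}\ker(1+(-1)^g g)=\ker(1-(1,\,2))$ of dimension $6$, while $\ker(s_3)=V^{\otimes 3}$ has dimension $8$. The paper's telescoping argument has the same limitation: it produces terms $e_{\bar y}-(-1)^{g_i}e_{g_i\bar y}$, and these lie in $\ker(1+(-1)^{g_i}g_i)$ only when $g_i^2=1$. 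The intended reading, sufficient for all later uses of the lemma (which invoke only the image statement for general generating sets), is that the kernel equality holds for generating sets consisting of involutions.
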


\begin{proof}
  In both cases, the equality can be checked after extending the scalars
  to a splitting field, and then by Morita equivalence it can be reduced
  to $A=K$, that is to the case of vector spaces, where it amounts to simple
  combinatorics on a basis, which we spell out explicitly.

  Choose a basis $(e_i)_{1\ppq i\ppq r}$ of $V$. For any $\bar{x}\in \{1,\dots,r\}^d$, 
  we write $e_{\bar{x}}$ for the corresponding basis element of $V^{\otimes d}$,
  and for any $I\subseteq \{1,\dots,r\}$ of size $d$, we define $e_I$ as $e_{\bar{I}}$ 
  where $\bar{I}$ consists of the elements of $I$ in increasing order.
  Then each $\bar{x}$ either has (at least) two equal components, or has 
  the form $g\bar{I}$ for a unique $I$ and a unique $g\in \mathfrak{S}_d$ 
  with support in $I$. In the first case, $s_d e_{\bar{x}}=0$, and in the
  second case $s_d e_{\bar{x}}=(-1)^g s_d e_I$. 
  
  Therefore it is easy to see that the kernel of $s_d$ is generated by 
  the $e_{\bar{x}}$
  where $\bar{x}$ has at least two equal components (in which case it is in
  the kernel of $1-g$ for some transposition $g$), and by the 
  $e_{\bar{x}}-(-1)^ge_{\bar{x}}$ where $\bar{x}$ has distinct components, 
  which is in the image of $1-(-1)^gg$, so in the kernel of $1+(-1)^gg$. 
  This shows that $\ker(s_d) \subseteq \sum_g \ker(1+(-1)^gg)$.
  The reverse inclusion can be seen from $s_d(1+(-1)^gg) = 2s_d$, so 
  since the characteristic of $K$ is not $2$, $\ker(1+(-1)^gg)\subseteq 
  \ker(s_d)$.

  Note that if $g=g_1\cdots g_r$ where the $g_i$ are in some 
  generating set $S$, then 
  \[ e_{\bar{x}}-(-1)^ge_{g\bar{x}} = (e_{\bar{x}} + e_{g_r\bar{x}}) 
  - (e_{g_r\bar{x}} + e_{g_{r-1}g_r\bar{x}}) + \cdots 
  - (-1)^g(e_{g_2\cdots g_r\bar{x}} + e_{g\bar{x}})  \]
  so actually $\ker(s_d) = \sum_{g\in S} \ker(1+(-1)^gg)$.

  It also follows from our earlier computations that the $s_de_I$ form a 
  basis of the image of $s_d$.
  Let $v = \sum a_{\bar{x}}e_{\bar{x}}\in \bigcap_g \ker(1-(-1)^gg)$, and let 
  $\bar{x}\in \{1,\dots,r\}^d$. If $\bar{x}$ has at least two equal
  components then $a_{\bar{x}}=0$ because there is a transposition $g$
  such that $g\bar{x}=\bar{x}$ so $a_{\bar{x}}=-a_{\bar{x}}$ (and we assumed
  that the characteristic of $K$ is not $2$). And if $\bar{x}$ has 
  distinct components, we have $\bar{x}=g\bar{I}$ for some subset $I$,
  and $a_{\bar{x}}=(-1)^ga_{\bar{I}}$. All in all, this means that
  $v = \sum_I a_{\bar{I}}s_de_I$, so $\bigcap_g \ker(1-(-1)^gg)\subseteq \Ima(s_d)$.
  The reverse inclusion is clear since $(1-(-1)^gg)s_d=0$.
  Finally, note that if we have $gv=(-1)^gv$ 
  for all $g$ in some generating set, then it is true for any $g$.
\end{proof}

This lemma shows in particular that, given the classical definition of 
exterior powers, when $V$ is a vector space (so $A=K$) we have a canonical 
identification $s_d V\simeq \Lambda^d(V)$, with 
$s_d(v_1\otimes\cdots\otimes v_d)$ corresponding to 
$v_1\wedge\cdots\wedge v_d$ (but this is only valid in characteristic different 
from $2$, which is why we avoid the term "exterior power" for our construction). 
This motivates the following definition of alternating powers of a module:

\begin{defi}\label{def_alt}
  Let $A$ be an Azumaya algebra over $K$, let $V$
  be a right $A$-module, and let $d\in \N$. We set
  \[ \Alt^d(V) = s_d V^{\otimes d}\subseteq V^{\otimes d} \]
  as a right $A^{\otimes d}$-module, with in particular $\Alt^0(V)=K$
  and $\Alt^1(V)=V$.
\end{defi}

\begin{rem}
  If we wanted to define an analogue of exterior powers that is also valid 
  in characteristic $2$, we could define $\operatorname{Ext}^d(V)$
  as the quotient of $V^{\otimes d}$ by the $A^{\otimes d}$-submodule
  generated by the $\ker(1-(-1)^gg)$ for $g\in \mathfrak{S}_d$. But
  this is not as convenient for our purposes.
\end{rem}

\begin{prop}\label{prop_dim_alt}
  Let $A$ be an Azumaya algebra over $K$, let $V$ be
  a right $A$-module, and let $d\in \N$. Then
  \[ \rdim_{A^{\otimes d}}(\Alt^d(V)) = \binom{\rdim_A(V)}{d}. \]
  In particular, if $d>\rdim_A(V)$ then $\Alt^d(V)$ is the zero module.
\end{prop}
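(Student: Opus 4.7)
The plan is to reduce to the classical formula $\dim_K \Lambda^d(W) = \binom{\dim_K W}{d}$ for vector spaces, by exploiting the compatibility of the construction $\Alt^d(V) = s_d V^{\otimes d}$ with both scalar extension and Morita equivalence. This compatibility was noted in the discussion preceding Definition \ref{def_alt}: both Goldman actions pull back along scalar extensions, and the \emph{left} Goldman action is preserved by Morita equivalences.

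First I would pick a splitting field $L/K$ for $A$, so that $A_L \simeq \End_L(U)$ with $\dim_L U = \deg(A)$. Reduced dimensions are invariant under faithfully flat scalar extension, and $(A^{\otimes d})_L \simeq A_L^{\otimes d}$, so I may assume from the start that $A = \End_K(U)$ is split. Under the standard Morita equivalence between $A$-modules and $K$-vector spaces, $V$ corresponds to a vector space $W$ with $\dim_K W = \rdim_A(V) = r$. The $d$-fold tensor product of this Morita equivalence gives a Morita equivalence between $A^{\otimes d}$-modules and $K$-vector spaces (this boils down to the canonical isomorphism $\Hom_{A^{\otimes d}}(U^{\otimes d}, V^{\otimes d}) \simeq \Hom_A(U,V)^{\otimes d}$), under which $V^{\otimes d}$ corresponds to $W^{\otimes d}$.

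Since the left Goldman action commutes with this Morita equivalence, $\Alt^d(V) = s_d V^{\otimes d}$ corresponds to $s_d W^{\otimes d}$, and the reduced dimension of the former equals the $K$-dimension of the latter. By Proposition \ref{prop_goldman_commut}, the left Goldman action on $W^{\otimes d}$ is just the usual permutation action (since the Goldman element of $K$ is trivial), so $s_d W^{\otimes d}$ is the image of the classical antisymmetrizer; in characteristic not $2$ this is canonically identified with $\Lambda^d(W)$, of dimension $\binom{r}{d}$. The "in particular" clause then follows immediately from $\binom{r}{d}=0$ for $d>r$.

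The only delicate point is the compatibility of the Morita equivalence with $d$-fold tensor powers, since this is what lets us transfer the Goldman-action description of $\Alt^d(V)$ to the vector space side; this is precisely where one concretely uses that the Goldman element of a split algebra is the switching map (as invoked in the proof of Proposition \ref{prop_goldman_commut}), but it should be an entirely routine verification.
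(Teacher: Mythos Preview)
Your proof is correct and follows exactly the same strategy as the paper's own argument: reduce to the split case by scalar extension, then to $A=K$ by Morita equivalence, and invoke the classical formula for $\dim_K \Lambda^d(W)$. The paper's version is simply terser, omitting the details about compatibility of the Goldman action with scalar extension and Morita equivalence that you spell out.
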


\begin{proof}
  Once again, it is enough to check this when $A$ is split, and then by
  Morita equivalence when $A=K$. But then this is the usual formula for
  the dimension of $\Lambda^d(V)$.
\end{proof}

\begin{rem}
  In \cite[§10.A]{BOI}, the algebra $\lambda^d(A)$ is defined, using our
  notation, as $\End_{A^{\otimes d}}(\Alt^d(A))$, where $A$ is seen as
  a module over itself. When $d\ppq \deg(A)$, $\lambda^d(A)$ is an
  Azumaya algebra, but when $d> \deg(A)$, $\lambda^d(A)$ is the zero ring
  (we will try to avoid using this notation in that case).

  In general, if $B=\End_A(V)$ and $d\ppq \rdim(V)$, then $\End_{A^{\otimes d}}(\Alt^d(V))$
  is canonically isomorphic to $\lambda^d(B)$.
\end{rem}

\subsection{The shuffle product}

In this section, we give an appropriate generalization of the wedge product
on exterior powers of vector spaces, that is to say an associative product
from $\Alt^p(V)\otimes_K \Alt^q(V)$ to $\Alt^{p+q}(V)$.

We start by recalling some elementary results about symmetric groups and 
shuffles. Let us fix a finite totally ordered set $X$, and a partition 
$X = \coprod_i I_i$. Then recall that in $\mathfrak{S}_X$ we have the Young 
subgroup $\mathfrak{S}_{(I_i)}$, consisting of the permutations stabilizing
each $I_i$, and the set of shuffles $Sh_{(I_i)}$, which are the 
permutations whose restriction to each $I_i$ is an increasing function
$I_i\to X$. The usefulness of shuffles is explained by the following lemma:

\begin{lem}\label{lem_shuffle_young}
  Any element of $\mathfrak{S}_X$ can be written in a unique way as $\pi\sigma$, 
  with $\pi\in Sh_{(I_i)}$ and $\sigma\in \mathfrak{S}_{(I_i)}$.
\end{lem}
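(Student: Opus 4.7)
The plan is to establish existence and uniqueness separately, using the observation that the constraint of being a shuffle completely determines $\pi$ once we know the sets $\pi(I_i)$.

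For existence, given $g\in\mathfrak{S}_X$, I would define $\pi$ by requiring that on each block $I_i$, $\pi|_{I_i}$ be the unique order-preserving bijection from $I_i$ onto $g(I_i)$ (where both are viewed as totally ordered subsets of $X$ with the induced order). Since the $g(I_i)$ partition $X$, these piecewise definitions glue to a well-defined permutation $\pi\in\mathfrak{S}_X$, which is a shuffle by construction. Then set $\sigma = \pi^{-1}g$. Because $\pi(I_i)=g(I_i)$, applying $\pi^{-1}$ to $g(I_i)$ gives back $I_i$, so $\sigma$ stabilizes each $I_i$ and thus lies in $\mathfrak{S}_{(I_i)}$.

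For uniqueness, suppose $g = \pi\sigma = \pi'\sigma'$ with $\pi,\pi'\in Sh_{(I_i)}$ and $\sigma,\sigma'\in\mathfrak{S}_{(I_i)}$. Since $\sigma$ and $\sigma'$ preserve each $I_i$ setwise, we get $\pi(I_i) = g(I_i) = \pi'(I_i)$ for all $i$. Now both $\pi|_{I_i}$ and $\pi'|_{I_i}$ are order-preserving bijections from $I_i$ onto the same subset $g(I_i)$, but an order-preserving map between two finite totally ordered sets of the same size is unique, so $\pi|_{I_i} = \pi'|_{I_i}$ for every $i$. Hence $\pi = \pi'$, and then $\sigma = \pi^{-1}g = (\pi')^{-1}g = \sigma'$.

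There is no real obstacle here; the only point requiring a bit of care is making sure that "order-preserving" for the restriction $\pi|_{I_i}\colon I_i\to X$ (the definition of shuffle) is equivalent to being the unique order-preserving bijection $I_i\to \pi(I_i)$, which is immediate once one notes that the image is determined. As an alternative sanity check, one can verify by counting: $|Sh_{(I_i)}|\cdot|\mathfrak{S}_{(I_i)}| = \binom{|X|}{|I_1|,|I_2|,\ldots}\prod_i |I_i|! = |X|! = |\mathfrak{S}_X|$, so existence combined with this equality would also force uniqueness, but the direct argument above is cleaner.
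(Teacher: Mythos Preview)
Your proof is correct and follows essentially the same approach as the paper: the paper also constructs the decomposition by observing that $\pi$ must be the unique order-preserving bijection from each $I_i$ onto $\tau(I_i)$ (it just phrases this by explicitly enumerating the elements of $I_i$ in two orders), and deduces uniqueness from the fact that this construction is forced. The only cosmetic difference is that the paper builds $\sigma$ first and sets $\pi=\tau\sigma^{-1}$, whereas you build $\pi$ first and set $\sigma=\pi^{-1}g$.
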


\begin{proof}
  Let $\tau\in \mathfrak{S}_X$. For any $i$, we can write $I_i = \{a_{i,1},\dots,a_{i,d_i}\}$ 
  such that $a_{i,1}<\dots<a_{i,d_i}$, but also $I_i = \{b_{i,1},\dots,b_{i,d_i}\}$ 
  such that $\tau(b_{i,1})<\dots<\tau(b_{i,d_i})$. Then we define a permutation $\sigma_i$ 
  of $I_i$ by $\sigma_i(b_{i,j})=a_{i,j}$, and since we do it for every $i$ we get 
  a permutation $\sigma\in \mathfrak{S}_{(I_i)}$. 

  Let $\pi=\tau\sigma^{-1}$; by construction $\pi\in Sh_{(I_i)}$ since
  $\pi(a_{i,j}) = \tau(b_{i,j})$ so $\pi(a_{i,1})<\dots <\pi(a_{i,d_i})$. The way
  we defined the $\sigma_i$ and $\pi$ makes it clear that this is the only possible
  decomposition.
\end{proof}

Shuffles also have a nice compatibility with refinements of partitions:

\begin{lem}\label{lem_shuffle_assoc}
  Suppose that each $I_i$ is itself partitioned as 
  $I_i = \coprod_j J_{i,j}$. Let $\tau\in \mathfrak{S}_X$, and let 
  $\tau = \pi\sigma$ be the decomposition given by lemma 
  \ref{lem_shuffle_young}, with $\sigma$ corresponding
  to $(\sigma_i)_i\in \prod_i \mathfrak{S}_{I_i}$. Then $\tau$ is a
  $(J_{i,j})_{i,j}$-shuffle if and only if each $\sigma_i$ is a 
  $(J_{i,j})_j$-shuffle.
\end{lem}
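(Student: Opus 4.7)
The plan is to unpack what it means for a permutation to be a shuffle with respect to a given partition, and then analyze how $\tau = \pi\sigma$ restricts to each block $J_{i,j}$. Recall that $\tau$ is a $(J_{i,j})_{i,j}$-shuffle precisely when, for every pair $(i,j)$, the restriction $\tau|_{J_{i,j}}\colon J_{i,j}\to X$ is strictly increasing, and similarly each $\sigma_i$ is a $(J_{i,j})_j$-shuffle precisely when $\sigma_i|_{J_{i,j}}\colon J_{i,j}\to I_i$ is strictly increasing for every $j$. So the proof reduces to checking, for each $(i,j)$ separately, that $\tau|_{J_{i,j}}$ is increasing if and only if $\sigma_i|_{J_{i,j}}$ is.

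The first key observation is that since $\sigma\in \mathfrak{S}_{(I_i)}$ stabilizes each $I_i$, and $J_{i,j}\subseteq I_i$, we have $\sigma(J_{i,j})=\sigma_i(J_{i,j})\subseteq I_i$. The second key observation is that, because $\pi$ is an $(I_i)$-shuffle, the restriction $\pi|_{I_i}\colon I_i\to X$ is strictly increasing.

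Combining these two points: on $J_{i,j}$, we have $\tau = \pi\circ \sigma_i$ viewed as a map into $X$, and this factors as $J_{i,j}\xrightarrow{\sigma_i} I_i \xrightarrow{\pi} X$. Since $\pi|_{I_i}$ is strictly increasing, the composite is strictly increasing on $J_{i,j}$ if and only if $\sigma_i|_{J_{i,j}}$ is strictly increasing. Running this equivalence over all pairs $(i,j)$ gives the claimed result.

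There is no real obstacle here; the lemma is essentially a bookkeeping statement, and the only subtlety worth spelling out is the factorization through $I_i$, which requires the fact that $\sigma$ preserves the coarser partition $(I_i)_i$ — this is precisely why the unique decomposition of Lemma~\ref{lem_shuffle_young} is the right tool to invoke.
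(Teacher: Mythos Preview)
Your proof is correct and follows exactly the same idea as the paper's proof, which is the one-line observation that since $\pi$ is increasing on each $I_i$, the composite $\tau = \pi\sigma$ is increasing on $J_{i,j}$ if and only if $\sigma_i$ is. You have simply spelled out the factorization $J_{i,j}\xrightarrow{\sigma_i} I_i \xrightarrow{\pi} X$ more explicitly than the paper does.
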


\begin{proof}
  Since $\pi$ is increasing on each $I_i$, it is clear that $\tau$
  is increasing on all $J_{i,j}$ if and only $\sigma_i$ is.
\end{proof}

We can record some very basic observations on the $\alt$ construction and
the shuffle group:

\begin{itemize}
  \item If $A,B,C\subseteq \mathfrak{S}_X$ are such that any element
    of $A$ can be written uniquely as a product of an element of $B$
    and an element of $C$, then $\alt(A)=\alt(B)\alt(C)$.
  \item If $A_i\subseteq \mathfrak{S}_{I_i}$ for each $i$, then $\alt(\prod_i A_i)
    = \bigotimes_i \alt(A_i)$ where we identify $\prod_i \mathfrak{S}_{I_i}
    \simeq \mathfrak{S}_{(I_i)}$ and $K[\mathfrak{S}_{(I_i)}]\simeq 
    \bigotimes_i K[\mathfrak{S}_{I_i}]$.
\end{itemize}

If we now define the shuffle element $sh_{(I_i)}\in K[\mathfrak{S}_X]$ by
\begin{equation}\label{eq_def_shuffle}
  sh_{(I_i)} = \alt(Sh_{(I_i)}) = \sum_{\pi\in Sh_{(I_i)}} (-1)^\pi \pi,
\end{equation}
we get the following consequences:

\begin{coro}\label{cor_shuffle_relat}
  It holds in $K[\mathfrak{S}_X]$ that
  \[ s_X = sh_{(I_i)}\cdot (s_{I_1}\otimes\cdots\otimes s_{I_r}) \]
  and if each $I_i$ is further partitioned as $I_i = \coprod_j J_{i,j}$,
  that
  \[ sh_{(J_{i,j})_{i,j}}=sh_{(I_i)}\cdot 
        (sh_{(J_{1,j})_j}\otimes\cdots\otimes sh_{(J_{r,j})_j}).\]
\end{coro}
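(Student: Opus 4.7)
The plan is to deduce both identities directly from the two bullet-point observations about $\alt$ (which are immediate from the definition) by feeding them the combinatorial decompositions provided by Lemma \ref{lem_shuffle_young} and Lemma \ref{lem_shuffle_assoc}. No new ideas are needed; this really is a formal consequence.

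For the first identity, I would apply Lemma \ref{lem_shuffle_young} to get that every element of $\mathfrak{S}_X$ is uniquely a product $\pi\sigma$ with $\pi \in Sh_{(I_i)}$ and $\sigma \in \mathfrak{S}_{(I_i)}$. The first bullet then gives
\[ s_X = \alt(\mathfrak{S}_X) = \alt(Sh_{(I_i)})\cdot \alt(\mathfrak{S}_{(I_i)}) = sh_{(I_i)}\cdot \alt(\mathfrak{S}_{(I_i)}). \]
The Young subgroup factors as $\mathfrak{S}_{(I_i)} \simeq \prod_i \mathfrak{S}_{I_i}$, so the second bullet yields $\alt(\mathfrak{S}_{(I_i)}) = s_{I_1}\otimes \cdots \otimes s_{I_r}$, which gives the identity.

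For the second identity, I would combine Lemma \ref{lem_shuffle_young} and Lemma \ref{lem_shuffle_assoc} as follows: given $\tau \in Sh_{(J_{i,j})_{i,j}}$, decompose $\tau = \pi\sigma$ along the coarser partition $(I_i)$. Lemma \ref{lem_shuffle_assoc} says this decomposition restricts to a bijection
\[ Sh_{(J_{i,j})_{i,j}} \;\longleftrightarrow\; Sh_{(I_i)} \times \prod_i Sh_{(J_{i,j})_j}, \]
where the right factor sits in $\mathfrak{S}_{(I_i)} \simeq \prod_i \mathfrak{S}_{I_i}$. The first bullet then gives $\alt(Sh_{(J_{i,j})_{i,j}}) = sh_{(I_i)}\cdot \alt\!\bigl(\prod_i Sh_{(J_{i,j})_j}\bigr)$, and the second bullet rewrites the last factor as $sh_{(J_{1,j})_j}\otimes \cdots \otimes sh_{(J_{r,j})_j}$, yielding the claimed identity.

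The only thing requiring care is that both bullets need the uniqueness of the decomposition (so that signs multiply correctly and no cancellations occur), but this is exactly what Lemmas \ref{lem_shuffle_young} and \ref{lem_shuffle_assoc} provide; there is no real obstacle. If anything the minor bookkeeping point is to note that the sign of a product $\pi\sigma$ is $(-1)^\pi(-1)^\sigma$, which ensures the identifications $\alt(A)\alt(B) = \alt(AB)$ and $\alt(\prod A_i) = \bigotimes \alt(A_i)$ respect signs.
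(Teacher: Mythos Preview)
Your proof is correct and follows exactly the approach of the paper: the paper's proof simply states that the two equalities are corollaries of Lemma~\ref{lem_shuffle_young} and Lemma~\ref{lem_shuffle_assoc} respectively, using the two bullet-point observations about $\alt$, which is precisely what you have spelled out in detail.
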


\begin{proof}
  Those equalities are corollaries of Lemma \ref{lem_shuffle_young} and
  \ref{lem_shuffle_assoc} respectively, using the two observations above.
\end{proof}

When $X=\{1,\dots,d\}$ and the partition comes from a decomposition
$d= d_1+\dots +d_r$, we simply write $sh_{d_1,\dots,d_r}\in K[\mathfrak{S}_d]$
for the corresponding shuffle element.

This leads to the following definition:

\begin{defi}
  Let $A$ be an Azumaya algebra over $K$, and let $V$ be a right $A$-module. 
  The shuffle algebra of $V$ is defined as the $K$-vector space
  \[ \Sh(V) = \bigoplus_{d\in \N} V^{\otimes d} \]
  (which is the same underlying space as the tensor algebra $T(V)$ over $K$)
  with the product $V^{\otimes p}\otimes_K V^{\otimes q}\to V^{\otimes p+q}$
  defined by
  \begin{equation}\label{eq_prod_shuffle}
    x \# y = sh_{p,q}(x\otimes y),
  \end{equation}
  which we call the shuffle product.
\end{defi}

The term algebra is fully justified by the following proposition:

\begin{prop}\label{prop_shuffle_alg}
  Let $A$ be an Azumaya algebra over $K$, and let $V$ be a right $A$-module.
  The shuffle algebra $\Sh(V)$ is an $\N$-graded associative $K$-algebra with unit 
  $1\in K = V^{\otimes 0}$.

  Furthermore, 
  \[ \Alt(V)=\bigoplus_{d=0}^{\rdim(V)}\Alt^d(V)\subseteq \Sh(V) \] 
  is a subalgebra, and is actually the $K$-subalgebra generated by $V=\Alt^1(V)$. 
  Precisely, if $x\in V^{\otimes p}$ and $y\in V^{\otimes q}$ with $p+q=d$, we have
  \[ (s_p x) \# (s_q y) = s_d(x\otimes y) \]
  and in particular if $x_1,\dots,x_d\in V$:
  \[ x_1\# \cdots \# x_d = s_d(x_1\otimes\dots\otimes x_d). \]
\end{prop}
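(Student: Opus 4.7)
The plan is to deduce everything from the algebraic identities in Corollary \ref{cor_shuffle_relat}, which live in $K[\mathfrak{S}_d]$, and then transfer them to $V^{\otimes d}$ via the Goldman action.

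First I would establish the explicit formula $(s_p x)\# (s_q y) = s_{p+q}(x\otimes y)$. Applying Corollary \ref{cor_shuffle_relat} to the partition $\{1,\dots,p\}\sqcup\{p+1,\dots,p+q\}$ gives the identity $s_{p+q} = sh_{p,q}\cdot (s_p\otimes s_q)$ in $K[\mathfrak{S}_{p+q}]$. Acting on $x\otimes y \in V^{\otimes p}\otimes V^{\otimes q} = V^{\otimes(p+q)}$ via the Goldman action yields the claimed formula. The iterated version $x_1\#\cdots \# x_d = s_d(x_1\otimes\cdots\otimes x_d)$ then follows by a straightforward induction on $d$, using $s_1 = 1$.

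Next I would prove associativity of $\#$. For $x\in V^{\otimes p}$, $y\in V^{\otimes q}$, $z\in V^{\otimes r}$, consider the three-block partition $P\sqcup Q\sqcup R$ of $\{1,\dots,p+q+r\}$ with $|P|=p$, $|Q|=q$, $|R|=r$. Applying the second identity of Corollary \ref{cor_shuffle_relat} to the two coarsenings $(P\sqcup Q)\sqcup R$ and $P\sqcup(Q\sqcup R)$ produces
\[ sh_{p,q,r} = sh_{p+q,r}\cdot(sh_{p,q}\otimes 1) = sh_{p,q+r}\cdot(1\otimes sh_{q,r}) \]
in $K[\mathfrak{S}_{p+q+r}]$. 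Evaluating on $x\otimes y\otimes z$ gives $(x\# y)\# z = sh_{p,q,r}(x\otimes y\otimes z) = x\#(y\# z)$. The unit property $1\# x = x\# 1 = x$ is immediate: the sets $Sh_{0,d}$ and $Sh_{d,0}$ each consist only of the identity permutation, so $sh_{0,d} = sh_{d,0} = 1$.

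Finally, the formula $(s_p x)\#(s_q y) = s_{p+q}(x\otimes y)$ shows directly that the product of an element of $\Alt^p(V)$ with an element of $\Alt^q(V)$ lies in $\Alt^{p+q}(V)$, so $\Alt(V)$ is a graded subalgebra of $\Sh(V)$ (with the grading bounded above by $\rdim(V)$ thanks to Proposition \ref{prop_dim_alt}). The iterated formula $x_1\#\cdots\# x_d = s_d(x_1\otimes\cdots\otimes x_d)$ then exhibits every pure tensor in $\Alt^d(V) = s_d V^{\otimes d}$ as a $d$-fold shuffle product of elements of $V = \Alt^1(V)$, so $V$ generates $\Alt(V)$ as a $K$-algebra.

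There is no serious obstacle here: all the content is packaged in Corollary \ref{cor_shuffle_relat}, and the only point requiring a bit of care is the compatibility of the Goldman action on $V^{\otimes(p+q)}$ with the tensor decomposition $K[\mathfrak{S}_p]\otimes K[\mathfrak{S}_q]\hookrightarrow K[\mathfrak{S}_{p+q}]$, which is automatic since this embedding is the one induced by the block-diagonal embedding of symmetric groups and the Goldman action is defined blockwise via the map $K[\mathfrak{S}_d]\to (\End_A(V))^{\otimes d}$.
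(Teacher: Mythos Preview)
Your proposal is correct and follows essentially the same approach as the paper: both derive associativity and the formula $(s_p x)\#(s_q y)=s_d(x\otimes y)$ directly from the identities in Corollary~\ref{cor_shuffle_relat}, then read off the subalgebra and generation statements. You are slightly more explicit about the unit and the generation of $\Alt(V)$ by $V$, but there is no substantive difference.
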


\begin{proof}
  For the associativity, we make use of corollary \ref{cor_shuffle_relat}:
  if $p+q+r=d$, we have 
  \[sh_{p,q,r} = sh_{p+q,r}\cdot (sh_{p,q}\otimes 1)= sh_{p,q+r}\cdot (1\otimes sh_{q,r})\]
  which by definition of the shuffle product implies that if $x\in V^{\otimes p}$,
  $y\in V^{\otimes q}$ and $z\in V^{\otimes r}$, $(x\# y)\# z = x\#(y\# z)$.
  The claims about the grading and the unit are trivial.

  The rest of the statement follows directly from the formula $(s_p x) \# (s_q y) 
  = s_d(x\otimes y)$, which is a clear consequence of corollary \ref{cor_shuffle_relat}
  since it implies $sh_{p,q}(s_p\otimes s_q)=s_d$.
\end{proof}

From what we already observed previously, when $A=K$ the alternating algebra
$\Alt(V)$ is canonically isomorphic to the exterior algebra $\Lambda(V)$ (again, 
because we avoid characteristic $2$), and
the shuffle product corresponds to the wedge product.

One of the main properties of the wedge product is its anti-commutativity,
and we do get some version of that property:

\begin{prop}\label{prop_shuffle_comm}
  Let $A$ be an Azumaya algebra over $K$, and let $V$ be a right $A$-module.
  For any $d\in \N$, $x_1,\dots,x_d\in V$ and $\pi\in \mathfrak{S}_d$, we have
  \[ x_{\pi(1)}\# \dots \# x_{\pi(d)} = (-1)^\pi (x_1\# \dots \# x_d)\pi. \]
\end{prop}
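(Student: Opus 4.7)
The plan is to reduce the shuffle-product relation to the basic identity $x_1\#\cdots\# x_d = s_d(x_1\otimes\cdots\otimes x_d)$ from Proposition \ref{prop_shuffle_alg}, together with the left/right Goldman-action compatibility proved in Proposition \ref{prop_goldman_commut}, and finally a simple reindexing of the anti-symmetrizer $s_d$.

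First, I would rewrite both sides as elements of $V^{\otimes d}$. By Proposition \ref{prop_shuffle_alg}, the left-hand side equals $s_d(x_{\pi(1)}\otimes\cdots\otimes x_{\pi(d)})$, while $(x_1\#\cdots\# x_d)\pi = s_d(x_1\otimes\cdots\otimes x_d)\cdot \pi$, where the rightmost $\pi$ acts via the \emph{right} Goldman action. So the identity I must prove is
\[ s_d(x_{\pi(1)}\otimes\cdots\otimes x_{\pi(d)}) = (-1)^\pi\, s_d(x_1\otimes\cdots\otimes x_d)\cdot \pi. \]

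The next step is to convert the permuted tensor on the left into a two-sided Goldman action. Proposition \ref{prop_goldman_commut} gives $\pi(v_1\otimes\cdots\otimes v_d)\pi^{-1} = v_{\pi^{-1}(1)}\otimes\cdots\otimes v_{\pi^{-1}(d)}$; applying this with $\pi$ replaced by $\pi^{-1}$ (and with $v_i = x_i$) yields
\[ x_{\pi(1)}\otimes\cdots\otimes x_{\pi(d)} = \pi^{-1}(x_1\otimes\cdots\otimes x_d)\,\pi. \]
Substituting into the left-hand side of the target equality and using that the right Goldman action commutes with the left Goldman action (in particular with $s_d$), I get
\[ s_d(x_{\pi(1)}\otimes\cdots\otimes x_{\pi(d)}) = (s_d\pi^{-1})(x_1\otimes\cdots\otimes x_d)\cdot \pi. \]

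Finally, I would verify the scalar identity $s_d\pi^{-1} = (-1)^\pi s_d$ in $K[\mathfrak{S}_d]$: writing $s_d = \sum_{g\in \mathfrak{S}_d}(-1)^g g$ and substituting $g' = g\pi^{-1}$ gives $s_d\pi^{-1} = \sum_{g'}(-1)^{g'\pi}g' = (-1)^\pi s_d$. Plugging this in finishes the proof. The only real care needed is to keep the Goldman left and right actions separate and to apply Proposition \ref{prop_goldman_commut} in the correct direction; no further obstacle arises because the sign computation for $s_d$ is just a reindexing.
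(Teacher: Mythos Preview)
Your proof is correct and follows essentially the same approach as the paper's own proof: use Proposition \ref{prop_shuffle_alg} to write the shuffle product as $s_d$ applied to the tensor, invoke Proposition \ref{prop_goldman_commut} (with $\pi$ replaced by $\pi^{-1}$) to turn the permuted tensor into a two-sided Goldman action, and conclude with $s_d\pi^{-1}=(-1)^\pi s_d$. The paper's version is just a one-line compression of exactly these steps.
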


\begin{proof}
  From proposition \ref{prop_goldman_commut} we see that $x_{\pi(1)}\# \dots \# x_{\pi(d)}$
  is $s_d\pi^{-1}(x_1\otimes\cdots\otimes x_d)\pi$, so we can conclude using
  $s_d g = (-1)^g s_d$ for any $g\in \mathfrak{S}_d$.
\end{proof}

We now establish the analogue of the well-known addition formula for exterior powers 
of vector spaces, which computes the alternating powers of a direct sum $U\oplus V$
in terms of those of $U$ and $V$. 

\begin{prop}\label{prop_alt_sum_subalg}
  Let $A$ be an Azumaya algebra over $K$, and let $U$ and $V$ be right 
  $A$-modules. Then the subspaces $\Sh(U)$ and $\Sh(V)$ of $\Sh(U\oplus V)$
  are subalgebras for the shuffle product, and likewise $\Alt(U)$ and
  $\Alt(V)$ are subalgebras of $\Alt(U\oplus V)$.
\end{prop}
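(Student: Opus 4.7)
The plan is to reduce everything to a single equivariance statement: the inclusion $U^{\otimes d} \hookrightarrow (U \oplus V)^{\otimes d}$ (and, symmetrically, $V^{\otimes d} \hookrightarrow (U \oplus V)^{\otimes d}$) is equivariant for the left Goldman action of $K[\mathfrak{S}_d]$. Granting this, all four claims follow directly: for $x \in U^{\otimes p}$ and $y \in U^{\otimes q}$, the shuffle product $x \# y = sh_{p,q}(x \otimes y)$ computed in $\Sh(U \oplus V)$ lies in $U^{\otimes(p+q)}$ and agrees with the shuffle product in $\Sh(U)$, proving $\Sh(U)$ is a subalgebra; the same equivariance identifies $s_d U^{\otimes d}$ computed inside $(U \oplus V)^{\otimes d}$ with $\Alt^d(U)$, so $\Alt(U) \subseteq \Alt(U \oplus V)$ is a subalgebra because it is already one inside $\Sh(U)$ by Proposition~\ref{prop_shuffle_alg}. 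The arguments for $V$ are verbatim.

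To prove the equivariance, I would invoke Proposition~\ref{prop_goldman_commut}, which can be rewritten as
\[ \pi \cdot (v_1 \otimes \cdots \otimes v_d) = (v_{\pi^{-1}(1)} \otimes \cdots \otimes v_{\pi^{-1}(d)}) \cdot \pi \]
for any $\pi \in \mathfrak{S}_d$ and any $v_i$ in any right $A$-module. The right-hand side involves only the combinatorial permutation of tensor entries, which visibly preserves the subspace $U^{\otimes d} \subseteq (U \oplus V)^{\otimes d}$, together with the right Goldman action of $K[\mathfrak{S}_d]$, which factors through the algebra morphism $K[\mathfrak{S}_d] \to A^{\otimes d}$ and is therefore manifestly natural in the underlying $A$-module. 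Thus both sides take the same value on a pure tensor in $U^{\otimes d}$ whether that tensor is regarded as living in $U^{\otimes d}$ or in $(U \oplus V)^{\otimes d}$; by linearity, the left actions of every $\pi \in \mathfrak{S}_d$ agree on all of $U^{\otimes d}$.

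The main obstacle to anticipate is precisely that the left Goldman action is \emph{not} manifestly natural under morphisms of $A$-modules: it is defined through $\End_A(\,\cdot\,)$, and $\End_A(U)$ does not embed in $\End_A(U \oplus V)$ in any way that transports Goldman elements to Goldman elements. The trick is to circumvent this by using Proposition~\ref{prop_goldman_commut} to re-express the left action in terms of data (a combinatorial permutation and the right Goldman action through $A^{\otimes d}$) which is transparently natural in the underlying $A$-module. Once this rephrasing is made, the verification is essentially a one-liner, and the rest of the proposition is a direct unwinding of the definitions of $\Sh$, $\Alt$ and the shuffle product.
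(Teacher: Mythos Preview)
Your proposal is correct and follows the same overall strategy as the paper: both reduce the proposition to the single equivariance statement that the inclusion $U^{\otimes d}\hookrightarrow (U\oplus V)^{\otimes d}$ respects the left Goldman action (this is precisely the paper's Lemma~\ref{lem_goldman_sum}), and both deduce the subalgebra claims immediately from it.

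The difference lies in how the equivariance itself is established. The paper proves Lemma~\ref{lem_goldman_sum} by the standard reduction: extend scalars to split $A$, then use Morita equivalence to reduce to $A=K$, where the Goldman action is the ordinary permutation action and the claim is obvious. You instead argue directly from Proposition~\ref{prop_goldman_commut}, rewriting the left Goldman action of $\pi$ as a combinatorial permutation followed by the right action of the image of $\pi$ in $A^{\otimes d}$; since both ingredients are manifestly natural in the $A$-module, the equivariance follows without any splitting argument. Your route is a bit more economical in that it recycles a result already proved (itself via a splitting reduction) rather than performing a second reduction, and it makes explicit the conceptual reason the lemma holds. The paper's route is shorter to write down and avoids unpacking the formula. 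Either is perfectly acceptable.
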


This is an immediate consequence of the following lemma:

\begin{lem}\label{lem_goldman_sum}
  Let $A$ be an Azumaya algebra over $K$, and let $U$ and $V$ be right 
  $A$-modules. Then the restriction of the Goldman action of $\mathfrak{S}_d$
  on $(U\oplus V)^{\otimes d}$ to $U^{\otimes d}$ (resp. $V^{\otimes d}$) is
  precisely the Goldman action on $U^{\otimes d}$ (resp. $V^{\otimes d}$).
\end{lem}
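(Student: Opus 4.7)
The plan is to reduce the statement, via the definition of the Goldman action through the images of the adjacent transpositions, to the case $d=2$: concretely, it suffices to show that the Goldman element $g_{B_W}\in B_W\otimes_K B_W$ (with $B_W=\End_A(U\oplus V)$) acts on $U\otimes_K U\subseteq W\otimes_K W$ exactly as $g_{B_U}\in B_U\otimes_K B_U$ (with $B_U=\End_A(U)$) does, and symmetrically for $V$. Once this is established for $d=2$, the higher-$d$ statement follows by applying the result in each pair of adjacent slots, since the Goldman action of $\mathfrak{S}_d$ is built slot-by-slot from the generating transpositions.

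Both sides being $K$-linear endomorphisms of $U\otimes_K U$, I would then verify the equality after faithfully flat base change to a splitting field $L$ of $A$, using that the Goldman element is compatible with scalar extension (as is the formation of endomorphism algebras of finitely generated projective modules and of tensor products of modules). Over $L$, we have $A_L\simeq \End_L(P)$ for some finite-dimensional $L$-vector space $P$, and Morita theory lets me write $U_L\simeq P\otimes_L U_0$ and $V_L\simeq P\otimes_L V_0$ as $A_L$-modules, so that $(U\oplus V)_L\simeq P\otimes_L(U_0\oplus V_0)$ and $B_W\otimes_K L\simeq \End_L(U_0\oplus V_0)$, $B_U\otimes_K L\simeq \End_L(U_0)$.

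Under these identifications, Proposition \ref{prop_goldman_commut} applied over the field $L$ describes the Goldman action of $\mathfrak{S}_2$ via $B_W\otimes L$ on $(U\oplus V)_L^{\otimes 2}\simeq P^{\otimes 2}\otimes_L(U_0\oplus V_0)^{\otimes 2}$ as the identity on $P^{\otimes 2}$ tensored with the switching map on $(U_0\oplus V_0)^{\otimes 2}$; the corresponding description for $B_U\otimes L$ acting on $U_L^{\otimes 2}\simeq P^{\otimes 2}\otimes_L U_0^{\otimes 2}$ is the identity on $P^{\otimes 2}$ tensored with the switching map on $U_0^{\otimes 2}$. Since $U_0^{\otimes 2}\subseteq (U_0\oplus V_0)^{\otimes 2}$ is obviously stable under the switching map and the two restrictions visibly agree, the two Goldman actions coincide on $U_L^{\otimes 2}$, hence on $U^{\otimes 2}$ by faithfulness of base change. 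The only real obstacle is the bookkeeping around how the Morita identifications intertwine the Goldman elements on $B_W$ and $\End_L(U_0\oplus V_0)$ (and similarly for $U$); but this is exactly the content of the split-case analysis in Proposition \ref{prop_goldman_commut} together with the standard compatibility of Goldman elements with Morita equivalence, so no genuinely new ingredient is needed.
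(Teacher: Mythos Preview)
Your proof is correct and takes essentially the same approach as the paper: scalar extension to a splitting field followed by Morita reduction, so that via Proposition~\ref{prop_goldman_commut} the Goldman action becomes the ordinary permutation action, for which the compatibility with the direct-sum decomposition is evident. The paper phrases the Morita step as reducing all the way to $A=K$ (rather than keeping $A_L=\End_L(P)$) and does not bother isolating the case $d=2$, but the substance is identical.
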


\begin{proof}
  We can reduce by scalar extension to the case where $A$ is split, and by Morita 
  equivalence to $A=K$, in which case the result is about the classical permutation 
  actions, and is therefore clear.
\end{proof}

We now give our result for computing the alternating powers of direct sums:

\begin{prop}\label{prop_add_mod_alt}
  Let $A$ be an Azumaya algebra over $K$, and let $U$ and $V$ be right 
  $A$-modules. Then for any $d\in \N$ the shuffle product induces an isomorphism of
  $A^{\otimes d}$-modules :
  \[ \bigoplus_{p+q=d} \Alt^p(U)\otimes_K \Alt^q(V) \Isom \Alt^d(U\oplus V). \]
\end{prop}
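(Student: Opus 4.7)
The plan is to take the obvious candidate for the isomorphism --- the shuffle product --- and to prove it is an isomorphism by matching reduced dimensions and then reducing surjectivity to the classical case of vector spaces. By Proposition \ref{prop_alt_sum_subalg}, for each decomposition $d=p+q$ the inclusions $\Alt^p(U),\Alt^q(V)\subseteq \Alt(U\oplus V)$ yield a $K$-bilinear shuffle-product map
\[ \mu_{p,q}\colon \Alt^p(U)\otimes_K\Alt^q(V)\To \Alt^d(U\oplus V),\qquad x\otimes y\longmapsto x\#y, \]
and I would assemble these into the candidate $\mu:=\bigoplus_{p+q=d}\mu_{p,q}$. The map $\mu$ is $A^{\otimes d}$-linear: under the identification of $U^{\otimes p}\otimes_K V^{\otimes q}$ with the natural summand of $(U\oplus V)^{\otimes d}$ (carrying $U$ in positions $1,\dots,p$ and $V$ in positions $p+1,\dots,d$), one has $\mu_{p,q}(x\otimes y)=sh_{p,q}(x\otimes y)$, and the Goldman action of $K[\mathfrak{S}_d]$ commutes with the right $A^{\otimes d}$-action by construction.

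I would then match reduced dimensions. By Proposition \ref{prop_dim_alt} and the Vandermonde identity
\[ \sum_{p+q=d}\binom{\rdim U}{p}\binom{\rdim V}{q}=\binom{\rdim U+\rdim V}{d}, \]
both sides have the same reduced $A^{\otimes d}$-dimension, hence the same $K$-dimension. Since source and target are finitely generated projective $A^{\otimes d}$-modules, it therefore suffices to prove that $\mu$ is surjective.

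For surjectivity I would reduce to the split case. Both $\Alt^d$ and the shuffle product are built from $s_d\in K[\mathfrak{S}_d]$ and tensor products, so they commute with scalar extension; after extending scalars to a splitting field $L$ of $A$, Morita equivalence reduces the problem to $A=L$. In that setting, Proposition \ref{prop_goldman_commut} identifies the left Goldman action with the classical permutation action, so $\Alt^d$ coincides with the usual exterior power $\Lambda^d$ and the shuffle product with the wedge product (Proposition \ref{prop_shuffle_alg}); the statement then reduces to the familiar addition formula $\Lambda^d(U_0\oplus V_0)=\bigoplus_{p+q=d}\Lambda^p(U_0)\otimes_L\Lambda^q(V_0)$. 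The main technical point, which I expect to be the only real obstacle, is justifying that the reduction via scalar extension and Morita genuinely transports all of the relevant structure, namely the Goldman $\mathfrak{S}_d$-action, the type decomposition $(U\oplus V)^{\otimes d}=\bigoplus_I W_I$, and the shuffle product, to the corresponding classical constructions on vector spaces; this rests on Lemma \ref{lem_goldman_sum} (compatibility of the Goldman action with the direct sum decomposition) together with the purely $s_d$-and-tensor definition of $\Alt^d$ and of $\#$, both of which are manifestly Morita-invariant.
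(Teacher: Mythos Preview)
Your argument is correct, but the paper proves surjectivity by a different, more direct route. Instead of reducing to the split case, the paper observes (via Proposition \ref{prop_shuffle_alg}) that $\Alt^d(U\oplus V)$ is $K$-spanned by products $x_1\#\cdots\#x_d$ with each $x_i$ in $U$ or in $V$, and then uses the anticommutativity formula of Proposition \ref{prop_shuffle_comm},
\[ x_{\pi(1)}\#\cdots\#x_{\pi(d)} = (-1)^{\pi}\,(x_1\#\cdots\#x_d)\cdot\pi, \]
to reorder the factors so that the $U$-entries come first and the $V$-entries last, at the cost of a right $A^{\otimes d}$-multiplication by $\pi$. Since the image of $\mu$ is an $A^{\otimes d}$-submodule, this lands in the image, giving surjectivity; the dimension count (as you do) finishes the proof. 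Your reduction-to-split argument is valid and conceptually clean, but it relies on carefully transporting the Goldman action, the direct-sum decomposition, and the shuffle product through scalar extension and Morita equivalence, which you correctly flag as the main technical point. The paper's approach sidesteps this entirely by exploiting the anticommutativity already established in the shuffle algebra, and in fact Proposition \ref{prop_shuffle_comm} is proved precisely to make this step work.
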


\begin{proof}
  Proposition \ref{prop_alt_sum_subalg} ensures that we can compute all shuffle products
  in $\Alt(U\oplus V)$.

  Using proposition \ref{prop_shuffle_alg}, we easily establish that $\Alt^d(U\oplus V)$ is
  linearly spanned by the elements of the type $x_1 \# \cdots \# x_d$ with $x_i$
  in $U$ or $V$. Now using proposition \ref{prop_shuffle_comm}, we can permute the $x_i$
  so that $x_1,\dots,x_p\in U$ and $x_{p+1},\dots,x_d\in V$, at the cost of multiplying
  on the right by some $\pi\in \mathfrak{S}_d$. But any element of this type
  is obviously in the image of the map described in the statement of the proposition, so
  this map is surjective. We may then conclude that it is an isomorphism by checking the 
  dimensions over $K$ (using proposition \ref{prop_dim_alt} for instance).
\end{proof}

Note that in particular this defines a natural $\N$-graded $K$-linear isomorphism 
between $\Alt(U)\otimes_K \Alt(V)$ and $\Alt(U\otimes V)$, but it is not quite
an algebra isomorphism.

\begin{rem}
  This construction of $\Alt^d(V)$ defines a structure of $\N$-graded
  pre-$\lambda$-ring (and actually $\N$-structured ring) on 
  $\bigoplus_d K_0(A^{\otimes d})$. It is not very impressive 
  when we work over a field since this ring is just $\Z[\N]$,
  but the construction also works over an arbitrary base ring,
  and in that case this is more meaningful.
\end{rem}

\subsection{Alternating powers of a $\eps$-hermitian form}

Now if $V$ is a non-zero $A$-module equipped with a $\eps$-hermitian form
$h$ with respect to some involution $\sigma$ on $A$, we want
to endow $\Alt^d(V)$ with an induced form $\Alt^d(h)$ such that
when $A=K$ we recover the exterior power of the hermitian form.
This requires understanding the interaction between the action of the
symmetric group and the involutions on the algebras. 

Recall that any group algebra $K[G]$ has a canonical involution $S$ given
by $S: g\mapsto g^{-1}$. As $K$ carries the involution $\iota$, we can 
twist $S$ to $S_\iota$ which still acts as $g\mapsto g^{-1}$ on elements
of $G$, but acts as $\iota$ on $K$.

If $(R,\sigma)$ is any ring with involution, its isometry group $\Iso(R,\sigma)$
is the set of $x\in R$ such that $x\sigma(x)=1$ (it is a subgroup of $R^{\times}$).
In particular, $G$ is a subgroup of $\Iso(K[G],S_\iota)$. We can improve on this 
observation, with an "involutory" version of the fact that $G\mapsto K[G]$
is left adjoint to $R\mapsto R^\times$:

\begin{prop}\label{prop_adj_inv_alg}
  Let $\mathbf{AlgInv}(K,\iota)$ be the category of $K$-algebras endowed with
  an involution acting by $\iota$ on $K$. The functor $G\mapsto (K[G],S_\iota)$ from the 
  category of groups to $\mathbf{AlgInv}(K,\iota)$ is left adjoint to 
  $(R,\sigma)\mapsto \Iso(R,\sigma)$.
\end{prop}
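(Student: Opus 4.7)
The plan is to exhibit a natural bijection
\[ \Hom_{\mathbf{AlgInv}(K,\iota)}((K[G],S_\iota),(R,\sigma)) \isom \Hom_{\mathbf{Grp}}(G,\Iso(R,\sigma)) \]
refining the classical adjunction $\Hom_{K\text{-}\mathbf{Alg}}(K[G],R)\simeq \Hom_{\mathbf{Grp}}(G,R^\times)$, which is given by pre-composition with the canonical inclusion $G\hookrightarrow K[G]$. The strategy is simply to check that under this classical bijection, the involution-preserving $K$-algebra morphisms on the left correspond exactly to the group morphisms landing in $\Iso(R,\sigma)\subseteq R^\times$ on the right.

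First I would handle the forward direction. Let $f: (K[G],S_\iota)\to (R,\sigma)$ be a morphism in $\mathbf{AlgInv}(K,\iota)$, and denote by $\phi: G\to R^\times$ its restriction to $G$. For each $g\in G$, using $S_\iota(g)=g^{-1}$ and $\sigma\circ f = f\circ S_\iota$, we compute
\[ \sigma(\phi(g)) = \sigma(f(g)) = f(S_\iota(g)) = f(g^{-1}) = \phi(g)^{-1}, \]
so $\phi(g)\sigma(\phi(g)) = 1$ and hence $\phi$ factors through $\Iso(R,\sigma)$.

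For the reverse direction, start with a group morphism $\phi: G\to \Iso(R,\sigma)$, and let $\tilde{\phi}: K[G]\to R$ be the unique $K$-algebra morphism extending $\phi$, as provided by the classical adjunction. It remains to check that $\sigma\circ \tilde{\phi} = \tilde{\phi}\circ S_\iota$. Both sides are additive maps which are $\iota$-semilinear over $K$ (here one uses that $\sigma$ restricts to $\iota$ on $K$, and that $S_\iota$ is defined to do the same), so it suffices to check equality on the $K$-basis $G\subseteq K[G]$. For $g\in G$,
\[ \sigma(\tilde{\phi}(g)) = \sigma(\phi(g)) = \phi(g)^{-1} = \phi(g^{-1}) = \tilde{\phi}(g^{-1}) = \tilde{\phi}(S_\iota(g)), \]
where the second equality uses $\phi(g)\in\Iso(R,\sigma)$. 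Hence $\tilde{\phi}$ is a morphism in $\mathbf{AlgInv}(K,\iota)$.

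The two constructions are mutually inverse by the corresponding uniqueness clause of the classical adjunction, and naturality in $G$ and $(R,\sigma)$ is immediate because the bijection is given by restriction and $K$-linear extension. There is no real obstacle here; the only delicate point is keeping track of the $\iota$-semilinearity so that checking compatibility of the involutions reduces to checking it on group elements, where it is literally the condition that $\phi(g)\in\Iso(R,\sigma)$.
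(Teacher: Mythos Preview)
Your proof is correct and follows the same approach as the paper: reduce to the classical adjunction $\Hom_{K\text{-}\mathbf{Alg}}(K[G],R)\simeq \Hom_{\mathbf{Grp}}(G,R^\times)$ and observe that compatibility with the involutions corresponds precisely to the image lying in $\Iso(R,\sigma)$. The paper's proof is terser, only spelling out the reverse direction and declaring the rest ``clear by definition of $S_\iota$''; your version is simply more explicit about both directions and the semilinearity reduction.
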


\begin{proof}
  Let $(R,\sigma)$ be in $\mathbf{AlgInv}(K,\iota)$ and let $G$ be a group.
  Let $f:G\to \Iso(R,\sigma)$. Then since $\Iso(R,\sigma)$ is a subgroup of
  $R^\times$, $f$ extends uniquely to a $K$-algebra morphism $K[G]\to R$.
  It is now clear by definition of $S_\iota$ that $f$ is a morphism from
  $(K[G],S_\iota)$ to $(R,\sigma)$.
\end{proof}

If we return to the Goldman morphism for Azumaya algebras:

\begin{prop}\label{prop_sym_inv}
  Let $(A,\sigma)$ be an Azumaya algebra with involution over $(K,\iota)$. Then for 
  any $d\in \N$ the canonical $K$-algebra morphism $K[\mathfrak{S}_d]\to A^{\otimes d}$ 
  is a morphism of involutive algebras 
  \[  (K[\mathfrak{S}_d],S_\iota)\to (A^{\otimes d},\sigma^{\otimes d}). \]

  Equivalently, the canonical group morphism $\mathfrak{S}_d\to A^{\otimes d}$
  actually takes values in the isometry group $\Iso(A^{\otimes d},\sigma^{\otimes d})$.
\end{prop}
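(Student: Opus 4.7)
The plan is to first use the adjunction of Proposition \ref{prop_adj_inv_alg} to note that the two formulations are equivalent: since the canonical map $K[\mathfrak{S}_d] \to A^{\otimes d}$ extends a group morphism $\mathfrak{S}_d \to (A^{\otimes d})^\times$, and $S_\iota$ acts as $g \mapsto g^{-1}$ on $\mathfrak{S}_d$, being compatible with the involutions amounts precisely to saying that each image lies in $\Iso(A^{\otimes d}, \sigma^{\otimes d})$. So I will aim at the second (equivalent) formulation.

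Since isometries form a subgroup of $(A^{\otimes d})^\times$, and $\mathfrak{S}_d$ is generated by the simple transpositions $(i,\,i+1)$, it is enough to check that the image of each $(i,\,i+1)$ is an isometry. The image is $T_i = 1 \otimes \cdots \otimes g_A \otimes \cdots \otimes 1$ with $g_A$ placed in positions $i,i+1$, and $\sigma^{\otimes d}(T_i)$ has $(\sigma\otimes\sigma)(g_A)$ in the same positions. So the whole statement boils down to the single identity
\[ g_A \cdot (\sigma \otimes \sigma)(g_A) = 1 \quad \text{in } A \otimes_K A. \]
Since $g_A^2 = 1$ (a standard property of the Goldman element from \cite[3.5]{BOI}), this is equivalent to $(\sigma \otimes \sigma)(g_A) = g_A$, i.e., $g_A$ is $\sigma\otimes\sigma$-symmetric.

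To establish this last equality, I would use the abstract characterization of $g_A$ together with a computation in the split case. On one hand, applying $\sigma \otimes \sigma$ to the defining conjugation relation $g_A(a \otimes b) = (b \otimes a) g_A$ shows that $(\sigma \otimes \sigma)(g_A)$ satisfies the same relation, hence by Skolem--Noether equals $\lambda g_A$ for some $\lambda \in K^\times$; squaring then forces $\lambda \in \mu_2(K)$, and applying $\sigma \otimes \sigma$ twice forces $\lambda \iota(\lambda) = 1$, so $\lambda = \pm 1$. To pin down $\lambda = +1$, I would extend scalars from $k$ to a finite separable cover $L/k$ that simultaneously splits $K$ (if $\iota \neq \Id$) and $A$. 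In that split situation, $A \simeq \End_{K'}(U)$ with $\sigma$ the adjoint involution of some $b$, $g_A$ acts on $U \otimes U$ as the switching map $s: u \otimes u' \mapsto u' \otimes u$ (as used in the proof of Proposition~\ref{prop_goldman_commut}), and $\sigma \otimes \sigma$ is the adjoint involution for $b \otimes b$. A one-line direct check gives
\[ (b \otimes b)(s(u \otimes u'), v \otimes v') = b(u', v) b(u, v') = (b \otimes b)(u \otimes u', s(v \otimes v')), \]
so $s$ is self-adjoint, hence $\lambda = 1$ after base change. Since $\lambda \in \{\pm 1\} \subset k$ and scalar extension is faithful, $\lambda = 1$ already over $k$.

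The main obstacle is the careful handling of the unitary case when splitting: one needs to first enlarge $k$ so that $K$ becomes $k' \times k'$ (turning $(A, \sigma)$ into $(E \times E^{\mathrm{op}}, \text{exch})$ as recalled in the preliminaries) and only then split $E$ over a further extension. Everything else is either formal (the adjunction and reduction to transpositions) or a short explicit computation.
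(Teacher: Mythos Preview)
Your argument is correct, and the overall strategy (reduce to $d=2$ and prove that $(\sigma\otimes\sigma)(g_A)=g_A$) is the same as the paper's. Where you diverge is in how you establish that symmetry. The paper does it directly from the \emph{definition} of $g_A$ via the reduced trace: writing $g_A=\sum_i a_i\otimes b_i$, one has $\sum_i a_i x b_i=\Trd_A(x)$ for all $x$, and a two-line manipulation using $\sum_i b_i\otimes a_i = g_A g_A g_A = g_A$ and $\Trd_A\circ\sigma=\iota\circ\Trd_A$ shows $\sum_i \sigma(a_i)x\sigma(b_i)=\Trd_A(x)$ as well, which is exactly $(\sigma\otimes\sigma)(g_A)=g_A$. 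No splitting, no Skolem--Noether, no case distinction on the kind of the involution.

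Your route---Skolem--Noether to get $(\sigma\otimes\sigma)(g_A)=\lambda g_A$, then a squaring argument to force $\lambda\in\mu_2(K)$, then a faithful scalar extension to the split case to pin down $\lambda=1$---works, but it is noticeably heavier. The splitting step in the unitary case (which you flag as the main obstacle) is genuinely fiddly, and the whole detour is avoidable once you remember that $g_A$ is \emph{characterized} by the trace identity, so checking symmetry is just checking that $(\sigma\otimes\sigma)(g_A)$ satisfies the same identity. One minor remark: your ``applying $\sigma\otimes\sigma$ twice forces $\lambda\iota(\lambda)=1$'' step is not redundant, since when $K\simeq k\times k$ the group $\mu_2(K)$ has four elements and you need this extra constraint to land in $\{\pm 1\}\subset k$.
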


\begin{proof}
  The equivalence of the two formulations is clear given Proposition \ref{prop_adj_inv_alg}. 
  We can then reduce to the case of $d=2$ and a
  transposition, which means we have to prove that the Goldman element is symmetric
  for $\sigma^2$. By definition of $g_A$, this amounts to the fact that if 
  $g_A = \sum_i a_i\otimes b_i$ with $a_i,b_i\in A$, then for any $x\in A$,
  $\sum_i \sigma(a_i)x\sigma(b_i) = \Trd_A(x)$. But that element is 
  $\sigma(\sum_i b_i\sigma(x)a_i)$, which is $\Trd_A(\sigma(x))=\Trd_A(x)$ because 
  $\sum_i b_i\otimes a_i = g_A(\sum_i a_i\otimes b_i)g_A = g_A$.
\end{proof}

\begin{coro}\label{cor_sd_sym}
  Let $(A,\sigma)$ be an Azumaya algebra with involution over $(K,\iota)$, and let $(V,h)$ 
  be an $\eps$-hermitian module over $(A,\sigma)$. Then for any $d\in \N$,
  any $x,y\in V^{\otimes d}$, and any $\theta\in K[\mathfrak{S}_d]$ we have
  \[ h^{\otimes d}(\theta\cdot x, y) = h^{\otimes d}(x, S_\iota(\theta)\cdot y).  \]
  In addition, $S_\iota(s_d)=s_d$, so we get $h^{\otimes d}(x,s_d y) 
  = h^{\otimes d}(s_d x,y)$.
\end{coro}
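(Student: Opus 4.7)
The plan is to combine Proposition \ref{prop_sym_inv}, applied to the Azumaya algebra with involution $(B,\sigma_h)$ where $B=\End_A(V)$, with the standard adjoint property of the tensor form $h^{\otimes d}$, and then to verify $S_\iota(s_d)=s_d$ by a short reindexing argument. The case $V=0$ is vacuous, so I may assume $V\neq 0$, which ensures that $(B,\sigma_h)$ is indeed an Azumaya algebra with involution over $(K,\iota)$.

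First I would record that the $\eps^d$-hermitian form $h^{\otimes d}$ on $V^{\otimes d}$ over $(A^{\otimes d},\sigma^{\otimes d})$ has adjoint involution $\sigma_h^{\otimes d}$ on $\End_{A^{\otimes d}}(V^{\otimes d})\simeq B^{\otimes d}$. This is a direct check on elementary tensors $u_1\otimes\cdots\otimes u_d$, using the defining property $h(u(x),y)=h(x,\sigma_h(u)(y))$ coordinatewise. Consequently, for every $b\in B^{\otimes d}$,
\[ h^{\otimes d}(b\cdot x,y) = h^{\otimes d}(x, \sigma_h^{\otimes d}(b)\cdot y). \]
Now the left Goldman action of $K[\mathfrak{S}_d]$ on $V^{\otimes d}$ is by definition the composite of the canonical morphism $K[\mathfrak{S}_d]\to B^{\otimes d}$ and the $B^{\otimes d}$-action, and Proposition \ref{prop_sym_inv} applied to $(B,\sigma_h)$ tells us this morphism intertwines $S_\iota$ with $\sigma_h^{\otimes d}$. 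Substituting into the displayed equation yields the first claimed formula.

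For the identity $S_\iota(s_d)=s_d$, the coefficients $(-1)^g\in \Z\subset K$ are fixed by $\iota$, so
\[ S_\iota(s_d)=\sum_{g\in \mathfrak{S}_d}(-1)^g\,g^{-1}. \]
Reindexing via $h=g^{-1}$ and using $(-1)^{h^{-1}}=(-1)^h$ recovers $s_d$. The final equality $h^{\otimes d}(s_d x,y)=h^{\otimes d}(x,s_d y)$ is then the first formula specialized to $\theta=s_d$.

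No serious obstacle is expected here: the whole argument is a formal consequence of previously established results, the only substantive input being the observation that Proposition \ref{prop_sym_inv} applies just as well to $(B,\sigma_h)$ as to $(A,\sigma)$, which in turn rests on the fact that $B$ is itself Azumaya over $K$ with an involution extending $\iota$.
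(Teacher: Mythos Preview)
Your proof is correct and follows essentially the same route as the paper: apply Proposition \ref{prop_sym_inv} to $(B,\sigma_h)=(\End_A(V),\sigma_h)$ so that the Goldman map $K[\mathfrak{S}_d]\to B^{\otimes d}$ intertwines $S_\iota$ with $\sigma_h^{\otimes d}$, then invoke the defining adjoint identity for $h^{\otimes d}$; the reindexing argument for $S_\iota(s_d)=s_d$ is likewise the same. Your version is slightly more explicit (handling $V=0$ separately and spelling out that $\sigma_{h^{\otimes d}}=\sigma_h^{\otimes d}$), but there is no substantive difference.
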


\begin{proof}
  Let $B=\End_A(V)$ and $\tau = \sigma_h$, and write $\theta_B\in B^{\otimes d}$
  for the image of $\theta$ by the canonical morphism. Then proposition
  \ref{prop_sym_inv} shows that $\tau^{\otimes d}(\theta_B)$ is the image
  of $S_\iota(\theta)$ in $B^{\otimes d}$, which shows the first formula by definition
  of the adjoint involution. The fact that $S_\iota(s_d)=s_d$ is clear since $g\mapsto g^{-1}$
  is bijective on $\mathfrak{S}_d$ and preserves $(-1)^g$.
\end{proof}

This observation allows the following definition:

\begin{defi}\label{def_alt_h}
  Let $(A,\sigma)$ be an Azumaya algebra with involution over $(K,\iota)$,
  and let $(V,h)$ be an $\eps$-hermitian module over $(A,\sigma)$. We set:
  \[ \foncdef{\Alt^d(h)}{\Alt^d(V)\times \Alt^d(V)}{A^{\otimes d}}
    {(s_d x, s_d y)}{h^{\otimes d}(s_d x,y) = h^{\otimes d}(x,s_d y).} \]
\end{defi}

This is well-defined according to corollary \ref{cor_sd_sym}, since 
$h^{\otimes d}(s_d x,y)$ only depends on $s_d x$ and not the full $x$,
and conversely $h^{\otimes d}(x,s_d y)$ only depends on $s_d y$.

The definition can be rephrased as
\begin{equation}
  \Alt^d(x_1\# \dots \# x_d, y_1\# \dots \# y_d) 
    = h^{\otimes d}(x_1\# \dots \# x_d, y_1\otimes \dots \otimes y_d).
\end{equation}

\begin{prop}
  Let $(A,\sigma)$ be an Azumaya algebra with involution over $(K,\iota)$, 
  and let $(V,h)$ be an $\eps$-hermitian module over $(A,\sigma)$. The application $\Alt^d(h)$ 
  is an $\eps^d$-hermitian form over $(A^{\otimes d},\sigma^{\otimes d})$.
\end{prop}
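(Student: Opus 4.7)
First, I would observe that well-definedness is already implicit in Corollary \ref{cor_sd_sym}: since $S_\iota(s_d)=s_d$, the two expressions $h^{\otimes d}(s_d x, y)$ and $h^{\otimes d}(x, s_d y)$ agree, and each depends only on the appropriate one of $s_d x$ or $s_d y$, so neither choice of lift matters. For sesquilinearity, I would first record that $h^{\otimes d}$ is itself an $\eps^d$-hermitian form on $V^{\otimes d}$ over $(A^{\otimes d},\sigma^{\otimes d})$ (a short induction from the obvious case $d=2$). Because the right $A^{\otimes d}$-action on $V^{\otimes d}$ commutes with the left Goldman action of $K[\mathfrak{S}_d]$, one has $(s_d y)\cdot a = s_d(y\cdot a)$ and $(s_d x)\cdot a = s_d(x\cdot a)$, so the two equivalent forms of Definition \ref{def_alt_h} immediately give $A^{\otimes d}$-linearity in the second variable and $\sigma^{\otimes d}$-semilinearity in the first.

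The $\eps^d$-hermitian symmetry is then a direct chain of equalities:
\begin{align*}
\sigma^{\otimes d}\bigl(\Alt^d(h)(s_d x, s_d y)\bigr)
  &= \sigma^{\otimes d}\bigl(h^{\otimes d}(x, s_d y)\bigr) \\
  &= \eps^d\, h^{\otimes d}(s_d y, x) \\
  &= \eps^d\, \Alt^d(h)(s_d y, s_d x),
\end{align*}
using the $\eps^d$-hermitian property of $h^{\otimes d}$ and, in the last step, the first form of Definition \ref{def_alt_h}.

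The only substantial point, and the main obstacle, is non-degeneracy. My plan is a standard two-step reduction to the split commutative case. Since the construction of $s_d$, $\Alt^d(V)$, and $\Alt^d(h)$ all commute with scalar extension $K\to K'$, and non-degeneracy of a hermitian form over an Azumaya algebra can be checked after a faithfully flat scalar extension, I may assume $A$ is split. The Goldman construction is Morita-invariant (as noted just before Proposition \ref{prop_goldman_commut}), and hermitian Morita equivalence preserves non-degeneracy, so I may further reduce to the case $A=K$. There, the identification $s_d(v_1\otimes\cdots\otimes v_d)=v_1\wedge\cdots\wedge v_d$ discussed after Lemma \ref{lem_ker_im_sd} turns Definition \ref{def_alt_h} into the classical formula
\[ \Alt^d(h)(v_1\wedge\cdots\wedge v_d,\, w_1\wedge\cdots\wedge w_d) = \det\bigl(h(v_i,w_j)\bigr), \]
whose non-degeneracy over a field is a standard computation: on a diagonal basis for $(V,h)$, the induced basis of $\Lambda^d(V)$ is orthogonal for $\Alt^d(h)$ with Gram entries that are products of nonzero scalars.
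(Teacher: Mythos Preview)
Your argument for sesquilinearity and the $\eps^d$-symmetry is essentially the paper's proof; the paper carries out the same two short computations and stops there. You go further and address non-degeneracy, which the paper's proof in fact omits (despite the standing convention that hermitian forms are nondegenerate), so your treatment is more complete. That said, your route to non-degeneracy via scalar extension and Morita reduction is heavier than necessary, and the Morita step implicitly uses the compatibility of the $\Alt^d$ construction with hermitian Morita equivalences, which in the paper's logical order is only established later, in the functoriality part of Theorem~\ref{thm_lambda_swn}. A direct one-line argument avoids this: if $s_d y$ lies in the radical of $\Alt^d(h)$, then the second expression in Definition~\ref{def_alt_h} gives $h^{\otimes d}(x, s_d y)=0$ for every $x\in V^{\otimes d}$ (since every such $x$ is a lift of some $s_d x\in\Alt^d(V)$), and non-degeneracy of $h^{\otimes d}$ forces $s_d y=0$.
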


\begin{proof}
  We have for all $x,y\in V^{\otimes d}$ and all $a,b\in A^{\otimes d}$:
  \begin{align*}
    \Alt^d(h)(s_dx\cdot a,s_dy\cdot b) &= h^{\otimes d}(xa,s_dyb) \\
                                       &= \sigma^{\otimes d}(a)h^{\otimes d}(x,s_dy)b \\
                                       &= \sigma^{\otimes d}(a)\Alt^d(h)(s_dx,s_dy)b
  \end{align*}
  and
  \begin{align*}
    \Alt^d(h)(s_dy,s_dx) &= h^{\otimes d}(y,s_dx) \\
                         &= \eps^d\sigma^{\otimes d}(h^{\otimes d}(s_dx,y)) \\
                         &= \eps^d\sigma^{\otimes d}(\Alt^d(h)(s_dx,s_dy)). \qedhere
  \end{align*}
\end{proof}

\begin{rem}
  Clearly this construction works in the following setting: if $(V,h)$ is an $\eps$-hermitian
  space over $(A,\sigma)$, with adjoint algebra with involution $(B,\tau)$, and
  $b\in \Sym(B^\times, \tau)$, then $(b\cdot x, b\cdot y)\mapsto h(x,b\cdot y)$ is
  well-defined and is an $\eps$-hermitian form on $bV$.
\end{rem}

\begin{ex}
  When $A=K$ and $h$ is a hermitian form on the vector space $V$, then
  if $x=u_1\otimes\cdots\otimes u_d$ and $y=v_1\otimes\cdots\otimes v_d$,
  we get
  \[ \Alt^d(h)(s_dx,s_dy)=\sum_{\pi\in \mathfrak{S}_d}(-1)^\pi \prod_ih(u_{\pi^{-1}(i)},v_i) = \det(h(u_i,v_j)) \]
  so when we identify $\Alt^d(V)$ and $\Lambda^d(V)$, $\Alt^d(h)$ does
  correspond to $\lambda^d(h)$.
\end{ex}

The following simple observation is extremely usful in applications:

\begin{prop}
  Let $(A,\sigma)$ be an Azumaya algebra with involution over $(K,\iota)$, and let $(V,h)$ 
  be an $\eps$-hermitian module over $(A,\sigma)$. For any $d\in \N$ and any
  $\lambda\in K^\times$, we have 
  \[ \Alt^d(\fdiag{\lambda}_\iota h) = \fdiag{\lambda^d}_\iota \Alt^d(h). \]
\end{prop}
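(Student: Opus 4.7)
The proof is essentially a direct unpacking of Definition \ref{def_alt_h} together with the elementary fact that tensor product is multiplicative in scalar scaling. My plan is as follows.

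First I would make explicit what the form $\fdiag{\lambda}_\iota h$ is: since $\lambda \in K^\times$ is central and the elementary form $\fdiag{\lambda}_\iota$ on $K$ satisfies $\fdiag{\lambda}_\iota(1,1)=\lambda$, the scaled form $\fdiag{\lambda}_\iota h$ is the hermitian form on the same underlying module $V$ whose value on a pair $(v,w)$ is $\lambda\, h(v,w) \in A$. (Of course $\lambda$ must satisfy $\iota(\lambda)=\eps' \lambda$ for some $\eps' \in U(K,\iota)$ for this to produce a hermitian form, which will then be $\eps\eps'$-hermitian; the proof does not depend on what $\eps'$ is, only on the formal scaling.)

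Second, I would observe that because $\lambda$ is central, the $d$-fold tensor product of this scaled form satisfies
\[ (\fdiag{\lambda}_\iota h)^{\otimes d}(x,y) = \lambda^d\, h^{\otimes d}(x,y) \]
for all $x,y \in V^{\otimes d}$. It suffices to check this on simple tensors $x=v_1\otimes\cdots\otimes v_d$ and $y=w_1\otimes\cdots\otimes w_d$, where both sides equal $\lambda^d\bigl(h(v_1,w_1)\otimes\cdots\otimes h(v_d,w_d)\bigr)$, and then extend by multilinearity.

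Third, I would apply Definition \ref{def_alt_h}: for any $x,y\in V^{\otimes d}$,
\[ \Alt^d(\fdiag{\lambda}_\iota h)(s_d x, s_d y) = (\fdiag{\lambda}_\iota h)^{\otimes d}(s_d x, y) = \lambda^d\, h^{\otimes d}(s_d x, y) = \lambda^d\, \Alt^d(h)(s_d x, s_d y), \]
which by the same convention for the notation $\fdiag{\lambda^d}_\iota(\cdot)$ is exactly $\fdiag{\lambda^d}_\iota \Alt^d(h)(s_d x, s_d y)$.

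There is no genuine obstacle; the only thing to keep straight is the notational convention that $\fdiag{\lambda}_\iota h$ denotes scalar rescaling of the form by $\lambda$ and that this rescaling commutes with all the operations ($\otimes$, restriction to the image of $s_d$) involved in building $\Alt^d$.
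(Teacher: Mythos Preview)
Your proof is correct and follows exactly the same approach as the paper's: the paper's proof is a single sentence invoking the definition of $\Alt^d(h)$ together with the identity $(\fdiag{\lambda}_\iota h)^{\otimes d}\simeq \fdiag{\lambda^d}_\iota h^{\otimes d}$, and you have simply spelled this out in more detail. One minor remark: you need not worry about whether $\lambda$ satisfies $\iota(\lambda)=\eps'\lambda$ for some $\eps'\in U(K,\iota)$, since for any $\lambda\in K^\times$ the ratio $\iota(\lambda)/\lambda$ automatically lies in $U(K,\iota)$.
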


\begin{proof}
  This follows the definition of from $\Alt^d(h)$ and the fact that 
  $(\fdiag{\lambda}_\iota h)^{\otimes d}\simeq \fdiag{\lambda^d}_\iota h^{\otimes d}$.
\end{proof}

Since $\Alt^d(V)\subset V^{\otimes d}$, we can compare $\Alt^d(h)$ and $h^{\otimes d}$
on $\Alt^d(V)$.

\begin{prop}\label{prop_restr_alt}
  Let $(A,\sigma)$ be an Azumaya algebra with involution over $(K,\iota)$, and let $(V,h)$ 
  be an $\eps$-hermitian module over $(A,\sigma)$. For any $d\in \N$,
  we can restrict the $\eps^d$-hermitian form $h^{\otimes d}$ to 
  $\Alt^d(V)\subseteq V^{\otimes d}$, and we get
  \[ h^{\otimes d}_{| \Alt^d(V)} = \fdiag{d!}\Alt^d(h). \]
\end{prop}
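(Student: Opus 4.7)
The plan is to reduce the claim to a single algebraic identity in the group algebra $K[\mathfrak{S}_d]$, namely that $s_d \cdot s_d = d! \cdot s_d$, and then use the hermitian adjunction property of $s_d$ established in Corollary \ref{cor_sd_sym}.

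First I would unwind the statement: evaluating $h^{\otimes d}_{|\Alt^d(V)}$ on a pair $(s_d x, s_d y)$ with $x,y \in V^{\otimes d}$ gives $h^{\otimes d}(s_d x, s_d y)$, while $\fdiag{d!}\Alt^d(h)(s_d x, s_d y) = d! \cdot h^{\otimes d}(s_d x, y)$. So the whole statement reduces to the equality
\[ h^{\otimes d}(s_d x, s_d y) = d! \cdot h^{\otimes d}(s_d x, y). \]

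The key step is the identity $s_d^2 = d! \cdot s_d$ in $K[\mathfrak{S}_d]$. For any $h \in \mathfrak{S}_d$ one has $s_d \cdot h = (-1)^h s_d$ (this is immediate from the definition by a reindexing), so
\[ s_d \cdot s_d = \sum_{g \in \mathfrak{S}_d} (-1)^g \, s_d \cdot g = \sum_{g \in \mathfrak{S}_d} (-1)^g (-1)^g s_d = d! \cdot s_d. \]
Combining this with Corollary \ref{cor_sd_sym}, which tells us that $S_\iota(s_d) = s_d$ and that $s_d$ passes across the form, we get
\[ h^{\otimes d}(s_d x, s_d y) = h^{\otimes d}(x, s_d \cdot s_d y) = h^{\otimes d}(x, d! \, s_d y) = d! \cdot h^{\otimes d}(x, s_d y), \]
and by the defining identity of $\Alt^d(h)$ the right-hand side equals $d! \cdot \Alt^d(h)(s_d x, s_d y)$.

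There is really no obstacle here: the whole argument is a one-line computation once we have Corollary \ref{cor_sd_sym} and the identity $s_d^2 = d! s_d$. The latter is a completely classical fact about antisymmetrizers, and holds over any commutative base ring, so no characteristic hypothesis is needed for it (we only use characteristic $\neq 2$ implicitly through the ambient setup). The factor $d!$ that appears is exactly the reason why, in the split case $A=K$, the natural inclusion $\Lambda^d(V) \hookrightarrow V^{\otimes d}$ recovers the tensor product hermitian form rescaled by $d!$, as expected.
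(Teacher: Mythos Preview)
Your proof is correct and follows exactly the same approach as the paper: use $S_\iota(s_d)=s_d$ from Corollary \ref{cor_sd_sym} to move $s_d$ across the form, and then invoke the identity $s_d^2 = d!\, s_d$. The paper's proof is just a terser version of what you wrote.
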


\begin{proof}
  Since $s_d$ is symmetric, we have $h^{\otimes d}(s_dx,s_dy)=h^{\otimes d}(x,(s_d)^2y)$.
  But it is easy to see that $s_d^2=(d!)s_d$, which concludes.
\end{proof}

Note that this means that we could have simply defined $\Alt^d(h)$ in terms
of the restriction of $h^{\otimes d}$ in characteristic $0$, but in arbitrary
characteristic this does not work.

\begin{ex}\label{ex_alt_0_1}
  This shows in any characteristic that $\Alt^0(h)=\fdiag{1}$ and
  $\Alt^1(h)=h$.
\end{ex}

We can then show the compatibility of this construction with the sum formula:

\begin{prop}\label{prop_add_alt}
  Let $(A,\sigma)$ be an Azumaya algebra with involution over $(K,\iota)$, and let $(U,h)$
  and $(V,h')$ be $\eps$-hermitian modules over $(A,\sigma)$. The module
  isomorphism in Proposition \ref{prop_add_mod_alt} induces an isometry 
  \[ \bigoplus_{p+q=d} \Alt^p(h)\otimes_K \Alt^q(h') \Isom \Alt^d(h\perp h'). \]
\end{prop}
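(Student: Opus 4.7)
The plan is to reduce the claim to pairs of pure tensors by sesquilinearity, and then exploit two key ingredients: the shuffle decomposition $s_d = sh_{p,q}\cdot(s_p\otimes s_q)$ from Corollary \ref{cor_shuffle_relat}, together with a placement-pattern orthogonality of $(h\perp h')^{\otimes d}$ on $(U\oplus V)^{\otimes d}$. For each subset $S\subseteq\{1,\dots,d\}$, let $W_S\subseteq (U\oplus V)^{\otimes d}$ denote the summand with $U$ in positions $i\in S$ and $V$ elsewhere; then $(h\perp h')^{\otimes d}$ vanishes on $W_S\times W_{S'}$ whenever $S\neq S'$, since at any index in the symmetric difference one factor of $h\perp h'$ pairs a vector of $U$ with one of $V$.

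Fix $\tilde{x}\in U^{\otimes p}$, $\tilde{y}\in V^{\otimes q}$, $\tilde{x}'\in U^{\otimes p'}$, $\tilde{y}'\in V^{\otimes q'}$ with $p+q=p'+q'=d$, and set $x=s_p\tilde{x}$, $y=s_q\tilde{y}$ (similarly for primes). By Proposition \ref{prop_shuffle_alg} we have $x\#y = s_d(\tilde{x}\otimes\tilde{y})$ inside $(U\oplus V)^{\otimes d}$, so the definition of $\Alt^d$, together with the identity $s_d = sh_{p,q}\cdot(s_p\otimes s_q)$ and Corollary \ref{cor_sd_sym} (to move the factor $sh_{p,q}$ to the second argument, where it becomes $S_\iota(sh_{p,q}) = \sum_{\pi\in Sh_{p,q}}(-1)^\pi\pi^{-1}$), yields
\[ \Alt^d(h\perp h')\bigl(x\#y,\,x'\#y'\bigr) = \sum_{\pi\in Sh_{p,q}}(-1)^\pi\, (h\perp h')^{\otimes d}\bigl(x\otimes y,\,\pi^{-1}(\tilde{x}'\otimes\tilde{y}')\bigr). \]
By Proposition \ref{prop_goldman_commut}, $\pi^{-1}(\tilde{x}'\otimes\tilde{y}')$ is the tensor with entries reordered by $\pi$, multiplied on the right by an element of $A^{\otimes d}$; since both $U$ and $V$ are stable under the right $A$-action, this right action preserves placement patterns, so $\pi^{-1}(\tilde{x}'\otimes\tilde{y}')$ lies in $W_{\pi^{-1}(\{1,\dots,p'\})}$.

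Combined with $x\otimes y\in W_{\{1,\dots,p\}}$, the placement orthogonality forces $\pi^{-1}(\{1,\dots,p'\})=\{1,\dots,p\}$ for a nonzero contribution; this requires $p=p'$ and $\pi\in\mathfrak{S}_p\times\mathfrak{S}_q$, which intersects $Sh_{p,q}$ only in the identity. The only surviving term is therefore $(h\perp h')^{\otimes d}(x\otimes y,\tilde{x}'\otimes\tilde{y}')$ (with $p=p'$, $q=q'$); restricted to $U^{\otimes p}\otimes V^{\otimes q}$ the form $(h\perp h')^{\otimes d}$ factors as $h^{\otimes p}\otimes h'^{\otimes q}$ (up to the obvious identification of $A^{\otimes d}$-factors), so this equals $h^{\otimes p}(x,\tilde{x}')\otimes h'^{\otimes q}(y,\tilde{y}') = \Alt^p(h)(x,x')\otimes\Alt^q(h')(y,y')$. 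This simultaneously yields the orthogonality of the various $(p,q)$-components and the desired isometry on each component. The main technical subtlety will be tracking how the Goldman action interacts with placement patterns, as it is not a literal permutation of tensor factors; Proposition \ref{prop_goldman_commut} is essential for converting it into a permutation modulo a placement-preserving right $A^{\otimes d}$-action.
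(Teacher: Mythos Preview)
Your proof is correct and follows essentially the same strategy as the paper's: reduce to pure tensors, unfold the definition of $\Alt^d$, use Proposition~\ref{prop_goldman_commut} to convert the Goldman action into a permutation of factors times a right $A^{\otimes d}$-action, and invoke placement-pattern orthogonality of $(h\perp h')^{\otimes d}$ to kill unwanted terms. The only organizational difference is that the paper expands $s_d$ over all of $\mathfrak{S}_d$ and shows the terms outside the Young subgroup $\mathfrak{S}_{p,q}$ vanish, whereas you first factor $s_d=sh_{p,q}(s_p\otimes s_q)$, absorb $s_p\otimes s_q$ into $x\otimes y$, and move $sh_{p,q}$ across via Corollary~\ref{cor_sd_sym}, leaving a sum over the much smaller set $Sh_{p,q}$ in which only the identity survives. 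Your route is a bit more economical and has the merit of explicitly treating the cross-terms $(p,q)\neq(p',q')$, which the paper leaves implicit.
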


\begin{proof}
  Let $u,u'\in U^{\otimes p}$ and $v,v'\in V^{\otimes q}$. Then
  \begin{align*}
    & \Alt^d(h\perp h')((s_pu) \# (s_qv), (s_pu) \# (s_qv)) \\
    =& \Alt^d(h\perp h')(s_d(u\otimes v), s_d(u'\otimes v')) \\
    =& (h\perp h')^{\otimes d}(s_d(u\otimes v),u'\otimes v') \\
    =& \sum_{\pi\in \mathfrak{S}_d}(-1)^\pi 
      (h\perp h')^{\otimes d}(\pi(u\otimes v),u'\otimes v'),
  \end{align*}
  where we used proposition \ref{prop_shuffle_alg} for the first equality.
  We want to show that
  $(h\perp h')^{\otimes d}(\pi(u\otimes v),u'\otimes v') = 0$
  if $\pi\not\in \mathfrak{S}_{p,q}$. But if $u = x_1\otimes\cdots\otimes x_p$,
  $u' = y_1\otimes\cdots\otimes y_p$, and $v= x_{p+1}\otimes\cdots\otimes x_d$,
  $v'= y_{p+1}\otimes\cdots\otimes y_d$, then using proposition \ref{prop_goldman_commut}:
  \begin{align*}
    & (h\perp h')^{\otimes d}(\pi(u\otimes v),u'\otimes v') \\
    &= (h\perp h')^{\otimes d}((x_{\pi^{-1}(1)}\otimes \cdots \otimes x_{\pi^{-1}(d)})\pi,
      (y_1\otimes \cdots \otimes y_d)) \\
    &= \pi^{-1} (h\perp h')(x_{\pi^{-1}(1)}, y_1)\otimes \cdots \otimes
      (h\perp h')(x_{\pi^{-1}(d)},y_d)
  \end{align*}
  which is indeed zero if $\pi\not\in \mathfrak{S}_{p,q}$ since at least
  one of the $(h\perp h')(x_{\pi^{-1}(i)}, y_i)$ will be zero. Hence:
  \begin{align*}
    & \Alt^d(h\perp h')((s_pu) \# (s_qv), (s_pu) \# (s_qv)) \\
    &= \sum_{\pi\in \mathfrak{S}_{p,q}}(-1)^\pi 
      (h\perp h')^{\otimes d}(\pi(u\otimes v),u'\otimes v') \\
    &= \sum_{\pi_1\in \mathfrak{S}_p}\sum_{\pi_2\in \mathfrak{S}_q}
      (-1)^{\pi_1\pi_2} (h\perp h')^{\otimes d}(\pi_1u\otimes \pi_2v),u'\otimes v') \\
    &= h(s_pu,u')\otimes h'(s_qv,v'). \qedhere
  \end{align*}
\end{proof}

\begin{rem}
  If $d\ppq \deg(A)$, then the hermitian form $\Alt^d(\fdiag{1}_\sigma)$ induces
  an adjoint involution $\sigma^{\wedge d}$ on $\lambda^d(A)$. This is essentially
  the same definition of $\sigma^{\wedge d}$ as in \cite{BOI} (and it is indeed the
  same involution). But defining it at the level of hermitian forms instead
  of involutions allows to study the interplay with the additive structure,
  and therefore the pre-$\lambda$-ring structure.

  In general, if $B=\End_A(V)$ and $d\ppq \rdim(V)$, then the adjoint involution
  of $\Alt^d(h)$, defined on $\lambda^d(B)$, is $\sigma_h^{\wedge d}$.
\end{rem}

\subsection{The pre-$\lambda$-(semi)ring structures}

We now show that the previous constructions do yield the expected
structure on our various semirings and rings. 

\begin{thm}\label{thm_lambda_swn}
  Let $(A,\sigma)$ be an Azumaya algebra with involution over $(K,\iota)$. 
  Then the operations
  \[ \Alt^d: SW^\eps(A^{\otimes n},\sigma^{\otimes n})\to 
      SW^{\eps^d}(A^{\otimes nd},\sigma^{\otimes nd})  \]
  for $d\in \N$ and $\eps\in U(K,\iota)$ turn $\SWN$ into a rigid $\Gamma_\N$-structured
  semiring, with augmentation $\hat{\rdim}$.

  Furthermore, $(A,\sigma)\mapsto \SWN$ is a functor from $\CBrhu$ to the category of 
  $\Gamma_\N$-structured semirings with lax morphisms.
\end{thm}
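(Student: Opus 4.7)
The plan is to verify, in order: the pre-$\lambda$-semiring axioms; the augmentation property of $\hat{\rdim}$; rigidity; and functoriality under Morita equivalences. By Remark \ref{rem_def_semi_lambda}, all checks can be performed on homogeneous elements.

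The three axioms of Definition \ref{def_semi_lambda} are nearly immediate: $\Alt^d$ sends $SW^\eps(A^{\otimes n},\sigma^{\otimes n})$ to $SW^{\eps^d}(A^{\otimes dn},\sigma^{\otimes dn})$ by construction, giving degree $d\cdot(n,\eps)$; the normalizations $\Alt^0(h)=\fdiag{1}$ and $\Alt^1(h)=h$ were observed in Example \ref{ex_alt_0_1}; and since the multiplication in $\SWN$ is tensor product over $K$, the additivity identity $\Alt^d(h\perp h')=\sum_{p+q=d}\Alt^p(h)\cdot\Alt^q(h')$ is exactly Proposition \ref{prop_add_alt}. That $\hat{\rdim}$ is an augmentation, i.e.\ a $\lambda$-morphism to the binomial $\lambda$-structure on $\Z[\Gamma_\N]$, reduces on homogeneous elements to the formula $\rdim(\Alt^d(V))=\binom{\rdim(V)}{d}$, which is Proposition \ref{prop_dim_alt}.

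For rigidity, cancellativity is Witt cancellation for hermitian forms over Azumaya algebras, and a homogeneous element of zero reduced dimension has zero underlying module, yielding condition (1) of Definition \ref{def_rigid}. For condition (2), a $1$-dimensional $(V,h)\in SW^\eps(A^{\otimes n},\sigma^{\otimes n})$ satisfies $\End_{A^{\otimes n}}(V)\simeq K$, so $(V,h)$ is a hermitian Morita equivalence from $(K,\iota)$ to $(A^{\otimes n},\sigma^{\otimes n})$ in $\CBrhu$. Tensoring it with the identity equivalence of $(A^{\otimes n'},\sigma^{\otimes n'})$ yields an invertible arrow from $(A^{\otimes n'},\sigma^{\otimes n'})$ to $(A^{\otimes (n+n')},\sigma^{\otimes (n+n')})$, and the induced isomorphism on $SW^\bullet$ coincides with left-multiplication by $(V,h)$ in $\SWN$, giving graded-invertibility. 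Since $\SWN$ is then rigid, it is $\Gamma_\N$-structured with itself as the positive structure.

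Finally, functoriality: a Morita equivalence $(V,h)\colon (B,\tau)\to (A,\sigma)$ already induces a lax graded semiring morphism $\SWN[B,\tau]\to \SWN$ preserving reduced dimensions (Section \ref{sec_gw}), and hence the augmentation. Compatibility with $\Alt^e$ follows because the Goldman action of $s_e$ is defined intrinsically through the endomorphism algebra, which is preserved by Morita equivalence: for $W$ a right $B^{\otimes d}$-module, one has $\End_{A^{\otimes d}}(W\otimes_{B^{\otimes d}} V^{\otimes d})\simeq \End_{B^{\otimes d}}(W)$, under which the two natural actions of $s_e$ on $(W\otimes_{B^{\otimes d}} V^{\otimes d})^{\otimes e}\simeq W^{\otimes e}\otimes_{B^{\otimes de}} V^{\otimes de}$ coincide. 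The hermitian compatibility then follows from Definition \ref{def_alt_h} together with the Morita-invariance of adjoint involutions. The main technical work in this program is absorbed into Proposition \ref{prop_add_alt}, already established; the subtlest conceptual point is the graded-invertibility of $1$-dimensional elements, which requires identifying them with Morita equivalences in $\CBrhu$ and invoking functoriality.
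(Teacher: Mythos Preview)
Your treatment of the pre-$\lambda$ axioms, the augmentation, and rigidity is essentially identical to the paper's: you invoke the same Propositions \ref{prop_add_alt}, \ref{prop_dim_alt}, and Example \ref{ex_alt_0_1}, and your graded-invertibility argument (recognizing a $1$-dimensional form as a Morita equivalence from $(K,\iota)$) is a cleaner rephrasing of the paper's ``up to Morita equivalence, multiplication amounts to multiplication by some $\fdiag{a}_\iota$''.

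For functoriality the two approaches diverge. The paper writes down the explicit module isomorphism
\[ (v_1\#\cdots\# v_d)\otimes(u_1\otimes\cdots\otimes u_d)\longmapsto (v_1\otimes u_1)\#\cdots\#(v_d\otimes u_d) \]
and then checks by a direct expansion (using Proposition \ref{prop_goldman_commut}) that both hermitian forms evaluate to the same alternating sum over $\mathfrak{S}_d$. Your route is more conceptual: since the Goldman morphism $K[\mathfrak{S}_e]\to C^{\otimes e}$ depends only on the Azumaya algebra $C=\End_{A^{\otimes d}}(W\otimes_{B^{\otimes d}}V^{\otimes d})\simeq \End_{B^{\otimes d}}(W)$, the action of $s_e$ passes through the $W^{\otimes e}$-factor alone, giving the module identification without unpacking shuffles. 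This is correct and arguably more transparent; it is exactly the Morita-compatibility of the left Goldman action mentioned after Proposition \ref{prop_goldman_commut}.

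The one place where your write-up is too compressed is the sentence ``the hermitian compatibility then follows from Definition \ref{def_alt_h} together with the Morita-invariance of adjoint involutions''. Invariance of adjoint involutions only pins the form up to scaling; what you actually need is the monoidal compatibility $(g\circ h)^{\otimes e}\simeq g^{\otimes e}\circ h^{\otimes e}$ under the canonical identification $(W\otimes_B V)^{\otimes e}\simeq W^{\otimes e}\otimes_{B^{\otimes e}}V^{\otimes e}$. Once that is stated, the formula $\Alt^e(h)(s_ex,s_ey)=h^{\otimes e}(s_ex,y)$ transfers verbatim, since both $s_e$ and $h^{\otimes e}$ have been shown to commute with the Morita transfer. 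With that clarification your argument is complete, and it has the advantage over the paper's of avoiding the coordinate computation entirely.
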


\begin{proof}
  The fact that the $\Alt^d$ define a graded pre-$\lambda$-semiring 
  structure follows simply from Proposition \ref{prop_add_alt} and
  Example \ref{ex_alt_0_1}.

  It is easy to see that $\hat{\rdim}$ is a graded semiring morphism (see also
  \cite[Prop 4.8]{Moi2}), so 
  we need to check that it is a $\lambda$-morphism, which is exactly 
  the content of Proposition \ref{prop_dim_alt}. To show that 
  $\SWN$ is rigid, note that by definition of the reduced
  dimension, if $h\in SW^\eps(A^{\otimes n},\sigma^{\otimes n})$ satisfies
  $\rdim(h)=0$, then $h=0$; moreover, the line elements in 
  $\SWN$ are exactly the $1$-dimensional hermitian forms in 
  $SW^\eps(A^{\otimes n},\sigma^{\otimes n})$ for the $n$ such that 
  $A^{\otimes n}$ is split, and clearly such elements are quasi-invertible:
  up to Morita equivalence, multiplication by such an element amounts to
  multplication by some $\fdiag{a}_\iota\in SW(K,\iota)$, which clearly induces an
  isomorphism of Witt semigroups.
  
  Only the functoriality is left to prove. Let $f:(B,\tau)\to (A,\sigma)$ be 
  a morphism in $\CBrhu$. We already know that the induced map $f_*$ on the 
  $\hat{SW}_\N^\bullet$ is a graded semiring isomorphism, which preserves the reduced 
  dimension; it remains to check that it preserves the $\lambda$-operations.
  So let $(V,h)$ be an $\eps$-hermitian module over 
  $(B^{\otimes n},\tau^{\otimes n})$, and let $d\in \N$. What we want to 
  prove is then 
  \begin{equation}\label{eq_fonct_alt}
    f^{\otimes nd}_*(\Alt^d(h)) = \Alt^d(f^{\otimes n}_*(h)).
  \end{equation}
  Replacing $f$ by $f^{\otimes n}$ if necessary, it is enough to treat 
  the case $n=1$.

  Let $(U,g)$ be the hermitian space over $(A,\sigma)$ which defines
  to $f$. Then the underlying module on the left-hand side of 
  (\ref{eq_fonct_alt}) is
  \[ (s_dV^{\otimes d})\otimes_{B^{\otimes d}}U^{\otimes d}, \]
  and on the right-hand side:
  \[ s_d(V\otimes_B U)^{\otimes d}. \]
  There is an obvious bimodule isomorphism between the two, given by
  \[ (v_1\# \dots \# v_d)\otimes (u_1\otimes\dots \otimes u_d) \mapsto
    (v_1\otimes u_1)\# \dots \# (v_d\otimes u_d), \]
  and if we look at the definitions of $g^{\otimes d}\circ \Alt^d(h)$
  and $\Alt^d(g\circ h)$, we see that we need to prove that for any
  $u_i,u'_i\in U$ and $v_i,v'_i\in V$,
  \[ g^{\otimes d}(u_1\otimes\dots\otimes u_d, 
  h^{\otimes d}(v_1\otimes\dots\otimes v_d,v'_1\#\dots\# v'_d)(u'_1\otimes\dots\otimes u'_d))\]
  is equal to
  \[ (g\circ h)^{\otimes d}((v_1\otimes u_1)\otimes\dots\otimes (v_d\otimes u_d),
  (v'_1\otimes u'_1)\#\dots\# (v'_d\otimes u'_d)). \]
  It is then a straightforward computation, using proposition 
  \ref{prop_goldman_commut}, that both expressions are equal to
  \[ \sum_{\pi\in\mathfrak{S}_d} (-1)^\pi
    \left[ \bigotimes_i g(u_i,h(v_i,v'_{\pi^{-1}(i)})u'_{\pi^{-1}(i)}) \right]\pi. \qedhere \]
\end{proof}

\begin{coro}\label{cor_lambda_Z}
  The graded semirings $\SWZ$, $\GWN$, and $\GWZ$ are naturally $\Gamma_\N$-
  and $\Gamma_\Z$-structured semirings, such that $\SWN$ (resp. $\SWZ$) is
  the positive structure on $\GWN$ (resp. $\GWZ$). They define functors
  from $\CBrhu$ to the category of structured semirings with lax morphisms, 
  such that the square
  \[ \begin{tikzcd}
    \SWN \arrow[hook]{r} \arrow[hook]{d}  & \SWZ \arrow[hook]{d} \\
    \GWN \arrow[hook]{r} & \GWZ 
  \end{tikzcd} \] 
  is a commutative diagram of structured semirings, natural over $\CBrhu$.
\end{coro}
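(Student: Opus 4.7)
The plan is to extend the result of Theorem \ref{thm_lambda_swn} to $\SWZ$ directly, and then invoke Propositions \ref{prop_lambda_groth} and \ref{prop_pos_groth} to obtain the structures on $\GWN$ and $\GWZ$ together with the positive structures on them; naturality of the diagram then follows from the functoriality already established in Theorem \ref{thm_lambda_swn}.

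First I would treat $\SWZ$ as the gluing of $\SWN[A,\sigma]$ (on components of non-negative $\Z$-degree) with $\SWN[\tw{A},\tw{\sigma}]$ (on components of non-positive $\Z$-degree) along their common degree-$0$ piece $SW^\bullet(K,\iota)$, and define $\Alt^d$ on each homogeneous element using Theorem \ref{thm_lambda_swn} applied to whichever half is relevant. The two definitions must agree on $SW^\bullet(K,\iota)$, which is immediate since $\Alt^d$ applied to an $\eps$-hermitian form over $(K,\iota)$ yields an $\eps^d$-hermitian form over $(K,\iota)$ that is intrinsic to $(K,\iota)$ and does not care which ambient algebra we view it as sitting inside. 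To confirm that this homogeneously defined system is a valid $\Gamma_\Z$-graded pre-$\lambda$-semiring structure, I would invoke Proposition \ref{prop_check_n}: any monoid morphism $\phi\colon\N\to \Gamma_\Z$ has image either in $\Gamma_\N$ or in its $\Z$-negative counterpart (depending on the sign of the first coordinate of $\phi(1)$), so $\phi^*(\SWZ)$ coincides with the corresponding pullback of $\SWN[A,\sigma]$ or of $\SWN[\tw{A},\tw{\sigma}]$, and the axioms of Definition \ref{def_semi_lambda} already hold on each such pullback by Theorem \ref{thm_lambda_swn}.

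Next, rigidity of $\SWZ$ and the fact that $\hat{\rdim}\colon\SWZ\to \Z[\Gamma_\Z]$ is a $\lambda$-augmentation are verified exactly as in the proof of Theorem \ref{thm_lambda_swn}: the reduced dimension is homogeneously additive and multiplicative, it commutes with $\Alt^d$ by Proposition \ref{prop_dim_alt}, a homogeneous element of vanishing reduced dimension is zero, and $1$-dimensional homogeneous elements are quasi-invertible by the same Morita-equivalence argument (multiplication by a line element amounts to multiplication by some $\fdiag{a}_\iota\in SW(K,\iota)$). With $\SWN$ and $\SWZ$ now established as rigid augmented graded pre-$\lambda$-semirings, Proposition \ref{prop_lambda_groth} transfers the $\lambda$-structure and augmentation to their Grothendieck rings $\GWN$ and $\GWZ$, and Proposition \ref{prop_pos_groth} produces the claimed positive structures.

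Finally, for functoriality, a morphism $f\colon(B,\tau)\to (A,\sigma)$ in $\CBrhu$ is represented by an $\eps$-hermitian bimodule $(V,h)$, which induces a corresponding bimodule from $(\tw{B},\tw\tau)$ to $(\tw{A},\tw\sigma)$; running the last part of the proof of Theorem \ref{thm_lambda_swn} on each of the two halves shows that the induced map on $\SWZ$ is a lax $\lambda$-morphism, and the Grothendieck-ring construction transports this to $\GWN$ and $\GWZ$. Commutativity of the square of structured semirings is tautological, since every arrow is either the inclusion of a graded sub-semiring or the canonical map into a Grothendieck ring, and each of these is natural in $(A,\sigma)$ by construction. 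The only genuinely substantive point in the whole argument is the agreement of the two $\lambda$-structures on the overlap $SW^\bullet(K,\iota)\subset\SWZ$; once this is settled via the pullback characterisation of Proposition \ref{prop_check_n}, the rest is essentially bookkeeping on the previously developed machinery.
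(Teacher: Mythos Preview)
Your overall approach matches the paper's: glue the $\lambda$-operations on $\SWZ$ from the two halves $\SWN$ and $\SWN[\tw A,\tw\sigma]$, invoke Proposition \ref{prop_check_n} to check the axioms, then apply Propositions \ref{prop_lambda_groth} and \ref{prop_pos_groth} to obtain the structures on the Grothendieck rings; functoriality and the square come along for free.

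There is, however, a genuine gap in your rigidity argument for $\SWZ$. You assert that line elements are graded-invertible ``by the same Morita-equivalence argument'' as in Theorem \ref{thm_lambda_swn}, but the paper explicitly flags this as the one non-obvious point: a line element $x$ of $\Z$-degree $d\geqslant 0$ must act invertibly not only on components of non-negative $\Z$-degree (which is all the $\SWN$ argument provides) but also on components of \emph{negative} $\Z$-degree, where the product in $\SWZ$ is built from the canonical equivalence $(A\otimes_K \tw A,\sigma\otimes\tw\sigma)\to (K,\iota)$ and not merely from tensor products. Saying ``multiplication by $x$ amounts to multiplication by some $\fdiag{a}_\iota$'' presupposes that the relevant Morita identifications are compatible with this mixed-sign product, which you have not verified.

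The paper circumvents the direct check: since $\rdim(x)=1$ forces $A^{\otimes d}$ to be split, one chooses a Morita equivalence $(A^{\otimes d},\sigma^{\otimes d})\simeq (K,\iota)$, obtaining a graded semiring morphism $\SWZ[K,\iota]\to \SWZ$ sending some $1$-dimensional $y$ to $x$. By Proposition \ref{prop_gw_mixte_split}, $\SWZ[K,\iota]\simeq SW^\bullet(K,\iota)[\Z]$, where $1$-dimensional elements are honestly \emph{invertible} (the $\Z$-factor is a group). Invertibility passes along ring morphisms, so $x$ is invertible in $\SWZ$, hence a fortiori graded-invertible. You should supply this argument (or an equivalent one) rather than asserting that the $\SWN$ reasoning carries over unchanged.
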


\begin{proof}
  Recall that $\SWZ$ is simply a gluing of $\SWN$ and $\SWN[\tw{A}, \tw{\sigma}]$, 
  and the $\lambda$-operations therefore carry over to
  $\SWZ$, and Proposition \ref{prop_check_n} shows that this also defines a 
  graded pre-$\lambda$-ring structure on $\SWZ$. We can also use Proposition 
  \ref{prop_check_n} to show that the canonical map $\hat{\rdim}: \SWZ\to \N[\Z]$
  is a $\lambda$-morphism. 

  We need to show that $\SWZ$ is rigid. The fact that $\rdim(x)=0$ implies
  $x=0$ is just as clear as for $\SWN$. Let $x\in SW^\eps(A^{\otimes d},\sigma^{\otimes d})$ 
  be homogeneous of $\lambda$-dimension $1$. We may assume that $d\pgq 0$,
  otherwise we apply the same reasoning to $(\tw{A},\tw{\sigma})$.
  It is not as obvious as for $\SWN$ 
  that $x$ is graded-invertible, as multiplication by $x$ must also induce isomorphisms
  for the components of negative $\Z$-degree. But the fact that $\SWN$ is
  rigid shows that $\rdim(x)=1$ (Proposition \ref{prop_pos_dim}) and therefore 
  $A^{\otimes d}$ is split. Choosing an
  equivalence between $(A^{\otimes d},\sigma^{\otimes d})$ and $(K,\iota)$, we
  see that $x$ is the image by a ring morphism $\SWZ[K,\iota]\to \SWZ$ of
  some $1$-dimensional element $y\in \SWZ[K,\iota]$. Using Proposition \ref{prop_gw_mixte_split},
  we see that $y$ is actually invertible in $\SWZ[K,\iota]$, because $1$-dimensional
  elements in $SW^\bullet(K,\iota)$ are invertible. This shows that $x$ is invertible,
  and in particular graded-invertible.

  The fact that this $\Gamma_\Z$-structured semiring is functorial over $\CBrhu$
  follows directly from the corresponding statement for $\SWN$ and 
  $\SWN[\mbox{}^\iota A, \mbox{}^\iota \sigma]$.

  We get the structure on $\GWN$ and $\GWZ$ simply by applying Proposition \ref{prop_pos_groth},
  and functoriality follows immediately from the case of $\SWN$ and $\SWZ$, and the
  functoriality of Grothendieck rings. The statement about the commutative square
  is clear by construction.
\end{proof}

\begin{rem}
  Note that since $\GWN$ is a graded pre-$\lambda$-ring, it is
  in particular an ungraded pre-$\lambda$-ring, but $\SWN$ is
  \emph{not} a positive structure in this context, because the line elements
  are not invertible (only graded-invertible). On the other hand, $\SWZ$
  is an ungraded positive structure on $\GWZ$, as the grading is done
  over a group, so graded-invertible elements are invertible.
\end{rem}

\begin{coro}\label{cor_lambda_zd}
  If $\iota=\Id_K$, $\SW$ and $\GW$ are naturally $\Gamma$-structured
  semirings, where $\SW$ is rigid and is the positive structure on $\GW$.
  Furthermore, they define functors from $\CBrhu[K,\Id]$ to
  the category of $\Gamma$-structured semirings with lax morphisms, such
  that the square
  \[ \begin{tikzcd}
    \SWZ \arrow[twoheadrightarrow]{r} \arrow[hook]{d}  & \SW \arrow[hook]{d} \\
    \GWZ \arrow[twoheadrightarrow]{r} & \GW
  \end{tikzcd} \] 
  is a natural commutative diagram of structured semirings.
\end{coro}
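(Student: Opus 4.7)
The plan is to reduce everything to Corollary \ref{cor_lambda_Z} via the contraction $\SWZ \twoheadrightarrow \SW$ provided in Section \ref{sec_gw} (which exists precisely because $\iota = \Id_K$ forces the canonical isomorphism $(A,\sigma)^{\otimes 2} \simeq (K,\Id)$ in $\CBrhu$). The key tool is Proposition \ref{prop_contr_lambda}, which says that along any surjective $\phi: M \to N$, structures of augmented graded pre-$\lambda$-semiring and rigidity are in bijective correspondence between source and target of a contraction.

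First I would invoke the contraction $\SWZ \to \SW$ along the canonical projection $\Gamma_\Z = \Z \times U(K,\Id) \twoheadrightarrow \Zd \times \mu_2(K) = \Gamma$. By Corollary \ref{cor_lambda_Z}, $\SWZ$ is a rigid $\Gamma_\Z$-structured semiring, so Proposition \ref{prop_contr_lambda} immediately endows $\SW$ with the unique compatible structure of $\Gamma$-structured semiring (with augmentation the total reduced dimension) and guarantees that this structure is again rigid. Then Proposition \ref{prop_pos_groth} shows that $\SW$ is a positive structure on its Grothendieck ring, which is canonically identified with $\GW$ (the Grothendieck ring construction being compatible with the contraction, since $G$ commutes with direct sums of abelian monoids).

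For functoriality, any morphism $f:(B,\tau)\to(A,\sigma)$ in $\CBrhu[K,\Id]$ induces a lax morphism of $\Gamma_\Z$-structured semirings $\SWZ[B,\tau]\to\SWZ$ by Corollary \ref{cor_lambda_Z}. Under the contractions, this descends uniquely to a lax morphism $\SW[B,\tau]\to\SW$ of $\Gamma$-structured semirings: the compatibility with $\lambda$-operations and the augmentation is a direct consequence of the bijective correspondence in Proposition \ref{prop_contr_lambda} applied to the corresponding contractions on both source and target. The commutativity of the square then follows by construction, as all four arrows are determined by the same underlying alternating power operations at the level of hermitian forms, and the vertical arrows (Grothendieck completions) and horizontal arrows (contractions / inclusions) commute already at the level of graded monoids.

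I do not expect any serious obstacle here, since all the heavy lifting was done in Theorem \ref{thm_lambda_swn} and Corollary \ref{cor_lambda_Z}; the only point requiring a bit of care is checking that the contraction $\SWZ \to \SW$ really is a contraction of augmented graded pre-$\lambda$-semirings in the sense of Proposition \ref{prop_contr_lambda}, i.e. that it induces isomorphisms on each homogeneous component. This was established in Section \ref{sec_gw}: the contraction identifies the two copies of $SW^\bullet(A,\sigma)$ in degrees $\pm 1$ of $\SWZ$, which match up canonically via the Morita equivalence $(|A|_\sigma, T_\sigma)$, so each degree of $\SW$ receives exactly one copy of a component of $\SWZ$.
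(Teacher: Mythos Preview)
Your proposal is correct and follows essentially the same approach as the paper: transfer the structure via a contraction using Proposition \ref{prop_contr_lambda}, then invoke Proposition \ref{prop_pos_groth} for the Grothendieck ring. The only minor difference is that you contract from $\SWZ$ (along $\Gamma_\Z \twoheadrightarrow \Gamma$) whereas the paper contracts from $\SWN$ (along $\Gamma_\N \twoheadrightarrow \Gamma$); both are contractions to $\SW$ by the commutative triangle in Section \ref{sec_gw}, so either works, and your choice has the slight advantage of being the very arrow appearing in the square you are asked to establish.
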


\begin{proof}
  Since $\SWN\to \SW$ is a contraction, we can transfer the structure to $\SW$
  as in Proposition \ref{prop_contr_lambda}, which shows that it is rigid. We
  get the structure on $\GW$ either from the contraction $\GWN\to \GW$, or
  as the Grothendieck ring of $\SW$. It is straightfoward that these yield the
  same structure. The functoriality is easily established as in Corollary 
  \ref{cor_lambda_Z}, and the commutative square follows from the definition
  of the structure.
\end{proof}

If $(V,h)$ is an $\eps$-hermitian module over $(A,\sigma)$, its image by the
operation $\lambda^d$ in $\GW$ will be denoted
\[  (\Lambda^d(V),\lambda^d(h))\in SW^{\eps^d}(A^{\otimes r},\sigma^{\otimes r}),  \]
where $r\in \{0,1\}$ has the same parity as $d$. 

\begin{rem}\label{rem_lambda_class}
  Note that, unlike $(\Alt^d(V),\Alt^d(h))$,
  which is a well-defined hermitian module, only the isometry class of 
  $(\Lambda^d(V),\lambda^d(h))$ is well-defined, because it is constructed
  from a Morita equivalence. This distinction is a reason why it is often more 
  convenient to prove things in
  $\SWN$ first, where we can work with actual modules, and transfer the
  results to $\GW$ by Morita equivalence. But in Section \ref{sec_ralt}
  we construct an explicit representative $(\RAlt^{d}(V),\RAlt^d(h))$
  of the isometry class $(\Lambda^d(V),\lambda^d(h))$.
  
  Also note that the isomorphism class of $\Lambda^d(V)$ as a bimodule
  depends on $\sigma$ and not only $V$, but does not depend on $h$
  (see the constructions of $\RAlt^d(V)$ in Section \ref{sec_ralt}).
  Furthermore, if $d>\rdim_A(V)$, then $\Lambda^d(V)=0$.
\end{rem}

\begin{ex}\label{rem_lambda_split_grad}
  We know from Proposition \ref{prop_gw_mixte_split} that as a ring
  $\GW[K,\Id]\simeq GW^\bullet(K,\Id)[\Zd]$. We also know from Proposition 
  \ref{prop_lambda_graded_group_ring} that $GW^\bullet(K,\Id)[\Zd]$ has a canonical 
  $\Gamma$-graded pre-$\lambda$-ring structure,
  since by Example \ref{ex_gw_lambda} $GW^\bullet(K,\Id)$ is a $\mu_2(K)$-graded
  pre-$\lambda$-ring.
  Then actually $\tld{GW}(K,\Id)\simeq GW^\pm[\Zd]$ as $\Gamma$-graded
  pre-$\lambda$-rings.
\end{ex}

\begin{rem}\label{rem_lambda_fonct}
  If $f:(B,\tau)\to (A,\sigma)$ is a morphism in $\CBrh$,
  corresponding to the $\eps$-hermitian form $h$,
  then by definition $f_*(\fdiag{1}_\tau)=h$, and
  since $f_*$ is compatible with the $\lambda$-operations,
  we have $\lambda^d(h) = f_*(\lambda^d(\fdiag{1}_\tau))$. Thus
  to be able to compute the exterior powers of any $\eps$-hermitian
  form, we just need to be able to do the computation in the special
  case of diagonal forms $\fdiag{1}_\tau$ for any involution $\tau$.
\end{rem}

\section{Reduced alternating power and trace forms}\label{sec_ralt}

Given a hermitian space $(V,h)$ over $(A,\sigma)$, we have a reasonably
explicit description of $(\Alt^d(V), \Alt^d(h))$ in terms of the action of 
$\mathfrak{S}_d$ on $V^{\otimes d}$. When $\sigma$ is of the first kind, 
we are more interested on $\lambda^d(h)$, which is defined from $\Alt^d(h)$
through a Morita equivalence, and only exists as an isometry class, not
a concretely defined space (see Remark \ref{rem_lambda_class}).

In this section, we assume always assume that $\sigma$ is of the
first kind (so $\iota=\Id_K$), and we will define explicit spaces called "reduced 
alternating powers" which give representative of the isometry classes
$(\Lambda^d(V), \lambda^d(h))$, essentially by going through the Morita
equivalence that defines them. When $d$ is even this is a quadratic space, and
when $d$ is odd this is a hermitian space. Since the behaviour is a 
little different, those two cases are treated separately, though the basic 
idea is the same in both cases.

In the special case that $(V,h)$ is $(A,\fdiag{1}_\sigma)$ we give a simplified
construction, which is actually universal for even powers, 
and which shows in particular that if the characteristic of the base field is large
enough, $\lambda^{2d}(h)$ can be realized up to a scalar as a subform of an
involution trace form (see Remark \ref{rem_ralt_even_1}).

\subsection{Reduced alternating powers of even degree}

\subsubsection*{Reduced tensor powers of modules}

Let $(A,\sigma)$ be an Azumaya algebra with involution of the first kind over $K$.
Let $V$ be a right $A$-module and let $d\in \N$. We define 
\begin{equation}
  V^{[2d,\sigma]} = V^{\otimes d}\otimes_{A^{\otimes d}} \mbox{}^{\sigma}V^{\otimes d}
\end{equation}
where $\mbox{}^{\sigma}V$ is the left $A$-module which
is $V$ as a vector space, with the action $a\cdot v=v\cdot \sigma(a)$.
Note that $(\mbox{}^{\sigma}V)^{\otimes d}$ and $\mbox{}^{\sigma^{\otimes d}}(V^{\otimes d})$
are identical as $A^{\otimes d}$-modules, so our notation can be taken
to mean either one.

We call $V^{[2d,\sigma]}$ the $2d$th \emph{reduced tensor power} of $V$, which depends on 
$\sigma$. When $\sigma$ is understood from the context, we just write $V^{[2d]}$. 

If $x,y\in V^{\otimes d}$, we write $x\odot y$ for the element in $V^{[2d]}$
defined by $x\otimes y\in V^{\otimes d}\otimes_{A^{\otimes d}} \mbox{}^{\sigma}V^{\otimes d}$.
This is meant to distinguish this element from $x\otimes y\in V^{\otimes 2d}$.
By construction, as a $K$-vector space $V^{[2d]}$ is a quotient of $V^{\otimes 2d}$,
with the quotient map given by $x\otimes y\mapsto x\odot y$.

\begin{lem}\label{lem_isom_red_tens_pow}
  Let $(A,\sigma)$ be an Azumaya algebra with involution over $(K,\Id)$,
  and let $V$ be a right $A$-module.
  There is a natural isomorphism of $K$-vector spaces between $V^{[2d, \sigma]}$
  and $V^{\otimes 2d}\otimes_{A^{\otimes 2d}} |A^{\otimes d}|_{\sigma^{\otimes d}}$,
  given in both directions by $x\odot y\mapsto (x\otimes y)\otimes 1$ and
  $(x\otimes y)\otimes a \mapsto (xa)\odot y = x\odot (y\sigma^{\otimes d}(a))$.
\end{lem}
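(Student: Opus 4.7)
The plan is to exhibit the two maps given in the statement and verify they are well-defined $K$-linear inverses of each other; this is essentially bookkeeping with the various bimodule structures.

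First I would construct the map $\psi: V^{[2d,\sigma]} \to V^{\otimes 2d}\otimes_{A^{\otimes 2d}} |A^{\otimes d}|_{\sigma^{\otimes d}}$ sending $x \odot y$ to $(x\otimes y)\otimes 1$. To see this is well-defined, I would start from the $K$-bilinear map $V^{\otimes d}\times V^{\otimes d}\to V^{\otimes 2d}\otimes_{A^{\otimes 2d}} |A^{\otimes d}|_{\sigma^{\otimes d}}$ sending $(x,y)$ to $(x\otimes y)\otimes 1$, and check that it is $A^{\otimes d}$-balanced with respect to the left action $a\cdot y = y\sigma^{\otimes d}(a)$ on ${}^{\sigma}V^{\otimes d}$. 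This reduces to verifying that $(xa\otimes y)\otimes 1$ and $(x\otimes y\sigma^{\otimes d}(a))\otimes 1$ coincide in the target, which follows by pushing $a\otimes 1$ (resp. $1\otimes \sigma^{\otimes d}(a)$) across the tensor product and using that the action of $|A^{\otimes d}|_{\sigma^{\otimes d}}$ involves $\sigma^{\otimes d}$ on the second factor.

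Next I would construct the inverse $\phi$ sending $(x\otimes y)\otimes a$ to $(xa)\odot y$. Here I would first define an $A^{\otimes 2d}$-balanced map $V^{\otimes 2d}\times |A^{\otimes d}|_{\sigma^{\otimes d}}\to V^{[2d,\sigma]}$ via $((x\otimes y),a)\mapsto (xa)\odot y$, using the defining relation in $V^{[2d,\sigma]}$ to prove the identity $(xa)\odot y = x\odot(y\sigma^{\otimes d}(a))$ claimed in the statement (this is precisely the $A^{\otimes d}$-balancedness of $\odot$). The balancedness check then amounts to: for $a_1\otimes a_2\in A^{\otimes 2d}$ and $c\in |A^{\otimes d}|_{\sigma^{\otimes d}}$, showing that $(xa_1c)\odot(ya_2)$ equals $(xa_1c\sigma^{\otimes d}(a_2))\odot y$, which is again immediate from the defining relation of $\odot$ applied to the second slot.

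Finally I would check $\psi\circ\phi = \Id$ and $\phi\circ\psi = \Id$ on generators, both of which are one-line computations using $x\odot y = (x\cdot 1)\odot y$ and $(x\otimes y)\otimes 1\mapsto (x\cdot 1)\odot y = x\odot y$ for one composition, and $(x\otimes y)\otimes a\mapsto (xa)\odot y\mapsto (xa\otimes y)\otimes 1 = (x\otimes y)\otimes a$ for the other. The only step requiring any real care is tracking which factor of $A^{\otimes 2d}$ acts via $\sigma^{\otimes d}$; there is no conceptual obstacle, only the need to keep the left/right conventions for ${}^{\sigma}V^{\otimes d}$ and $|A^{\otimes d}|_{\sigma^{\otimes d}}$ consistent throughout.
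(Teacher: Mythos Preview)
Your proposal is correct and follows essentially the same approach as the paper: both construct the two maps, verify well-definedness via the balancedness conditions (with the key identity $(xa\otimes y)\otimes 1 = (x\otimes y)\otimes a = (x\otimes y\sigma^{\otimes d}(a))\otimes 1$ coming from the twisted module structure of $|A^{\otimes d}|_{\sigma^{\otimes d}}$), and check they are mutual inverses on generators. The paper's proof is simply a terser version of what you wrote.
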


\begin{proof}
  It is a straightforward check that those maps are well-defined and mutually
  inverse. For instance, if $x,y\in V^{\otimes d}$ and $a\in A^{\otimes d}$,
  $(xa)\odot y$ and $x\odot (y\sigma^{\otimes d}(a))$ are respectively sent to 
  \[ (xa\otimes y)\otimes 1 = ((x\otimes y)\cdot (a\otimes 1))\otimes 1 = (x\otimes y)\otimes a \]
  and
  \[ (x\otimes y\sigma^{\otimes d}(a))\otimes 1 = 
  ((x\otimes y)\cdot (1\otimes \sigma^{\otimes d}(a)))\otimes 1 = (x\otimes y)\otimes a \]
  which are indeed equal, remembering the module structure of 
  $|A^{\otimes d}|_{\sigma^{\otimes d}}$.
\end{proof}

\begin{rem}
  We chose to define $V^{[2d,\sigma]}$ in such a way that by definition 
  $V^{[2d,\sigma]}= (V^{\otimes d})^{[2,\sigma^{\otimes d}]}$. We could also
  have chosen $V^{[2d,\sigma]}= (V^{[2,\sigma^{\otimes d}]})^{\otimes d}$,
  in which case in Lemma \ref{lem_isom_red_tens_pow} we would have replaced
  $|A^{\otimes d}|_{\sigma^{\otimes d}}$ by $|A|_{\sigma}^{\otimes d}$.
  Since those two $A^{\otimes 2d}$-modules are isomorphic, this does not
  change anything, and we found our convention more convenient overall.
\end{rem}

\subsubsection*{Reduced tensor powers of hermitian forms}

Now let us assume that $V$ carries an $\eps$-hermitian form $h$ over $(A,\sigma)$.
We define the symmetric bilinear form $h^{[2d]}$ on $V^{[2d]}$ by 
\begin{equation}
  h^{[2d]}(x\odot y, x'\odot y') = \Trd_{A^{\otimes d}}(\sigma^{\otimes d}(h^{\otimes d}(x,x'))h^{\otimes d}(y,y'))
\end{equation}
with $x,x',y,y'\in V^{\otimes d}$.

\begin{prop}\label{prop_red_power_herm}
  Let $(A,\sigma)$ be an Azumaya algebra with involution over $(K,\Id)$,
  and let $(V,h)$ be an $\eps$-hermitian module over $(A,\sigma)$.
  Under the vector space isomorphism given in Lemma \ref{lem_isom_red_tens_pow}, the
  bilinear space $(V^{[2d]},h^{[2d]})$ is identified with the composition
  \[ (|A^{\otimes d}|_{\sigma^{\otimes d}},T_{\sigma^{\otimes d}})\circ (V^{\otimes 2d},h^{\otimes 2d}). \]   

  In particular, the isometry class of $(V^{[2d]},h^{[2d]})$, as an element of $SW(K)$, 
  is the $2d$th power in $\SW$ of the isometry class of $(V,h)$ in $SW^{\eps}(A,\sigma)$.
\end{prop}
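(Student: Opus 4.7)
The plan is to unpack the composition formula from Definition \ref{def_br_h} applied to $(V^{\otimes 2d}, h^{\otimes 2d})$ and $(|A^{\otimes d}|_{\sigma^{\otimes d}}, T_{\sigma^{\otimes d}})$, and to match the resulting form with $h^{[2d]}$ via the identification of Lemma \ref{lem_isom_red_tens_pow}.

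By Definition \ref{def_br_h}, the composition has underlying module $V^{\otimes 2d} \otimes_{A^{\otimes 2d}} |A^{\otimes d}|_{\sigma^{\otimes d}}$ and bilinear form
\[ \bigl((x \otimes y) \otimes a,\, (x' \otimes y') \otimes a'\bigr) \longmapsto T_{\sigma^{\otimes d}}\bigl(a,\, h^{\otimes 2d}(x \otimes y, x' \otimes y')\cdot a'\bigr). \]
Lemma \ref{lem_isom_red_tens_pow} identifies $(x \otimes y) \otimes 1$ with $x \odot y \in V^{[2d]}$, so it suffices to evaluate this form when $a = a' = 1$. Expanding $h^{\otimes 2d}(x \otimes y, x' \otimes y') = h^{\otimes d}(x,x') \otimes h^{\otimes d}(y,y')$ and using the twisted action \eqref{eq_twisted_action} of $A^{\otimes d} \otimes_K A^{\otimes d}$ on $|A^{\otimes d}|_{\sigma^{\otimes d}}$, one obtains
\[ \Trd_{A^{\otimes d}}\!\bigl(h^{\otimes d}(x,x') \cdot \sigma^{\otimes d}(h^{\otimes d}(y,y'))\bigr). \]
Since $\sigma$ is of the first kind we have $\Trd_{A^{\otimes d}} \circ \sigma^{\otimes d} = \Trd_{A^{\otimes d}}$, which combined with the cyclic invariance of the reduced trace rewrites the above as
\[ \Trd_{A^{\otimes d}}\!\bigl(\sigma^{\otimes d}(h^{\otimes d}(x,x'))\cdot h^{\otimes d}(y,y')\bigr) = h^{[2d]}(x \odot y, x' \odot y'), \]
establishing the first identification.

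For the ``in particular'' clause, recall that when $\iota = \Id_K$ the product in $\SW$ of forms over $(A, \sigma)$ is obtained through the canonical isomorphism $(A, \sigma)^{\otimes 2} \to (K, \Id)$ in $\CBrhu$, which is represented by $(|A|_\sigma, T_\sigma)$. Iterating, the $2d$th power of $[V, h]$ is represented by composing $(V^{\otimes 2d}, h^{\otimes 2d})$ with the canonical Morita equivalence $(A^{\otimes 2d}, \sigma^{\otimes 2d}) \to (K, \Id)$, which is realized by $(|A^{\otimes d}|_{\sigma^{\otimes d}}, T_{\sigma^{\otimes d}})$ (the $d$-fold tensor power of the basic equivalence). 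The first part then identifies this class with $[V^{[2d]}, h^{[2d]}]$ in $SW(K)$.

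The only real obstacle is bookkeeping: the twisted bimodule structure on $|A^{\otimes d}|_{\sigma^{\otimes d}}$ and the order conventions in Definition \ref{def_br_h} must be tracked carefully. Once these are unfolded, the verification reduces to standard manipulations of the reduced trace enabled by $\sigma$ being of the first kind.
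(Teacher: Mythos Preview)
Your proof is correct and follows essentially the same approach as the paper: unfold the composition form of Definition \ref{def_br_h} on elements $(x\otimes y)\otimes 1$, use the twisted action \eqref{eq_twisted_action} to compute $h^{\otimes 2d}(x\otimes y,x'\otimes y')\cdot 1$, and match with the defining formula for $h^{[2d]}$. The only difference is cosmetic: you make the trace manipulation $\Trd(a\,\sigma(b))=\Trd(\sigma(a)\,b)$ explicit via $\Trd\circ\sigma^{\otimes d}=\Trd$ and cyclicity, whereas the paper absorbs this into a single step.
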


\begin{proof}
  By definition, the transfer of the composition $T_{\sigma^{\otimes d}}\circ h^{\otimes 2d}$
  to $V^{[2d]}$ is given by 
  \begin{align*}
    (x\odot y, x'\odot y') &\mapsto T_{\sigma^{\otimes d}}(1, h^{\otimes 2d}(x\otimes y,x'\otimes y')\cdot 1) \\
       &= T_{\sigma^{\otimes d}}(1, h^{\otimes d}(x,x')\sigma^{\otimes d}(h^{\otimes d}(y,y'))) \\
       &= \Trd_{A^{\otimes d}}(\sigma^{\otimes d}(h^{\otimes d}(x,x'))h^{\otimes d}(y,y')).
  \end{align*}
  By construction of $\SW$, the $2d$th power of the isometry class of $(V,h)$ is precisely
  the isometry class of $T_{\sigma^{\otimes d}}\circ h^{\otimes 2d}$.
\end{proof}

\subsubsection*{The canonical action of the symmetric group}

We see from Lemma \ref{lem_isom_red_tens_pow} that $V^{[2d]}$ is naturally a quotient
of $V^{\otimes 2d}$ as a vector space, and that if $B=\End_A(V)$ it can be seen as 
a quotient as left $B^{\otimes 2d}$-modules. In particular, it has a canonical
structure of $K[\mathfrak{S}_{2d}]$-module, and we still refer to this
as the Goldman action of $K[\mathfrak{S}_{2d}]$ (or $\mathfrak{S}_{2d}$)
on $V^{[2d]}$.
We wish to describe the restriction of this action to two specific subgroups of $\mathfrak{S}_{2d}$.

First the group $\mathfrak{S}_d\times \mathfrak{S}_d$ can be embedded in $\mathfrak{S}_{2d}$ by
identifying it with the Young subgroup $\mathfrak{S}_{d,d}$.

\begin{lem}\label{lem_action_young}
  Let $(A,\sigma)$ be an Azumaya algebra with involution over $(K,\Id)$, and
  let $V$ be a right $A$-module.
  Given $g,h\in \mathfrak{S}_d$ and $x,y\in V^{\otimes d}$, the action of 
  $\mathfrak{S}_d\times \mathfrak{S}_d$ on $V^{[2d]}$ is given by 
  $(g,h)\cdot (x\odot y) = (gx)\odot (hy)$ using the left Goldman action of 
  $\mathfrak{S}_d$ on $V^{\otimes d}$.
\end{lem}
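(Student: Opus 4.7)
The plan is to realise $V^{[2d]}$ as a quotient of $V^{\otimes 2d}$ via Lemma \ref{lem_isom_red_tens_pow} and then trace the Goldman action of the Young subgroup $\mathfrak{S}_{d,d}\cong \mathfrak{S}_d\times \mathfrak{S}_d$ through this description. Setting $B=\End_A(V)$, the Goldman action on $V^{[2d]}$ is by construction the left $B^{\otimes 2d}$-module structure obtained by descending the Goldman action on $V^{\otimes 2d}$. This descent is legitimate because the left $B^{\otimes 2d}$-action on $V^{\otimes 2d}$ commutes with the right $A^{\otimes 2d}$-action that is being tensored out in the isomorphism $V^{[2d]}\simeq V^{\otimes 2d}\otimes_{A^{\otimes 2d}}|A^{\otimes d}|_{\sigma^{\otimes d}}$.

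Next I would unpack the canonical morphism $K[\mathfrak{S}_{2d}]\to B^{\otimes 2d}$ on the subalgebra $K[\mathfrak{S}_{d,d}]$. The Young subgroup $\mathfrak{S}_{d,d}$ is generated by the transpositions $(i,i+1)$ with $i\in \{1,\dots,2d-1\}\setminus \{d\}$. By the very definition of the Goldman map (recalled before Proposition \ref{prop_sym_inv}), each such transposition is sent to an element of the form $1\otimes\cdots\otimes g_B\otimes\cdots\otimes 1\in B^{\otimes 2d}$ with the Goldman element placed in two adjacent slots that are either both among the first $d$ or both among the last $d$. Hence its image lies in $B^{\otimes d}\otimes 1^{\otimes d}$ in the first case and in $1^{\otimes d}\otimes B^{\otimes d}$ in the second, so the restriction factors as the tensor product of the two Goldman maps $K[\mathfrak{S}_d]\to B^{\otimes d}$. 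Consequently $(g,h)\in \mathfrak{S}_d\times \mathfrak{S}_d$ acts on $V^{\otimes 2d}=V^{\otimes d}\otimes V^{\otimes d}$ as $g\otimes h$, with each factor acting through its own Goldman action.

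Finally, given $x,y\in V^{\otimes d}$, the element $x\odot y\in V^{[2d]}$ corresponds under Lemma \ref{lem_isom_red_tens_pow} to the class of $(x\otimes y)\otimes 1$; applying $(g,h)$ yields the class of $(gx\otimes hy)\otimes 1$, which maps back to $(gx)\odot(hy)$, giving the desired formula. I expect the main subtlety to be precisely the identification of the restriction of the Goldman map to the Young subgroup; once that is in hand the rest is a routine compatibility check with the quotient presentation of $V^{[2d]}$.
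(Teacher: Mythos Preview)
Your proposal is correct and follows essentially the same route as the paper: both arguments note that $V^{[2d]}$ inherits its $B^{\otimes 2d}$-module structure as a quotient of $V^{\otimes 2d}$, so that $(a\otimes b)\cdot(x\odot y)=(ax)\odot(by)$ for $a,b\in B^{\otimes d}$, and then observe that the image of the Young subgroup under the Goldman map lands in $B^{\otimes d}\otimes B^{\otimes d}$ in the factored way. You spell out the factorization of the Goldman map on generators more explicitly than the paper does, but the argument is the same.
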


\begin{proof}
  This is straightforward since the $B^{\otimes 2d}$-module structure of 
  $V^{[2d]}$ is the quotient of the $B^{\otimes 2d}$-module structure of
  $V^{\otimes 2d}$, so for any $a,b\in B^{\otimes d}$ and $x,y\in V^{\otimes d}$,
  we get $(a\otimes b)\cdot (x\odot y) = (ax)\odot (by)$. Since the action of
  $\mathfrak{S}_{2d}$ is defined through $B^{\otimes 2d}$, the lemma follows.
\end{proof}

Second, we can embed $(\Zd)^d$ in $\mathfrak{S}_{2d}$ by sending 
$(x_i)_{1\ppq i\ppq d}\in (\Zd)^d$ to the permutation which exchanges $i$ and
$i+d$ for all $1\ppq i\ppq d$ with $x_i=1$, and leaves the other elements unchanged.
In other words, the $i$th generator $(0,\dots,1,\dots,0)$ is sent to the transposition
$(i, i+d)$. 

For any $g\in \mathfrak{S}_{2d}$, we define its $\sigma$-signature $\eps(\sigma)^g$ 
as $\eps(\sigma)$ if $g$ is an odd permutation, and $1$ if $g$ is even. So when $\sigma$
is orthogonal $\eps(\sigma)^g$ is always $1$, and when $\sigma$ is symplectic this is
the ordinary signature of $g$.

Note that the permutation action of $\mathfrak{S}_{2d}$ on $V^{\otimes 2d}$, unlike
the Goldman action, does not factor to $V^{[2d]}$. For instance, if $d=2$, the
action of the transposition $(1, 2)$ does not give a well-defined map 
$(x_1\otimes x_2)\odot (x_3\otimes x_4)\mapsto (x_2\otimes x_1)\odot (x_3\otimes x_4)$.
On the other hand, when we restrict to $(\Zd)^d$:

\begin{lem}\label{lem_action_zd}
  Let $(A,\sigma)$ be an Azumaya algebra with involution over $(K,\Id)$, and
  let $V$ be a right $A$-module.
  Consider the permutation action of $\mathfrak{S}_{2d}$ on $V^{\otimes 2d}$.
  The restriction of this action to the subgroup $(\Zd)^d\subset \mathfrak{S}_{2d}$
  factors through the natural quotient map $V^{\otimes 2d}\to V^{[2d]}$, and the
  resulting action on $V^{[2d]}$ coincides with the Goldman action of this subgroup, 
  up to multiplication by the $\sigma$-signature.
\end{lem}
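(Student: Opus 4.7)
The plan is to reduce to the case $d = 1$, $\pi = (1,2)$, then to check that the literal swap descends to $V^{[2,\sigma]}$, to compute the Goldman action of $(1,2)$ on $V^{[2,\sigma]}$ explicitly, and finally to show that the two maps differ by the scalar $\eps(\sigma) \in K$ via a direct computation in the split case.

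For the reduction, I would use the natural identification $V^{[2d,\sigma]} \simeq (V^{[2,\sigma]})^{\otimes d}$ obtained by reshuffling the tensor factors so that factor $i$ is paired with factor $i+d$. Under this isomorphism, each generator $\tau_i = (i, i+d)$ of $(\Zd)^d \subset \mathfrak{S}_{2d}$ acts---both in the permutation and in the Goldman sense---as a swap on the $i$-th copy of $V^{[2,\sigma]}$ and trivially on the others. Since the $\tau_i$ commute and the map $g \mapsto \eps(\sigma)^g$ is a homomorphism $\mathfrak{S}_{2d} \to \{\pm 1\}$, it is enough to treat a single transposition in the case $d = 1$.

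For $d = 1$, the defining relation $xa \odot y = x \odot y\sigma(a)$ of $V^{[2,\sigma]} = V \otimes_A {}^\sigma V$ together with $\sigma^2 = \Id$ shows directly that the literal swap $v \otimes w \mapsto w \otimes v$ respects these relations, descending to the map $v \odot w \mapsto w \odot v$ on $V^{[2,\sigma]}$. Meanwhile, Proposition \ref{prop_goldman_commut} applied to the transposition $(1,2)$ gives that the Goldman action $L((1,2))$ sends $v \otimes w$ to $(w \otimes v) \cdot g_A = \sum_i (wa_i) \otimes (vb_i)$, where $g_A = \sum_i a_i \otimes b_i$ is the Goldman element. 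Projecting to $V^{[2,\sigma]}$ via the defining relation once more yields $L((1,2))(v \odot w) = w \odot v \cdot c$, where $c := \sum_i b_i \sigma(a_i) \in A$.

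It therefore remains to verify the identity $c = \eps(\sigma) \cdot 1_A$, which is the main obstacle. Granted this, $c$ is central with $\sigma(c) = c$, and the relation $x \odot yc = x\sigma(c) \odot y$ immediately yields $L((1,2))(v \odot w) = \eps(\sigma) (w \odot v)$, so that the Goldman action and the swap agree on $V^{[2,\sigma]}$ up to $\eps(\sigma)$. To establish the identity, I would extend scalars to a splitting field, using that both $c$ and $\eps(\sigma)$ are preserved by scalar extension. In the split case $A = M_n(K)$, writing $\sigma(x) = b^{-1} x^T b$ with $b$ symmetric (resp.\ anti-symmetric, $n$ even) and using the explicit formula $g_A = \sum_{i,j} e_{ij} \otimes e_{ji}$, a direct matrix computation gives $c = (b^T)^{-1} b$, which equals $1_A$ when $b$ is symmetric and $-1_A$ when $b$ is anti-symmetric---matching $\eps(\sigma)$ in the orthogonal and symplectic cases respectively.
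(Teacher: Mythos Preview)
Your proof is correct and follows essentially the same approach as the paper: reduce to $d=1$, use Proposition~\ref{prop_goldman_commut} to write $g_B(v\otimes w)=(w\otimes v)g_A$, push this through the quotient to obtain $w\odot v$ times a scalar built from $g_A$ and $\sigma$, and identify that scalar with $\eps(\sigma)$ via the split case. The paper's version is terser (it does not spell out the descent of the swap or the reshuffle isomorphism, and it notes that the scalar identification also follows from \cite[Exercise~I.12]{BOI}), but the argument is the same; your explicit matrix computation $c=(b^T)^{-1}b$ is a perfectly good substitute for that reference.
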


\begin{proof}
  We can easily reduce to the case where $d=1$, and prove that the action of $g_B\in B^{\otimes 2}$
  on $V\otimes_A V$ is given by $g_B\cdot (x\odot y)=\eps(\sigma)y\odot x$.

  We know from Proposition \ref{prop_goldman_commut} that $g_B(x\otimes y)g_A = y\otimes x$.
  The image of $(x\otimes y)g_A\in V^{\otimes 2}$ in $V^{[2]}$ is $xb\odot y$, where
  $b = \mu((\Id\otimes \sigma)(g_A))$, writing $\mu:A^{\otimes 2}\to A$ for the multiplication
  map. We just need to see that $b=\eps(\sigma)$.

  This can be shown by reducing to the split case, or it follows from \cite[Exercise I.12]{BOI},
  since $b$ is by construction equal to $g_A\cdot 1$, using the twisted action of 
  $A^{\otimes 2}$ on $|A|_\sigma$.
\end{proof}

Since $\mathfrak{S}_d\times \mathfrak{S}_d$ and $(\Zd)^d$ generate $\mathfrak{S}_{2d}$,
these lemmas fully characterize the action of $\mathfrak{S}_{2d}$.

\subsubsection*{Reduced alternating powers of a module}

Since the reduced tensor power $V^{[2d]}$ corresponds to the $A^{\otimes 2d}$-module
$V^{\otimes 2d}$ through the Morita equivalence given by $|A^{\otimes d}|_{\sigma^{\otimes d}}$,
we logically define the reduced alternating power as the corresponding subspace of $V^{[2d]}$.

\begin{defi}\label{def_ralt_mod_even}
  If $(A,\sigma)$ is an Azumaya algebra with involution of the first kind over $K$ 
  and $V$ is a right $A$-module, we define its $2d$th reduced alternating power 
  \[ \RAlt^{2d,\sigma}(V) = s_{2d}\cdot V^{[2d,\sigma]} \subset V^{[2d,\sigma]} \] 
  using the Goldman action of $\mathfrak{S}_{2d}$ on $V^{[2d]}$.
\end{defi}

Similarly to the reduced tensor powers, we usually drop the $\sigma$ from the notation
and just write $\RAlt^{2d}(V)$. By definition, $\RAlt^{2d}(V)$ is the image
through the canonical quotient map $V^{\otimes 2d}\to V^{[2d]}$ of the subspace 
$\Alt^{2d}(V)\subset V^{\otimes 2d}$.
\\

For any $1\ppq i\ppq d$, let $\tau_i: V^{[2d]}\to V^{[2d]}$ be the linear automorphism
which acts on $x\odot y$ by exchanging the $i$th tensor factors of $x$ and $y$.
We say that $x\in V^{[2d]}$ is an anti-mirror element if for any $i\in \{1,\dots,d\}$,
$\tau_i(x)=-\eps(\sigma)x$, and we write $AM^{2d,\sigma}(V)$ for the subspace of anti-mirror
elements.

\begin{prop}\label{prop_mirror}
  Let $(A,\sigma)$ be an Azumaya algebra with involution of the first kind over $K$ 
  and $V$ a right $A$-module. The subspace $\RAlt^{2d,\sigma}(V)$ of $V^{[2d]}$
  is the intersection of $AM^{2d,\sigma}(V)$ with
  $\Alt^d(V)\otimes_{A^{\otimes d}} \mbox{}^\sigma V^{\otimes d}$.
\end{prop}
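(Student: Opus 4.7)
The core idea is to characterize $\RAlt^{2d,\sigma}(V)=s_{2d}V^{[2d]}$ as the subspace of Goldman-antisymmetric elements of $V^{[2d]}$, meaning those $w$ with $g\cdot w=(-1)^g w$ for all $g\in \mathfrak{S}_{2d}$, and then to recognize that antisymmetry under a carefully chosen pair of subgroups generating $\mathfrak{S}_{2d}$ precisely captures the two conditions appearing in the proposition.

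The first step is to extend Lemma \ref{lem_ker_im_sd} from the straight tensor-power setting to $V^{[2d]}$, establishing that $\Ima(s_{2d})=\bigcap_g \ker(1-(-1)^g g)$ with $g$ running over any generating set of $\mathfrak{S}_{2d}$. The proof mirrors that of Lemma \ref{lem_ker_im_sd}: after scalar extension to a splitting field and the Morita equivalence provided by $|A^{\otimes d}|_{\sigma^{\otimes d}}$ (via Lemma \ref{lem_isom_red_tens_pow}), $V^{[2d]}$ becomes identified with the ordinary $2d$-th tensor power of a $K$-vector space, with the Goldman action matching the genuine permutation action, reducing the claim to the combinatorics already carried out for vector spaces.

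Next I translate each condition of the proposition into Goldman antisymmetry under a specific subgroup. By Lemma \ref{lem_action_young}, the Goldman action of $\mathfrak{S}_d\times 1\subset \mathfrak{S}_{2d}$ is $(g,1)\cdot(x\odot y)=(gx)\odot y$, so antisymmetry under this Young subgroup is equivalent to $w\in (s_d\otimes 1)V^{[2d]}=\Alt^d(V)\otimes_{A^{\otimes d}}\mbox{}^\sigma V^{\otimes d}$ (applying the extended Lemma \ref{lem_ker_im_sd} to this subgroup acting on the first tensor factor). By Lemma \ref{lem_action_zd}, the Goldman action of the transposition $(i,i+d)\in (\Zd)^d$ equals $\eps(\sigma)$ times the permutation exchange $\tau_i$, so Goldman antisymmetry under this transposition reads $\tau_i(w)=-\eps(\sigma)w$, which is exactly the anti-mirror condition.

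Finally, $\mathfrak{S}_d\times 1$ and the subgroup $(\Zd)^d$ together generate $\mathfrak{S}_{2d}$: conjugating a transposition $(i,j)\in \mathfrak{S}_d\times 1$ by $(j,d+j)\in (\Zd)^d$ yields $(i,d+j)$, and further such conjugations produce every transposition in $\mathfrak{S}_{2d}$. Invoking the extended characterization from the first step with this generating set, we conclude that $w\in \RAlt^{2d,\sigma}(V)$ exactly when $w\in \Alt^d(V)\otimes_{A^{\otimes d}}\mbox{}^\sigma V^{\otimes d}$ and $w\in AM^{2d,\sigma}(V)$. The main obstacle is the adaptation of Lemma \ref{lem_ker_im_sd} to the twisted reduced tensor space $V^{[2d]}$, but the Morita reduction makes this essentially formal, since in the split case $V^{[2d]}$ is genuinely a pure tensor power of vector spaces.
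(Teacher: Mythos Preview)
Your argument is correct and follows essentially the same route as the paper: identify $s_{2d}V^{[2d]}$ via Lemma \ref{lem_ker_im_sd} as the intersection of the $\ker(1-(-1)^gg)$ over a generating set, take that generating set to be $\mathfrak{S}_d\times\{1\}$ together with $(\Zd)^d$, and use Lemmas \ref{lem_action_young} and \ref{lem_action_zd} to translate the two resulting conditions into $\Alt^d(V)\otimes_{A^{\otimes d}}\mbox{}^\sigma V^{\otimes d}$ and $AM^{2d,\sigma}(V)$ respectively. The paper applies Lemma \ref{lem_ker_im_sd} directly to $V^{[2d]}$ without further comment, whereas you are explicit that this requires a mild extension via Morita reduction; that extra care is appropriate but does not change the strategy.
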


\begin{proof}
  Consider the subgroups $\mathfrak{S}_d\times \mathfrak{S}_d$ and
  $(\Zd)^d$ in $\mathfrak{S}_{2d}$, as above. The group $\mathfrak{S}_{2d}$ 
  is generated by $\mathfrak{S}_d\times \{1\}$ and $(\Zd)^d$, so according
  to Lemma \ref{lem_ker_im_sd}, $s_{2d}V^{[2d]}$ is the intersection of 
  the $\ker(1-(-1)^g g)$ for $g\in \mathfrak{S}_d\times \{1\}$ and 
  $g\in (\Zd)^d$.

  Using Lemma \ref{lem_action_zd}, the intersection of the $\ker(1-(-1)^g g)$
  with $g\in (\Zd)^d$ is exactly $AM^{2d,\sigma}(V)$. And using Lemma
  \ref{lem_action_young}, the action of $\mathfrak{S}_d\times \{1\}$ is
  simply the Goldman action of $\mathfrak{S}_d$ on the left factor
  of $V^{[2d]}=V^{\otimes d}\otimes_{A^{\otimes d}} 
  \mbox{}^{\sigma} V^{\otimes d}$, so using again Lemma \ref{lem_ker_im_sd},
  the intersection of the $\ker(1-(-1)^g g)$ for $g\in \mathfrak{S}_d\times \{1\}$
  is $s_dV^{\otimes d}\otimes_{A^{\otimes d}} \mbox{}^{\sigma} V^{\otimes d}$,
  which gives the first equality.
\end{proof}

\subsubsection*{Reduced alternating powers of a hermitian form}

If we assume again that $V$ carries an $\eps$-hermitian form over $(A,\sigma)$,
we define the $2d$th reduced alternating power of $h$ as the symmetric
bilinear form defined on $\RAlt^{2d}(V)$ by
\begin{equation}\label{eq_def_ralt_h}
  \RAlt^{2d}(h)(s_{2d}x,s_{2d}y) = h^{[2d]}(s_{2d}x,y) = h^{[2d]}(x,s_{2d}y).
\end{equation}

\begin{prop}\label{prop_ralt_h}
  Let $(A,\sigma)$ be an Azumaya algebra with involution of the first kind over $K$,
  and let $(V,h)$ be an $\eps$-hermitian module over $(A,\sigma)$.
  Under the restriction of the vector space isomorphism given in Lemma \ref{lem_isom_red_tens_pow}, 
  the bilinear space $(\RAlt^{2d}(V),\RAlt^{2d}(h))$ is identified with the composition
  \[ (|A|_{\sigma^{\otimes d}},T_{\sigma^{\otimes d}})\circ (\Alt^{2d}(V),\Alt^{2d}(h)). \]   

  In particular, $(\Lambda^{2d}(V),\lambda^{2d}(h))$ is the isometry class of 
  $(\RAlt^{2d}(V),\RAlt^{2d}(h))$.
\end{prop}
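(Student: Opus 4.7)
The strategy is to restrict the identification of Proposition \ref{prop_red_power_herm} to appropriate subspaces. That proposition identifies, under the vector space isomorphism of Lemma \ref{lem_isom_red_tens_pow}, the full bilinear space $(V^{[2d]},h^{[2d]})$ with the Morita composition $(|A^{\otimes d}|_{\sigma^{\otimes d}}, T_{\sigma^{\otimes d}})\circ (V^{\otimes 2d}, h^{\otimes 2d})$. Both $\RAlt^{2d}$ and $\Alt^{2d}$ are obtained by cutting down with the same antisymmetrizer $s_{2d}$, so the plan is to check that this cutting commutes with the Morita transfer both at the level of modules and at the level of forms.

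For the module identification, the Morita equivalence uses the right $A^{\otimes 2d}$-structure on $V^{\otimes 2d}$, whereas the Goldman action of $\mathfrak{S}_{2d}$ (and hence of $s_{2d}$) comes from the left $B^{\otimes 2d}$-structure, where $B=\End_A(V)$. These two actions commute, so the isomorphism of Lemma \ref{lem_isom_red_tens_pow} is $\mathfrak{S}_{2d}$-equivariant for the two Goldman actions. Consequently it sends $\Alt^{2d}(V)\otimes_{A^{\otimes 2d}} |A^{\otimes d}|_{\sigma^{\otimes d}}$ (the Morita transfer of $\Alt^{2d}(V)=s_{2d}V^{\otimes 2d}$) onto $s_{2d} V^{[2d]} = \RAlt^{2d}(V)$.

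For the match of forms, I would unwind the composition formula of Definition \ref{def_br_h}: the composed form sends $((s_{2d}x)\otimes 1,(s_{2d}y)\otimes 1)$ to $T_{\sigma^{\otimes d}}(1, \Alt^{2d}(h)(s_{2d}x, s_{2d}y)\cdot 1)$. Using $\Alt^{2d}(h)(s_{2d}x, s_{2d}y) = h^{\otimes 2d}(s_{2d}x, y)$, this equals the Morita transfer of $h^{\otimes 2d}$ evaluated at the image pair $(\overline{s_{2d}x}, \bar{y})$ in $V^{[2d]}$, which by Proposition \ref{prop_red_power_herm} is $h^{[2d]}(s_{2d}\bar{x}, \bar{y})$, and by the defining equation \eqref{eq_def_ralt_h} this is exactly $\RAlt^{2d}(h)(s_{2d}\bar{x}, s_{2d}\bar{y})$. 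The ``in particular'' clause is then immediate, since $\lambda^{2d}(h)$ is by construction the isometry class obtained by composing $(\Alt^{2d}(V), \Alt^{2d}(h))$ with the canonical Morita equivalence $(|A^{\otimes d}|_{\sigma^{\otimes d}}, T_{\sigma^{\otimes d}})$.

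The main obstacle is keeping track of the placement of $s_{2d}$. The forms $\Alt^{2d}(h)$ and $\RAlt^{2d}(h)$ are defined with $s_{2d}$ on only one side rather than both, which matters crucially: naively restricting $h^{\otimes 2d}$ or $h^{[2d]}$ to the respective subspaces introduces a spurious $(2d)!$ factor (see Proposition \ref{prop_restr_alt}). The computation works out precisely because Corollary \ref{cor_sd_sym} guarantees that $s_{2d}$ is $\sigma^{\otimes 2d}$-symmetric, so that moving it between arguments is legitimate; and because the conventions for $\Alt^{2d}(h)$ (with $s_{2d}$ on the left argument) and the defining formula of $\RAlt^{2d}(h)$ (same convention) match exactly, so no factorial is picked up.
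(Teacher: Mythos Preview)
Your proof is correct and follows essentially the same approach as the paper: start from the identification of Proposition~\ref{prop_red_power_herm} for the full spaces, observe that the $s_{2d}$-action commutes with the Morita transfer so the subspaces match, and then check that the defining formula~\eqref{eq_def_ralt_h} for $\RAlt^{2d}(h)$ is exactly what the transfer of $\Alt^{2d}(h)$ produces. Your additional commentary on why no spurious $(2d)!$ factor appears is a helpful clarification but not a departure from the paper's argument.
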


\begin{proof}
  We know from Proposition \ref{prop_red_power_herm} that $(V^{[2d]},h^{[2d]})$ is identified 
  with the composition
  \[ (|A^{\otimes d}|_{\sigma^{\otimes d}},T_{\sigma^{\otimes d}})\circ (V^{\otimes 2d},h^{\otimes 2d}). \]
  Let $b$ be the bilinear form on $\RAlt^{2d}(V)$ such that under this identification
  $(\RAlt^{2d}(V),b)$ corresponds to 
  \[ (|A^{\otimes d}|_{\sigma^{\otimes d}},T_{\sigma^{\otimes d}})\circ (\RAlt^{2d}(V),\Alt^{2d}(h)). \]
  Then considering that by definition 
  \[ \Alt^{2d}(h)(s_{2d}x,s_{2d}y)=h^{\otimes 2d}(s_{2d}x,y) = h^{\otimes 2d}(x,s_{2d}y) \] 
  we must have $b$ satisfing formula (\ref{eq_def_ralt_h}).
\end{proof}

\begin{coro}\label{cor_restr_ralt}
  The restriction of $h^{[2d]}$ to $\RAlt^{2d}(V)\subset V^{[2d]}$ is 
  $\fdiag{(2d)!}\RAlt^{2d}(h)$.
\end{coro}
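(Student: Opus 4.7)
My plan is to derive this corollary by combining Proposition \ref{prop_restr_alt} --- which already provides the analogous identity $h^{\otimes 2d}|_{\Alt^{2d}(V)} = \fdiag{(2d)!}\Alt^{2d}(h)$ at the tensor power level --- with the Morita-theoretic identifications provided by Propositions \ref{prop_red_power_herm} and \ref{prop_ralt_h}. The point is that everything in sight is obtained from its tensor-power analogue by composing with the same fixed Morita equivalence $(|A^{\otimes d}|_{\sigma^{\otimes d}}, T_{\sigma^{\otimes d}})$, so the identity for $h^{\otimes 2d}$ transfers essentially for free to one for $h^{[2d]}$.

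First I would use Lemma \ref{lem_isom_red_tens_pow} to identify $V^{[2d]}$ with $V^{\otimes 2d}\otimes_{A^{\otimes 2d}} |A^{\otimes d}|_{\sigma^{\otimes d}}$. Under this isomorphism, the inclusion $\RAlt^{2d}(V) \subset V^{[2d]}$ corresponds to the inclusion of $\Alt^{2d}(V) \otimes_{A^{\otimes 2d}} |A^{\otimes d}|_{\sigma^{\otimes d}}$ into $V^{\otimes 2d} \otimes_{A^{\otimes 2d}} |A^{\otimes d}|_{\sigma^{\otimes d}}$, because the natural quotient $V^{\otimes 2d} \to V^{[2d]}$ is $B^{\otimes 2d}$-linear (where $B = \End_A(V)$) and therefore intertwines the Goldman action of $\mathfrak{S}_{2d}$, mapping the image of $s_{2d}$ on one side onto the image of $s_{2d}$ on the other. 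Proposition \ref{prop_red_power_herm} then identifies $h^{[2d]}$ with the composition $(|A^{\otimes d}|_{\sigma^{\otimes d}}, T_{\sigma^{\otimes d}}) \circ (V^{\otimes 2d}, h^{\otimes 2d})$. Since such a Morita composition is functorial in the input form --- it commutes both with restriction to a sub-bilinear form and with scaling by a $\fdiag{\lambda}$ --- applying Proposition \ref{prop_restr_alt} with $d$ replaced by $2d$ shows that the restriction of $h^{[2d]}$ to $\RAlt^{2d}(V)$ is $\fdiag{(2d)!}$ times $(|A^{\otimes d}|_{\sigma^{\otimes d}}, T_{\sigma^{\otimes d}}) \circ \Alt^{2d}(h)$, which by Proposition \ref{prop_ralt_h} is exactly $\fdiag{(2d)!}\RAlt^{2d}(h)$.

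The only thing to verify with any care is the compatibility of Morita composition with restrictions and with scalar multiplication, but both are immediate from the formula in Definition \ref{def_br_h}, so there is no substantive obstacle. An alternative route would mirror the proof of Proposition \ref{prop_restr_alt} directly: using $s_{2d}^2 = (2d)!\,s_{2d}$ in $K[\mathfrak{S}_{2d}]$ together with $S_\iota(s_{2d}) = s_{2d}$, one could write $h^{[2d]}(s_{2d}x, s_{2d}y) = h^{[2d]}(x, s_{2d}^2 y) = (2d)!\, h^{[2d]}(x, s_{2d}y) = (2d)!\,\RAlt^{2d}(h)(s_{2d}x, s_{2d}y)$, but this requires first establishing an analog of Corollary \ref{cor_sd_sym} for $h^{[2d]}$ with respect to the Goldman action on $V^{[2d]}$; the approach above avoids this detour by reducing directly to results already in hand.
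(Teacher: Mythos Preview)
Your proof is correct and follows exactly the paper's approach: the paper's one-line argument says the result is a direct consequence of Proposition~\ref{prop_restr_alt} because $h^{[2d]}$ and $\RAlt^{2d}(h)$ are obtained from $h^{\otimes 2d}$ and $\Alt^{2d}(h)$ through the same Morita equivalence, and you have simply spelled out this reasoning in full detail.
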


\begin{proof}
  This is a direct consequence of Proposition \ref{prop_restr_alt}, since 
  $h^{[2d]}$ and $\RAlt^{2d}(h)$ are obtained from $h^{\otimes 2d}$ and 
  $\Alt^{2d}(h)$ through the same Morita equivalence.
\end{proof}

\subsubsection*{Reduced alternating powers of $\fdiag{1}_\sigma$}

The descriptions we gave can be somewhat simplified when $(V,h) = (A,\fdiag{1}_\sigma)$.

\begin{lem}\label{lem_ident_ad}
  Let $(A,\sigma)$ be an Azumaya algebra with involution of the first kind over $K$.
  The map $x\odot y \mapsto x\sigma(y)$ is an isomorphism of left $A^{\otimes 2d}$-modules
  from $A^{[2d,\sigma]}$ to $|A^{\otimes d}|_{\sigma^{\otimes d}}$, with inverse 
  $x\mapsto x\odot 1 = 1\odot \sigma^{\otimes d}(x)$.
\end{lem}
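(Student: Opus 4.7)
The plan is to recognize this lemma as essentially the specialization of Lemma \ref{lem_isom_red_tens_pow} to the case $V=A$, composed with the canonical simplification one gets when $V^{\otimes 2d}$ is free of rank one over $A^{\otimes 2d}$. Under Lemma \ref{lem_isom_red_tens_pow} the space $A^{[2d,\sigma]}$ is identified with $A^{\otimes 2d}\otimes_{A^{\otimes 2d}}|A^{\otimes d}|_{\sigma^{\otimes d}}$, which collapses canonically to $|A^{\otimes d}|_{\sigma^{\otimes d}}$ via $(x\otimes y)\otimes a\mapsto (x\otimes y)\cdot a = x a\sigma^{\otimes d}(y)$, using the twisted action of (\ref{eq_twisted_action}). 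Chasing $x\odot y$ through the composition gives precisely $x\sigma^{\otimes d}(y)$, which justifies the formula in the statement (where $\sigma$ denotes $\sigma^{\otimes d}$).

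First I would directly check well-definedness of $\phi: x\odot y\mapsto x\sigma^{\otimes d}(y)$, by verifying it respects the balancing relation defining the tensor product over $A^{\otimes d}$: $xa\odot y$ and $x\odot (a\cdot y)=x\odot y\sigma^{\otimes d}(a)$ should have equal images, which holds since $\sigma^{\otimes d}$ is an anti-involution. Next I would check that $\phi$ is a left $A^{\otimes 2d}$-module morphism. Since $V=A$ gives $B=\End_A(A)=A$, the left $A^{\otimes 2d}$-action on $A^{[2d,\sigma]}$ is induced by left multiplication factor by factor on $A^{\otimes 2d}$, so $(a\otimes b)\cdot(x\odot y)=(ax)\odot(by)$; applying $\phi$ produces $ax\sigma^{\otimes d}(y)\sigma^{\otimes d}(b)$, which coincides with the twisted action of $a\otimes b$ on $x\sigma^{\otimes d}(y)\in |A^{\otimes d}|_{\sigma^{\otimes d}}$.

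Finally I would exhibit the inverse $\psi: x\mapsto x\odot 1$. The equality $x\odot 1=1\odot\sigma^{\otimes d}(x)$ in $A^{\otimes d}\otimes_{A^{\otimes d}}{}^\sigma A^{\otimes d}$ is an immediate application of the balancing relation with the scalar $x\in A^{\otimes d}$. It is then obvious that $\phi\circ\psi=\Id$. For $\psi\circ\phi=\Id$, it suffices to note that every elementary tensor satisfies $x\odot y=(x\sigma^{\otimes d}(y))\odot 1$, again by the balancing relation (move $\sigma^{\otimes d}(y)\in A^{\otimes d}$ across the tensor sign), so $\psi(\phi(x\odot y))=x\odot y$.

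The only real obstacle is notational bookkeeping: being careful that $\sigma$ in the statement abbreviates $\sigma^{\otimes d}$, that the left $A^{\otimes 2d}$-structure on $|A^{\otimes d}|_{\sigma^{\otimes d}}$ uses the twisted action in which the second tensor factor hits through $\sigma^{\otimes d}$, and that the balancing relation in $A^{\otimes d}\otimes_{A^{\otimes d}}{}^\sigma A^{\otimes d}$ pushes scalars across with a $\sigma^{\otimes d}$. Once these conventions are fixed, there is no genuine difficulty beyond direct verification.
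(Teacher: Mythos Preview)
Your proposal is correct and follows essentially the same approach as the paper: the paper's proof consists of a single sentence stating that the map is obtained by composing the isomorphism of Lemma~\ref{lem_isom_red_tens_pow} (specialized to $V=A$) with the canonical isomorphism $A^{\otimes 2d}\otimes_{A^{\otimes 2d}} |A^{\otimes d}|_{\sigma^{\otimes d}}\simeq |A^{\otimes d}|_{\sigma^{\otimes d}}$. Your additional direct verifications of well-definedness, $A^{\otimes 2d}$-linearity, and the inverse are correct and more detailed than what the paper provides.
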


\begin{proof}
  This is just composing the isomorphism in Lemma \ref{lem_isom_red_tens_pow} with the 
  canonical isomorphism between 
  $A^{\otimes 2d}\otimes_{A^{\otimes 2d}} |A^{\otimes d}|_{\sigma^{\otimes d}}$
  and $|A^{\otimes d}|_{\sigma^{\otimes d}}$.
\end{proof}

So as a vector space $A^{[2d,\sigma]}$ can be identified with $A^{\otimes d}$,
and the action of $\mathfrak{S}_{2d}$ on $A^{\otimes d}$ that we use is the one
coming from the left $A^{\otimes 2d}$-module structure of $|A^{\otimes d}|_{\sigma^{\otimes d}}$
(the action twisted by $\sigma^{\otimes d}$, recall (\ref{eq_twisted_action})).

Let us write $\RAlt^{2d}(A,\sigma)$ for the subspace of $A^{\otimes d}$ corresponding
to $\RAlt^{2d,\sigma}(A)\subset A^{[2d,\sigma]}$, ie $\RAlt^{2d}(A,\sigma) = s_{2d}A^{\otimes d}$.

For any $1\ppq i\ppq d$, write $\sigma_i = 1\otimes\cdots\otimes \sigma\otimes\cdots\otimes 1$,
with the $\sigma$ at the $i$th spot, and define the subspace of totally $\sigma$-antisymmetric
elements in $A^{\otimes d}$ as 
\[ TA^{2d}(A,\sigma) = \ens{x\in A^{\otimes d}}{\forall i\in \{1,\dots,d\},\, \sigma_i(x) = -\eps(\sigma)x}.  \]
In particular, $TA^{2d}(A,\sigma)\subset \Sym^{(-\eps(\sigma))^d}(A^{\otimes d},\sigma^{\otimes d})$.

\begin{prop}
  Let $(A,\sigma)$ be an Azumaya algebra with involution of the first kind over $K$.
  Under the identification $A^{[2d,\sigma]}\simeq A^{\otimes d}$ of Lemma \ref{lem_ident_ad}, 
  the subspace $AM^{2d,\sigma}(A)$ of anti-mirror elements corresponds to the subspace
  $TA^{2d}(A,\sigma)$ of totally $\sigma$-antisymmetric elements, and
  \[ \RAlt^{2d}(A,\sigma) = TA^{2d}(A,\sigma)\cap \Alt^d(A) \subset A^{\otimes d}. \] 
\end{prop}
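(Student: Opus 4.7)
The statement decomposes into two independent identifications to be made under the isomorphism $A^{[2d,\sigma]}\simeq A^{\otimes d}$ of Lemma \ref{lem_ident_ad}, which are then combined via Proposition \ref{prop_mirror}. Specifically, I need to identify $AM^{2d,\sigma}(A)$ with $TA^{2d}(A,\sigma)$, and the subspace $\Alt^d(A)\otimes_{A^{\otimes d}} \mbox{}^\sigma A^{\otimes d}\subset A^{[2d,\sigma]}$ with $\Alt^d(A)\subset A^{\otimes d}$.

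For the first identification, I compute $\tau_i$ via the inverse map $z\mapsto z\odot 1$. For a pure tensor $z = z_1\otimes \cdots \otimes z_d$, swapping the $i$th factors of $z$ and $1\otimes\cdots\otimes 1$ gives
\[ \tau_i(z\odot 1) = (z_1\otimes\cdots\otimes 1\otimes\cdots\otimes z_d)\odot(1\otimes\cdots\otimes z_i\otimes\cdots\otimes 1), \]
and applying $x\odot y\mapsto x\sigma^{\otimes d}(y)$ this element is sent to $z_1\otimes\cdots\otimes \sigma(z_i)\otimes\cdots\otimes z_d = \sigma_i(z)$. Hence $\tau_i$ corresponds to $\sigma_i$ under the identification, and the defining condition $\tau_i(x) = -\eps(\sigma)x$ of $AM^{2d,\sigma}(A)$ translates exactly to the condition $\sigma_i(z) = -\eps(\sigma)z$ defining $TA^{2d}(A,\sigma)$.

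For the second identification, the subspace $\Alt^d(A)\otimes_{A^{\otimes d}}\mbox{}^\sigma A^{\otimes d}$ is spanned by elements $s_d(a)\odot y$ with $a,y\in A^{\otimes d}$, which are sent to $s_d(a)\sigma^{\otimes d}(y)\in A^{\otimes d}$. Since $\sigma^{\otimes d}$ is a bijection of $A^{\otimes d}$, and since $\Alt^d(A)$ is by construction a right $A^{\otimes d}$-submodule of $A^{\otimes d}$ (hence closed under right multiplication by any element of $A^{\otimes d}$), the image is $\Alt^d(A)\cdot A^{\otimes d} = \Alt^d(A)$.

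Combining the two identifications with Proposition \ref{prop_mirror} yields the result: transporting $\RAlt^{2d,\sigma}(A) = AM^{2d,\sigma}(A)\cap\bigl(\Alt^d(A)\otimes_{A^{\otimes d}}\mbox{}^\sigma A^{\otimes d}\bigr)$ along $A^{[2d,\sigma]}\simeq A^{\otimes d}$ gives precisely $TA^{2d}(A,\sigma)\cap \Alt^d(A)$, which by the convention introduced just before the statement equals $\RAlt^{2d}(A,\sigma)$. The proof is essentially bookkeeping; the only subtlety is correctly tracking the $\sigma^{\otimes d}$-twist in the identification, which is what ensures that $\tau_i$ maps to $\sigma_i$ rather than to the mere transposition of factors.
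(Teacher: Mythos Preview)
Your proof is correct and follows the same approach as the paper: both reduce to Proposition \ref{prop_mirror} after checking that $\tau_i$ corresponds to $\sigma_i$ under the identification of Lemma \ref{lem_ident_ad}. The paper's proof is a one-liner that declares this correspondence ``easy to see by definition'', whereas you actually carry out the pure-tensor computation and also spell out the second identification (that $\Alt^d(A)\otimes_{A^{\otimes d}}{}^{\sigma}A^{\otimes d}$ maps onto $\Alt^d(A)$), which the paper leaves implicit.
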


\begin{proof}
  This is a direct consequence of Proposition \ref{prop_mirror}, as it is 
  easy to see by definition that under the identification $x\odot y \mapsto x\sigma(y)$,
  $\tau_i$ corresponds to $\sigma_i$.
\end{proof}

Finally, we can identify the reduced tensor power and alternating power of $\fdiag{1}_\sigma$.

\begin{prop}\label{prop_ralt_1}
  Let $(A,\sigma)$ be an Azumaya algebra with involution of the first kind over $K$.
  Under the identification $A^{[2d,\sigma]}\simeq A^{\otimes d}$ of Lemma \ref{lem_ident_ad},
  the bilinear form $\fdiag{1}_\sigma^{[2d]}$ corresponds to $T_{\sigma^{\otimes d}}$, 
  and $\RAlt^{2d}(\fdiag{1}_\sigma)$ to 
  \[  (s_{2d}x,s_{2d}y) \mapsto  (-\eps(\sigma))^d \Trd_{A^{\otimes d}}((s_{2d}x)y)
  = (-\eps(\sigma))^d \Trd_{A^{\otimes d}}(x(s_{2d}y)).  \]  
\end{prop}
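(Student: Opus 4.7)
The plan is to deduce both claims directly from the definitions together with Proposition \ref{prop_red_power_herm} (which identifies $h^{[2d]}$ with $T_{\sigma^{\otimes d}} \circ h^{\otimes 2d}$) and the previous proposition (which shows $\RAlt^{2d}(A,\sigma) \subseteq TA^{2d}(A,\sigma)$).

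\textbf{First statement.} Starting from $h = \fdiag{1}_\sigma$, we have $h^{\otimes d} = \fdiag{1}_{\sigma^{\otimes d}}$, so the defining formula for $h^{[2d]}$ gives
\[ \fdiag{1}_\sigma^{[2d]}(x\odot y, x'\odot y') = \Trd_{A^{\otimes d}}\bigl(\sigma^{\otimes d}(\sigma^{\otimes d}(x)x')\,\sigma^{\otimes d}(y)y'\bigr). \]
On the other hand, under Lemma \ref{lem_ident_ad} the elements $x\odot y$ and $x'\odot y'$ correspond to $u = x\sigma^{\otimes d}(y)$ and $v = x'\sigma^{\otimes d}(y')$, and one computes directly $\sigma^{\otimes d}(u) = y\sigma^{\otimes d}(x)$. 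Then I would expand $T_{\sigma^{\otimes d}}(u,v) = \Trd_{A^{\otimes d}}(y\sigma^{\otimes d}(x)x'\sigma^{\otimes d}(y'))$, and match with the expression above using the cyclic property of $\Trd_{A^{\otimes d}}$ together with the fact that $\Trd_{A^{\otimes d}}\circ \sigma^{\otimes d} = \Trd_{A^{\otimes d}}$ (which holds since $\sigma$ is of the first kind).

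\textbf{Second statement.} I use equation (\ref{eq_def_ralt_h}):
\[ \RAlt^{2d}(\fdiag{1}_\sigma)(s_{2d}\tilde u, s_{2d}\tilde v) = \fdiag{1}_\sigma^{[2d]}(s_{2d}\tilde u,\tilde v), \]
where $\tilde u = u \odot 1$ and $\tilde v = v \odot 1$. Transferring through the identification of the first part, the right-hand side equals $T_{\sigma^{\otimes d}}(s_{2d}\cdot u, v) = \Trd_{A^{\otimes d}}(\sigma^{\otimes d}(s_{2d}\cdot u)\, v)$, where $s_{2d}\cdot$ denotes the action on $A^{\otimes d}$ coming from the twisted $A^{\otimes 2d}$-module structure of $|A^{\otimes d}|_{\sigma^{\otimes d}}$. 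The key input is now the previous proposition: $s_{2d}\cdot u$ lies in $\RAlt^{2d}(A,\sigma) \subseteq TA^{2d}(A,\sigma)$, hence satisfies $\sigma_i(s_{2d}\cdot u) = -\eps(\sigma)(s_{2d}\cdot u)$ for each $i$; composing all $\sigma_i$ gives $\sigma^{\otimes d}(s_{2d}\cdot u) = (-\eps(\sigma))^d (s_{2d}\cdot u)$. Substituting yields the claimed formula $(-\eps(\sigma))^d \Trd_{A^{\otimes d}}((s_{2d}x)y)$. The second equality follows either from the second half of (\ref{eq_def_ralt_h}) by the same argument applied to $\tilde v$, or directly from the cyclicity of $\Trd_{A^{\otimes d}}$ combined with its $\sigma^{\otimes d}$-invariance.

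\textbf{Main obstacle.} The only real bookkeeping is to keep track of \emph{which} action of $\mathfrak{S}_{2d}$ (and hence of $s_{2d}$) is being used on each realization: the Goldman action on $A^{[2d,\sigma]}$ versus the induced twisted action on $A^{\otimes d}$ under the isomorphism of Lemma \ref{lem_ident_ad}. Once one observes that the proposition characterizing $\RAlt^{2d}(A,\sigma)$ as $TA^{2d}(A,\sigma)\cap \Alt^d(A)$ is precisely what converts the $\sigma^{\otimes d}$ appearing in $T_{\sigma^{\otimes d}}$ into the scalar $(-\eps(\sigma))^d$, everything else is a direct computation, and no further nontrivial identification is needed.
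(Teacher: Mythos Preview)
Your proposal is correct and follows essentially the same approach as the paper. The only minor difference is in the first statement: the paper exploits the inverse isomorphism $x\mapsto x\odot 1$ of Lemma~\ref{lem_ident_ad} directly, computing $\fdiag{1}_\sigma^{[2d]}(x\odot 1,y\odot 1)$ with the second slots equal to $1$, which makes the identification with $T_{\sigma^{\otimes d}}$ immediate without any appeal to cyclicity or $\sigma$-invariance of the reduced trace; your route via general $x\odot y$ works but is slightly longer.
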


\begin{proof}
  The bilinear form corresponding to $\fdiag{1}_\sigma^{[2d]}$ sends $(x,y)$ to
  \begin{align*}
    \fdiag{1}_\sigma^{[2d]}(x\odot 1, y\odot 1) 
    &= \Trd_{A^{\otimes d}}(\sigma^{\otimes d}(\fdiag{1}_\sigma^{\otimes 2d}(x,y))\fdiag{1}_\sigma^{\otimes 2d}(1,1)) \\
    &= \Trd_{A^{\otimes d}}(x\sigma^{\otimes d}(y))
  \end{align*}
  which is $T_{\sigma^{\otimes d}}$. Then according to (\ref{eq_def_ralt_h})
  the form corresponding to $\RAlt^{2d}(\fdiag{1}_\sigma)$ sends $(s_{2d}x,s_{2d}y)$ to
  \begin{align*}
    T_{\sigma^{\otimes d}}(s_{2d}x, y) &= \Trd_{A^{\otimes d}}(\sigma^{\otimes d}(s_{2d}x)y) \\
    &= (-\eps(\sigma))^d \Trd_{A^{\otimes d}}((s_{2d}x)y)
  \end{align*}
  where we use that $\sigma^{\otimes d}(s_{2d}x) = (-\eps(\sigma))^d s_{2d}x$, since
  $s_{2d}x$ is in $TA^{2d}(A,\sigma)$.

  The other equality follows since the bilinear form is symmetric.
\end{proof}

\begin{rem}
  We verified that our identifications did yield the expected bilinear forms, but 
  in itself the fact that $\fdiag{1}_\sigma^{2d}$ is the isometry class of 
  $T_{\sigma^{\otimes d}}$ is truly by definition of $\SW$.
\end{rem}

\begin{coro}\label{cor_lambda_trace}
  Let $(A,\sigma)$ be an Azumaya algebra with involution of the first kind over $K$.
  The restriction of $T_{\sigma^{\otimes d}}$ to $\RAlt^{2d}(A,\sigma)\subset A^{\otimes d}$
  is isometric to $\fdiag{(2d)!}\lambda^{2d}(\fdiag{1}_\sigma)$. Therefore, if $char(K)\ppq 2d$
  then the restriction of $T_{\sigma^{\otimes d}}$ to $\RAlt^{2d}(A,\sigma)$ is totally isotropic,
  and if $char(K)>2d$ then $\lambda^{2d}(\fdiag{1}_\sigma)$ is isometric to $\fdiag{(2d)!}$ times
  the restriction of $T_{\sigma^{\otimes d}}$ to $\Alt^d(A)\cap TM^{2d}(A,\sigma)$.
\end{coro}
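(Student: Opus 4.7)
The plan is to chain together Proposition \ref{prop_ralt_h}, Corollary \ref{cor_restr_ralt}, and Proposition \ref{prop_ralt_1}, which between them already encode essentially all the content. Specifically, Proposition \ref{prop_ralt_h} tells us that $\lambda^{2d}(\fdiag{1}_\sigma)$ is the isometry class of $(\RAlt^{2d}(A),\RAlt^{2d}(\fdiag{1}_\sigma))$; Corollary \ref{cor_restr_ralt} identifies the restriction of $\fdiag{1}_\sigma^{[2d]}$ to $\RAlt^{2d}(A)\subset A^{[2d]}$ with $\fdiag{(2d)!}\RAlt^{2d}(\fdiag{1}_\sigma)$; and Proposition \ref{prop_ralt_1} identifies, under the canonical isomorphism $A^{[2d,\sigma]}\simeq A^{\otimes d}$ of Lemma \ref{lem_ident_ad}, the bilinear form $\fdiag{1}_\sigma^{[2d]}$ with $T_{\sigma^{\otimes d}}$ and the subspace $\RAlt^{2d,\sigma}(A)\subset A^{[2d,\sigma]}$ with $\RAlt^{2d}(A,\sigma)\subset A^{\otimes d}$. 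Stringing these three facts together gives directly the first assertion.

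For the two consequences in terms of characteristic: if $char(K)\ppq 2d$, then $(2d)!$ vanishes in $K$, so $\fdiag{(2d)!}\lambda^{2d}(\fdiag{1}_\sigma)$ is the zero bilinear form, whence the restriction of $T_{\sigma^{\otimes d}}$ to $\RAlt^{2d}(A,\sigma)$ is identically zero and \emph{a fortiori} totally isotropic. If $char(K)>2d$, then $(2d)!$ is a unit of $K$, so we may rescale by $\fdiag{((2d)!)^{-1}}$; since $((2d)!)^{-1}$ and $(2d)!$ differ by the square $((2d)!)^{-2}$ in $K^\times$, they represent the same class in $K^\times/(K^\times)^2$, so the $1$-dimensional bilinear forms $\fdiag{((2d)!)^{-1}}$ and $\fdiag{(2d)!}$ are isometric. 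Combined with the identification $\RAlt^{2d}(A,\sigma)=\Alt^d(A)\cap TA^{2d}(A,\sigma)$ from the proposition immediately preceding this corollary, this yields the stated description.

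There is no substantial obstacle: the argument is a formal chain of identifications, since the heavy lifting has already been done upstream. The only care required is bookkeeping through the isomorphism of Lemma \ref{lem_ident_ad} to ensure the bilinear forms are transported correctly (in particular that $\RAlt^{2d,\sigma}(A)$ and $\RAlt^{2d}(A,\sigma)$ really do correspond as subspaces), together with the small square-class observation needed in the case $char(K)>2d$.
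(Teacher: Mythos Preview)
Your proposal is correct and follows the same approach as the paper, which simply cites Corollary \ref{cor_restr_ralt} as the key ingredient; you have merely spelled out explicitly the identifications from Proposition \ref{prop_ralt_1} and Proposition \ref{prop_ralt_h} that are implicit in that reference. The square-class observation for the case $char(K)>2d$ is a nice touch that the paper leaves to the reader.
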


\begin{proof}
  This is a direct consequence of Corollary \ref{cor_restr_ralt}.
\end{proof}

\begin{coro}\label{cor_lambda_2}
  Let $(A,\sigma)$ be an Azumaya algebra with involution of the first kind over $K$.
  Then in $SW(K)$:
  \[ \lambda^2(\fdiag{1}_\sigma) = \fdiag{2}T_\sigma^{-\eps(\sigma)}. \]
\end{coro}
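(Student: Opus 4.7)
The plan is to obtain the formula as a direct specialization of Corollary \ref{cor_lambda_trace} to $d=1$. Since we have assumed throughout the paper that $\mathrm{char}(K)\neq 2$, the hypothesis $\mathrm{char}(K) > 2d = 2$ of the second assertion of Corollary \ref{cor_lambda_trace} is automatic, and we can read off that
\[ \lambda^{2}(\fdiag{1}_\sigma) \simeq \fdiag{2!} \cdot \bigl(T_{\sigma} \text{ restricted to } \Alt^1(A) \cap TA^{2}(A,\sigma)\bigr). \]
So the whole work is to identify the restricted form on the right-hand side with $T_\sigma^{-\eps(\sigma)}$.

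First, I would unpack $\Alt^1(A) \cap TA^{2}(A,\sigma)$. By Definition \ref{def_alt}, $\Alt^1(A) = s_1 \cdot A = A$ since $s_1 = 1 \in K[\mathfrak{S}_1]$. By the definition of $TA^{2d}(A,\sigma)$ specialized to $d=1$, one has
\[ TA^{2}(A,\sigma) = \{ x \in A \mid \sigma(x) = -\eps(\sigma)\, x\} = \Sym^{-\eps(\sigma)}(A,\sigma). \]
Hence the intersection is simply the subspace $\Sym^{-\eps(\sigma)}(A,\sigma) \subseteq A$.

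Second, I would match this with the notation of the introduction. The form $T_\sigma^\eta$ is defined there as the restriction of $T_\sigma(x,y) = \Trd_A(\sigma(x)y) = \Trd_A(x\sigma(y))$ to $\Sym^\eta(A,\sigma)$; specializing $\eta = -\eps(\sigma)$ gives exactly the restricted form appearing on the right-hand side above. Plugging in $2! = 2$ therefore yields
\[ \lambda^{2}(\fdiag{1}_\sigma) \simeq \fdiag{2}\, T_\sigma^{-\eps(\sigma)} \]
in $SW(K)$, which is the desired identity.

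There is no real obstacle: the content of the corollary is entirely packaged in Corollary \ref{cor_lambda_trace}, and what remains is just a dictionary check. The only place where one must be a little careful is the sign convention, namely verifying that the elements killed by $\sigma + \eps(\sigma)\,\mathrm{Id}$ are indeed what the introduction calls the $(-\eps(\sigma))$-symmetric subspace, but this is immediate from the definition of $\Sym^\eta$.
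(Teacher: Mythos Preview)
Your proof is correct and is exactly the intended argument: the paper states Corollary~\ref{cor_lambda_2} without proof precisely because it is the $d=1$ specialization of Corollary~\ref{cor_lambda_trace}, and your unpacking of $\Alt^1(A)\cap TA^2(A,\sigma)=\Sym^{-\eps(\sigma)}(A,\sigma)$ is the only thing there is to check. One cosmetic remark: writing ``$\mathrm{char}(K)>2$'' is slightly awkward in characteristic~$0$; it would be cleaner to invoke the first (unconditional) sentence of Corollary~\ref{cor_lambda_trace} and simply note that $\fdiag{2}$ is invertible since $\mathrm{char}(K)\neq 2$.
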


\begin{rem}\label{rem_ralt_even_1}
  In fact, as we observed in Remark \ref{rem_lambda_fonct}, for any $(V,h)$ over
  $(A,\sigma)$, if $B=\End_A(V)$ and $\tau=\sigma_h$ is the adjoint involution, then
  $\lambda^{2d}(h) = \lambda^{2d}(\fdiag{1}_\tau)$, so Proposition \ref{prop_ralt_1}
  is enough to compute any even $\lambda$-power. We can even be more explicit:
  $B^{\otimes d}\simeq \End_{A^{\otimes d}}(V^{\otimes d})$, and the standard identification
  $V^{\otimes d}\otimes_{A^{\otimes d}} \mbox{}^{\sigma^{\otimes d}}V^{\otimes d}\simeq B^{\otimes d}$
  given in \cite[§5.A]{BOI} gives an identification between $V^{[2d]}$ and $B^{\otimes d}$,
  which identifies $\RAlt^{2d}(h)$ and $\RAlt^{2d}(\fdiag{1}_\tau)$.

  In particular, for any $h$, we can always realize $\lambda^{2d}(h)$ as a scaled subform
  of some involution trace form, as long as the characteristic of the field is strictly
  superior to $2d$.
\end{rem}

\begin{rem}
  Our description of $\RAlt^2(\fdiag{1}_\sigma)$ in Proposition \ref{prop_ralt_1}
  yields the following alternative description of $\lambda^2(\fdiag{1}_\sigma)$.
  If $\sigma$ is orthogonal, $\RAlt^2(A,\sigma)$ is the space of alternating elements
  of $A$ in the sense of \cite[§2.A]{BOI}, and $\RAlt^2(\fdiag{1}_\sigma)$ is 
  \[ (x-\sigma(x),y-\sigma(y)) \mapsto \Trd_A(x(y-\sigma(y))). \]
  If $\sigma$ is symplectic, $\RAlt^2(A,\sigma)$ is the space of symmetrized elements
  as in \cite[§2.A]{BOI}, and $\RAlt^2(\fdiag{1}_\sigma)$ is 
  \[ (x+\sigma(x),y+\sigma(y)) \mapsto \Trd_A(x(y+\sigma(y))). \]
  Those are the forms described in \cite[Exercise 2.15]{BOI}, which also
  make sense in characteristic $2$.
\end{rem}

\subsection{Reduced alternating powers of odd degree}

We now do a similar construction for odd $\lambda$-powers, heavily
relying on the even case. We give less details as we are mainly interested
in even $\lambda$-powers in applications.

If $V$ is a right $A$-module and $d\in \N$, we define
\begin{equation}
  V^{[2d+1,\sigma]} = V^{[2d,\sigma]}\otimes_K V.
\end{equation}

As before, we usually drop the $\sigma$ from the notation.
If $B=\End_A(V)$, we know $V^{[2d]}$ is a left $B^{\otimes 2d}$-module,
so $V^{[2d+1]}$ is naturally a $B^{\otimes 2d+1}$-$A$-bimodule.
In particular, it is a left $K[\mathfrak{S}_{2d+1}]$-module,
and we may define 
\begin{equation}
  \RAlt^{2d+1,\sigma}(V) = s_{2d+1}V^{[2d+1,\sigma]}\subset V^{[2d+1,\sigma]}.
\end{equation}

When $V$ carries an $\eps$-hermitian form we may define an 
$\eps$-hermitian form $h^{[2d+1]}$ on $V^{[2d+1]}$ by 
\begin{equation}
  h^{[2d+1]} = h^{[2d]}\otimes h
\end{equation}
and an $\eps$-hermitian form $\RAlt^{2d+1}(h)$ on $\RAlt^{2d+1}(V)$ by
\begin{equation}
  \RAlt^{2d+1}(h)(s_{2d+1}x,s_{2d+1}y) = h^{[2d+1]}(s_{2d+1}x,y).
\end{equation}

\begin{prop}
  Let $(A,\sigma)$ be an Azumaya algebra with involution of the first kind
  over $K$, and let $(V,h)$ be an $\eps$-hermitian module over $(A,\sigma)$.
  Then the isometry class of $(V^{[2d+1]},h^{[2d+1]})$ is the $(2d+1)$th
  power of $(V,h)$ in $\SW$, and $(\Lambda^{2d+1}(V),\lambda^{2d+1}(h))$ is
  the isometry class of $(\RAlt^{2d+1}(V), \RAlt^{2d+1}(h))$.
\end{prop}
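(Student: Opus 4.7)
The plan is to reduce both claims to the even case, exploiting the fact that by definition $V^{[2d+1]} = V^{[2d]} \otimes_K V$ simply appends one extra factor. Everything hinges on identifying both $(V^{[2d+1]}, h^{[2d+1]})$ and $(\RAlt^{2d+1}(V), \RAlt^{2d+1}(h))$ as the images of $(V^{\otimes(2d+1)}, h^{\otimes(2d+1)})$ and $(\Alt^{2d+1}(V), \Alt^{2d+1}(h))$ under a single Morita equivalence, and then invoking how the contraction $\SWZ \twoheadrightarrow \SW$ is constructed.

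For part (1), the relevant Morita equivalence from $(A^{\otimes(2d+1)}, \sigma^{\otimes(2d+1)})$ to $(A, \sigma)$ is obtained by tensoring $(|A^{\otimes d}|_{\sigma^{\otimes d}}, T_{\sigma^{\otimes d}})$ (which realizes the isomorphism $(A^{\otimes 2d}, \sigma^{\otimes 2d}) \to (K, \Id)$ in $\CBrh$) with the identity equivalence $(A, \fdiag{1}_\sigma)$ on the last factor. Lemma \ref{lem_isom_red_tens_pow} identifies $V^{[2d]}$ with the transfer of $V^{\otimes 2d}$ through $|A^{\otimes d}|_{\sigma^{\otimes d}}$, and Proposition \ref{prop_red_power_herm} does the same for the hermitian forms. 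Tensoring on the right with $(V, h)$ (which is unchanged by the identity component of the equivalence) yields at once that $(V^{[2d+1]}, h^{[2d+1]})$ is the image of $(V^{\otimes(2d+1)}, h^{\otimes(2d+1)})$ under this Morita equivalence. In $\SW$ the $(2d+1)$th power of the class of $(V,h)$ is computed by taking $(V^{\otimes(2d+1)}, h^{\otimes(2d+1)})$ in $SW^{\eps}(A^{\otimes(2d+1)}, \sigma^{\otimes(2d+1)})$ and contracting it to $SW^{\eps}(A, \sigma)$, which is exactly this Morita transfer.

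For part (2), I would apply $s_{2d+1}$ to both sides of the isomorphism constructed above. The Goldman action of $K[\mathfrak{S}_{2d+1}]$ on $V^{\otimes(2d+1)}$ factors through $B^{\otimes(2d+1)} = \End_{A^{\otimes(2d+1)}}(V^{\otimes(2d+1)})$, and since the Morita equivalence is an equivalence of left $B^{\otimes(2d+1)}$-module categories, the induced action on $V^{[2d+1]}$ agrees with the one described in the statement of the proposition (via $V^{[2d+1]} = V^{[2d]} \otimes_K V$ as a left $B^{\otimes(2d+1)}$-module). Hence the subspace $\RAlt^{2d+1}(V) = s_{2d+1} V^{[2d+1]}$ corresponds under the Morita equivalence to $\Alt^{2d+1}(V) = s_{2d+1} V^{\otimes(2d+1)}$. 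For the hermitian form, since $\RAlt^{2d+1}(h)$ is defined from $h^{[2d+1]}$ by the same formula $(s_{2d+1}x, s_{2d+1}y) \mapsto h^{[2d+1]}(s_{2d+1}x, y)$ as $\Alt^{2d+1}(h)$ is defined from $h^{\otimes(2d+1)}$, and the Morita equivalence sends one ambient form to the other, it sends $\Alt^{2d+1}(h)$ to $\RAlt^{2d+1}(h)$. By definition of $\lambda^{2d+1}(h)$ in $\SW$, this gives the claim.

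The main obstacle is the bookkeeping of the compatibility between the Morita equivalence, which acts only on the first $2d$ tensor factors, and the full symmetric group action $s_{2d+1}$ permuting all $2d+1$ factors. However, this is not a real conceptual difficulty: the action is built so as to factor through the endomorphism algebra of the module, and Morita equivalence by construction preserves the endomorphism algebra, so the actions match automatically. The rest is essentially the same compatibility argument as in the proofs of Proposition \ref{prop_ralt_h} and Lemma \ref{lem_isom_red_tens_pow}, and requires no new idea once the parallel with the even case is set up.
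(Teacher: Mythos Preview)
Your proposal is correct and follows essentially the same approach as the paper: the Morita equivalence $(|A^{\otimes d}|_{\sigma^{\otimes d}}, T_{\sigma^{\otimes d}}) \otimes_K (A, \fdiag{1}_\sigma)$ is used to identify $(V^{[2d+1]}, h^{[2d+1]})$ with the transfer of $(V^{\otimes(2d+1)}, h^{\otimes(2d+1)})$ via Proposition~\ref{prop_red_power_herm} on the first $2d$ factors, and the second claim is then deduced from the first exactly as in Proposition~\ref{prop_ralt_h}. Your discussion of the $s_{2d+1}$-compatibility is in fact more explicit than what the paper writes, which simply refers back to the proof of Proposition~\ref{prop_ralt_h}.
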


\begin{proof}
  By definition of the structure of $\SW$, $(V,h)^{2d+1}$ is the composition
  of $(V^{\otimes 2d+1},h^{\otimes 2d+1})$ and 
  $(|A^{\otimes d}|_{\sigma^{\otimes d}}, T_{\sigma^{\otimes d}})\otimes_K (|A|,\fdiag{1}_\sigma)$.
  This composition is the tensor power of 
  \[ (|A^{\otimes d}|_{\sigma^{\otimes d}}, T_{\sigma^{\otimes d}})\circ (V^{\otimes 2d},h^{\otimes 2d}) \]   
  which we know to be $(V^{[2d]},h^{[2d]})$ from Proposition \ref{prop_red_power_herm}
  and of 
  \[ (A,\fdiag{1}_\sigma)\circ (V,h) \]   
  which is canonically $(V,h)$. So $(V,h)^{2d+1}$ is the class of $(V^{[2d+1]},h^{[2d+1]})$.

  The statement regarding $\lambda^{2d+1}(h)$ follows, using the connexion between $h^{\otimes 2d+1}$
  and $\Alt^{2d+1}(h)$, exactly as in the proof of Proposition \ref{prop_ralt_h}.
\end{proof}

\begin{rem}
  Unlike the case of even $\lambda$-powers (see Remark \ref{rem_ralt_even_1}), we cannot compute 
  $\lambda^{2d+1}(h)$ simply as $\lambda^{2d+1}(\fdiag{1}_\tau)$ where $\tau$ is the 
  adjoint involution of $h$. Rather, $\lambda^{2d+1}(h)\in SW^\eps(A,\sigma)$ is obtained from 
  $\lambda^{2d+1}(\fdiag{1}_\tau)\in SW(B,\tau)$ using the Morita equivalence from
  $(B,\tau)$ to $(A,\sigma)$ given precisely by $(V,h)$.
\end{rem}

\section{The determinant of an involution}

The determinant being one of the most basic and useful invariants
of quadratic forms, it makes sense that one would like to extend it
to algebras with involutions and hermitian forms. 

Let $(A,\sigma)$ be an Azumaya algebra with involution over $(K,\iota)$,
and let $(V,h)$ be an $\eps$-hermitian module over $(A,\sigma)$,
of reduced dimension $n\in \N$.Then applying Proposition \ref{prop_def_det} to $\SWN$, 
we may define $\det(h) = \lambda^n(h) \in \ell(\SWN)$.
In particular, we define the determinant of $(A,\sigma)$ (or just of $\sigma$) as
\begin{equation}
  \det(A,\sigma) = \det(\sigma) = \det(\fdiag{1}_\sigma)\in \ell(\SWN).
\end{equation}

Now $\ell(\SWN)$ is a submonoid of $\SWN^\times$, with a natural morphism
$\partial: \ell(\SWN)\to \Gamma_\N$, which we compose with the natural
projection $\Gamma_\N\to \N$ to get $\partial': \ell(\SWN)\to \N$.
It is clear that $SW^\bullet(A^{\otimes d},\sigma^{\otimes d})$ contains 
elements of reduced dimension $1$ only when $A^{\otimes d}$ is split,
and in that case a choice of Morita equivalence yields a (choice-dependent)
identification between the set of those line elements with $\ell(SW^\bullet(K,\iota))$.
Let $e\in \N$ be the exponent of $A$. Then the image of $\partial': \ell(\SWN)\to \N$
is $e\N$, and if $m\in \N$ the fiber above $em$ is in non-canonical
bijection with $\ell(SW^\bullet(K,\iota))\simeq K^\times/N_{K/k}(K^\times)$.
Actually, we see that any choice of Morita equivalence between 
$(A^{\otimes d},\sigma^{\otimes d})$ and $(K,\iota)$ yields a 
monoid isomorphism between $(\partial')^{-1}(d\N)$ and $d\N\times K^\times/N_{K/k}(K^\times)$,
so in particular a choice of equivalence between 
$(A^{\otimes e},\sigma^{\otimes e})$ and $(K,\iota)$ gives
\[ \ell(\SWN) \approx e\N\times K^\times/N_{K/k}(K^\times). \]  

Then $\det(h)$ is in the fiber above $n\in e\N$, but this only
non-canonically identifies $\det(h)$ with a class in $K^\times/N_{K/k}(K^\times)$.
When $n$ is a multiple of $r=\deg(A)$ (so when $h$ is isometric to a diagonal
form, unless $(A,\sigma)=(K,\Id)$ and $h$ is anti-symmetric), 
we can do a little better. Indeed, there is a canonical equivalence between
$(A^{\otimes d},\sigma^{\otimes d})$ and $(K,\iota)$, given by $(\Alt^r(A),\Alt^r(\fdiag{1}_\sigma))$.
This defines a canonical isomorphism
\[ \ell(\SWN)\supset (\partial')^{-1}(r\N) \simeq r\N\times K^\times/N_{K/k}(K^\times) \]   
which sends $\det(\sigma)$ to $(r,1)$. In other words, we are saying that
any element of $(\partial')^{-1}(rm)$ for some $m\in \N$ (and in particular
$\det(h)$ if $n=rm$) has the form $\lambda\cdot \det(\sigma)^m$ for a unique
class $\lambda\in K^\times/N_{K/k}(K^\times)\simeq \ell(SW^\bullet(K,\iota))$. 
We can be more explicit when a diagonalization of $h$ is given.

\begin{lem}
  Let $A$ be an Azumaya algebra over $K$, of degree $n$. Then for any 
  $a\in A^\times$, we have $s_n a^{\otimes n} = \Nrd_A(a)s_n$.
\end{lem}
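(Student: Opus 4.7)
The plan is to reduce to the split case by scalar extension, and then use the standard interpretation of the antisymmetrizer as a projector onto the top exterior power.

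First I would argue that it suffices to check the equality $s_n a^{\otimes n} = \Nrd_A(a)\,s_n$ in $A^{\otimes n}$ after base change to a splitting field $L/K$ of $A$. Indeed, both the canonical morphism $K[\mathfrak{S}_n] \to A^{\otimes n}$ (coming from the Goldman element) and the reduced norm are compatible with scalar extension, as noted just after Definition~\ref{def_alt}. Since $K \to L$ is faithfully flat, equality in $A^{\otimes n} \otimes_K L = A_L^{\otimes_L n}$ implies equality in $A^{\otimes n}$.

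So assume $A = \End_K(W)$ for some $n$-dimensional $K$-vector space $W$. Then $A^{\otimes n} = \End_K(W^{\otimes n})$, and by the computation in the proof of Proposition~\ref{prop_goldman_commut}, the Goldman morphism $K[\mathfrak{S}_n] \to A^{\otimes n}$ identifies each $g \in \mathfrak{S}_n$ with the permutation endomorphism of $W^{\otimes n}$ (this is exactly the statement that in the split case $W$ the left Goldman action is the usual permutation action). Under this identification, $s_n$ acts on $W^{\otimes n}$ as the standard antisymmetrizer, and $a^{\otimes n}$ acts as $a \otimes \cdots \otimes a$.

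Now I would apply the two operators in succession to a pure tensor: for any $v_1,\dots,v_n \in W$,
\[ (s_n \circ a^{\otimes n})(v_1 \otimes \cdots \otimes v_n) = s_n(av_1 \otimes \cdots \otimes av_n). \]
Via the canonical identification $\Ima(s_n) \simeq \Lambda^n(W)$ from Lemma~\ref{lem_ker_im_sd}, this is $av_1 \wedge \cdots \wedge av_n = \det(a)\cdot v_1 \wedge \cdots \wedge v_n$, because $\Lambda^n(W)$ is one-dimensional and $\Lambda^n(a)$ acts on it by $\det(a)$. The right-hand side rewrites as $\det(a) \cdot s_n(v_1 \otimes \cdots \otimes v_n)$. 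As pure tensors span $W^{\otimes n}$, we get the equality $s_n \circ a^{\otimes n} = \det(a)\cdot s_n$ of endomorphisms of $W^{\otimes n}$, i.e.\ of elements of $A^{\otimes n}$. Since $A$ is split of degree $n$, $\det(a) = \Nrd_A(a)$, which concludes.

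The only potentially delicate point is the bookkeeping about whether the left Goldman action is the permutation action or its inverse, but since $s_n$ is invariant under $g \mapsto g^{-1}$ (as used in Corollary~\ref{cor_sd_sym}), this convention issue does not affect the identity. Otherwise the argument is routine once the split case is reached.
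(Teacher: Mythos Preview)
Your proof is correct and follows essentially the same route as the paper's: reduce by scalar extension to the split case $A=\End_K(W)$, identify the image of $s_n$ in $A^{\otimes n}=\End_K(W^{\otimes n})$ with the antisymmetrizer, and read off the identity from the standard fact that $\Lambda^n(a)$ acts by $\det(a)=\Nrd_A(a)$ on the one-dimensional top exterior power. Your write-up is simply more explicit than the paper's.
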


\begin{proof}
  The equality can be checked after scalar extension, so
  it is enough to prove this when $A$ is split. In that case $A\simeq \End_K(U)$
  with $\dim(U)=n$,
  $a$ corresponds to some endomorphism $f:U\to U$, and the formula amounts to
  $f(u_1)\wedge\cdots\wedge f(u_n) = \det(f)(u_1\wedge\cdots\wedge u_n)$ for
  all $u_1,\dots,u_n\in U$.
\end{proof}

\begin{prop}
  Let $(A,\sigma)$ be an Azumaya algebra with involution over $(K,\iota)$,
  and let $a_1,\dots,a_m\in \Sym^{\eps}(A^\times,\sigma)$ for some $\eps\in U(K,\iota)$.
  Then 
  \[ \det(\fdiag{a_1,\dots,a_m}_\sigma)= \fdiag{\prod_{i=1}^m\Nrd_A(a_i)}_\iota\det(\sigma)^m. \]
\end{prop}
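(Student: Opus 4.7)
The plan is to reduce to the case $m=1$ and then compute explicitly. By Proposition \ref{prop_def_det}, the determinant $\det\colon \SWN \to G(\ell(\SWN))$ is a monoid morphism with respect to the additive structure of $\SWN$. Since the diagonal form $\fdiag{a_1,\dots,a_m}_\sigma$ is by construction the orthogonal sum $\sum_i \fdiag{a_i}_\sigma$ in $\SWN$, multiplicativity gives
\[
  \det(\fdiag{a_1,\dots,a_m}_\sigma) \;=\; \prod_{i=1}^m \det(\fdiag{a_i}_\sigma),
\]
so the proposition reduces to proving, for each $a\in \Sym^\eps(A^\times,\sigma)$, the formula
$\det(\fdiag{a}_\sigma) = \fdiag{\Nrd_A(a)}_\iota\cdot \det(\sigma)$.

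Now, since $\rdim_A(A) = \deg(A) = n$, Proposition \ref{prop_dim_alt} tells us that $\fdiag{a}_\sigma$ has $\lambda$-dimension $n$, so $\det(\fdiag{a}_\sigma) = \Alt^n(\fdiag{a}_\sigma)$, which is carried by the module $\Alt^n(A) = s_n A^{\otimes n}$. I would compute the underlying bilinear form using the second equality in Definition \ref{def_alt_h}:
\[
  \Alt^n(\fdiag{a}_\sigma)(s_n x, s_n y) = \fdiag{a}_\sigma^{\otimes n}(x, s_n y) = \sigma^{\otimes n}(x)\,a^{\otimes n} s_n\, y.
\]
The key step is to move $a^{\otimes n}$ past $s_n$. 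The lemma above the proposition provides $s_n a^{\otimes n} = \Nrd_A(a) s_n$, but we need the other ordering. I would obtain it by applying $\sigma^{\otimes n}$ to that identity: using $\sigma^{\otimes n}(s_n) = s_n$ (Corollary \ref{cor_sd_sym}) and $\sigma(a) = \eps a$, so that $\sigma^{\otimes n}(a^{\otimes n}) = \eps^n a^{\otimes n}$, and using that $\sigma^{\otimes n}$ reverses order, we get $\eps^n a^{\otimes n} s_n = \iota(\Nrd_A(a))\, s_n$. But $\Nrd_A(\sigma(a)) = \iota(\Nrd_A(a))$ and $\Nrd_A(\eps a) = \eps^n \Nrd_A(a)$, so $\iota(\Nrd_A(a)) = \eps^n \Nrd_A(a)$, which simplifies to $a^{\otimes n} s_n = \Nrd_A(a)\, s_n$.

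Plugging this back in, and using that $\Nrd_A(a)\in K^\times$ is central in $A^{\otimes n}$,
\[
  \Alt^n(\fdiag{a}_\sigma)(s_n x, s_n y) = \Nrd_A(a)\,\sigma^{\otimes n}(x)\, s_n y = \Nrd_A(a)\,\Alt^n(\fdiag{1}_\sigma)(s_n x, s_n y),
\]
which identifies $\Alt^n(\fdiag{a}_\sigma)$ with the product $\fdiag{\Nrd_A(a)}_\iota \cdot \Alt^n(\fdiag{1}_\sigma)$ in $\SWN$. The sign matches: the relation $\iota(\Nrd_A(a)) = \eps^n \Nrd_A(a)$ shows $\fdiag{\Nrd_A(a)}_\iota\in SW^{\eps^n}(K,\iota)$, and $\Alt^n(\fdiag{1}_\sigma)\in SW(A^{\otimes n},\sigma^{\otimes n})$, so the product indeed lies in the correct component $SW^{\eps^n}(A^{\otimes n},\sigma^{\otimes n})$ where $\det(\fdiag{a}_\sigma)$ lives. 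Combining with the reduction of the first paragraph yields the formula stated. The only real obstacle is the computation $a^{\otimes n} s_n = \Nrd_A(a) s_n$, whose derivation from the preceding lemma requires the involutive bookkeeping above, but everything else is formal.
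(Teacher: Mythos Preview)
Your proof is correct and follows essentially the same approach as the paper: reduce to $m=1$ by multiplicativity of $\det$, then compute $\Alt^n(\fdiag{a}_\sigma)$ directly from the definition and the lemma $s_n a^{\otimes n} = \Nrd_A(a) s_n$. The only difference is that the paper uses the \emph{first} equality in Definition~\ref{def_alt_h}, namely $\Alt^n(h)(s_nx,s_ny)=h^{\otimes n}(s_nx,y)$, which gives $\sigma^{\otimes n}(x)\,s_n a^{\otimes n}\,y$ and so applies the lemma directly without your involutive detour to pass from $s_n a^{\otimes n}$ to $a^{\otimes n} s_n$.
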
  

\begin{proof}
  Since $\det(h+h')=\det(h)\det(h')$, we can easily reduce to $m=1$, and show
  $\Alt^n(\fdiag{a}_\sigma) = \fdiag{\Nrd_A(a)}_\iota \Alt^n(\fdiag{1}_\sigma)$.
  Let $x,y\in A^{\otimes n}$. Then 
  \begin{align*}
    \Alt^n(\fdiag{a}_\sigma)(s_nx,s_ny) &= \fdiag{a}_\sigma^{\otimes n}(s_nx,y) \\
    &= \sigma^{\otimes n}(s_nx)a^{\otimes n}y \\
    &= \sigma^{\otimes n}(x)s_na^{\otimes n}y \\
    &= \Nrd_A(a)\sigma^{\otimes n}(x)s_ny \\
    &= \Nrd_A(a)\fdiag{1}_\sigma^{\otimes n}(x,s_ny) \\
    &= \Nrd_A(a)\Alt^n(\fdiag{1}_\sigma)(s_nx,s_ny). \qedhere
  \end{align*}
\end{proof}

Tu summarize, $\det(h)$ is canonically an element of $\ell(\SWN)$,
can be identified with an element of $K^\times/N_{K/k}(K^\times)$
only given a choice of Morita equivalence, but if $\rdim(h)=m\deg(A)$
we can relate $\det(h)$ and $\det(\sigma)^m$ by a class in $K^\times/N_{K/k}(K^\times)$.
\\

As usual, when $\iota=\Id_k$, it is much more comfortable to work in $\SW$,
and define $\det(h)$ and in particular $\det(\sigma)=\det(\fdiag{1}_\sigma)$
as an element of $\ell(\SW)$ (which is a group).

When $A$ is not split, we easily see that $\ell(\SW)\simeq K^\times/(K^\times)^2$
since there are no line elements in $SW^\eps(A,\sigma)$. When $A$ is split, 
the morphism $\ell(\SW)\xrightarrow{\partial} \Gamma\to \Zd$ induces a canonical 
exact sequence
\begin{equation}
  1\to K^\times/(K^\times)^2 \to \ell(\SW)\to \Zd \to 0
\end{equation}
which is split, but non-canonically so. Indeed, any choice of
Morita equivalence between $(A,\sigma)$ and $(K,\Id)$ defines an
isomorphism 
\[ \ell(\SW)\approx \ell(\SW[K,\Id])\simeq \Zd\times K^\times/(K^\times)^2. \]
Any two such choices of equivalences differ by the multiplication by some
$\fdiag{\lambda}$ with $\lambda\in K^\times$, which induces the automorphism
of $\ell(\SW[K,\Id])$ which corresponds to $([i],[a])\mapsto ([i],[\lambda^i a])$
with $[i]\in \Zd$ and $[a]\in K^\times/(K^\times)^2$ (in particular, it is
the identity on the "even component" of $\ell(\SW[K,\Id])$).

When $\rdim(h)$ is even, $\det(h)$ is then canonically identified with
a class in $K^\times/(K^\times)^2$. This applies in particular to $\det(\sigma)$
when $\deg(A)$ is even. 

When $\rdim(h)$ is odd, necessarily $A$ is split, and $\det(h)$ is in 
the odd component of $\ell(\SW)$ which is non-canonically identified with
$K^\times/(K^\times)^2$. This applies to $\det(\sigma)$ when $\deg(A)$
is odd; in that case, for any $h$ with odd reduced dimension, we can
write $\det(h)=\fdiag{\lambda}\det(\sigma)$ and the square class of
$\lambda$ is uniquely determined.
\\

Still when $\iota=\Id_k$, let us compare these observations with the
reference $\cite{BOI}$. In there $\det(\sigma)\in K^\times/(K^\times)^2$ 
is defined only when $\sigma$ is orthognal and $\deg(A)$ is even, 
and in that case it coincides with our definition. Indeed, we can
find a splitting extension $L/K$ of $A$ such that 
$K^\times/(K^\times)^2\to L^\times/(L^\times)^2$ is injective
(for instance we can take $L$ to be the function field of the Severi-Brauer
variety of $A$). Then we can choose a Morita equivalence $(V,b)$ from
$(A_L,\sigma_L)$ to $(L,\Id)$, and $\det(\sigma_L)=\det(b)$ both
for our definition and that of \cite{BOI}, and by injectivity
this shows that both $\det(\sigma)$ are equal. The key point of course
is that this is independent of the choice of $b$, because any other
choice is isometric to $\fdiag{\lambda}b$ for some $\lambda\in L^\times$,
and $\det(\fdiag{\lambda}b)=\det(b)$ because $\dim(V)=\deg(A)$ is even.
With our point of view, we can say that $\det(\sigma_L)$ is in the
even component of $\ell(\SW[A_L,\sigma_L])$, so its image in 
$\ell(\SW[L,\Id])$ does not depend on the choice of Morita equivalence.

When $\sigma$ is orthogonal but $\deg(A)$ is odd, \cite{BOI} does 
not define $\det(\sigma)$, because the previous trick does not work:
$\det(\fdiag{\lambda}b)=\fdiag{\lambda}\det(b)$ because $\dim(V)$
is odd, so this class does depend on the choice of $b$. In our language,
$\det(\sigma_L)$ is in the odd component of $\ell(\SW[A_L,\sigma_L])$,
so its image in $\ell(\SW[L,\Id])$ does depend on the choice of Morita equivalence.
Our definition still provides a meaning for $\det(\sigma)$ in this case,
but it is not a square class. It can be related to one by choosing an
equivalence between $(A,\sigma)$ and $(K,\Id)$, but the induced isomorphism
$\ell(\SW)\approx \Zd\times K^\times/(K^\times)^2$ does depend
on this choice (at least on the odd component). In general,
if $(V,h)$ defines a Morita equivalence between $(B,\tau)$ and 
$(A,\sigma)$, the induced isomorphism $\ell(\SW[B,\tau])\isom \ell(\SW)$
sends $\det(\tau)$ to $\det(\sigma)$, but the group itself varies.

For a symplectic involution, the situation is simpler: $\det(\sigma)$
is simply trivial (this can be seen using the splitting trick above for instance).
This is not suprising since there is no non-trivial cohomological invariant
of degree $1$ for symplectic involutions (basically because symplectic groups
are connected). Instead one may find in the literature definitions of 
a "determinant" of symplectic involutions as cohomological invariants
of degree 3, see \cite{BMT} and \cite{GPT}.

\bibliographystyle{plain}
\bibliography{lambda_hermitian}

\end{document}